\documentclass[10pt,reqno]{article}

\usepackage[b5paper,top=1.2in,left=0.9in]{geometry}
\usepackage{verbatim}
\usepackage{amssymb}
\usepackage{amsmath}
\usepackage{graphicx}
\usepackage{appendix}
\usepackage{color}
\usepackage{amsthm}
\usepackage{tikz}
\usetikzlibrary{arrows,positioning,decorations.pathmorphing,  decorations.markings}

\renewcommand{\d}{\mathrm{d}}
\newcommand{\D}{\mathrm{D}}
\newcommand{\e}{\mathrm{e}}

\newtheorem{Thm}{Theorem}[section]
\newtheorem{Lem}[Thm]{Lemma}
\newtheorem{Prop}[Thm]{Proposition}
\newtheorem{Cor}[Thm]{Corollary}
\newtheorem{Rem}[Thm]{Remark}
\newtheorem{Def}[Thm]{Definition}

\newtheorem{Ex}[Thm]{Example}

\newtheorem*{MainThm}{Main Theorem}

\newtheoremstyle{named}{}{}{\itshape}{}{\bfseries}{.}{.5em}{#1 #3}
\theoremstyle{named}

\def\R{\mathbb{R}}
\def\Q{\mathbb{Q}}

\def\C{\mathbb{C}}
\def\Z{\mathbb{Z}}

\def\fb{\mathfrak{b}}
\def\sl{\mathfrak{sl}}
\def\g{\mathfrak{g}}

\def\fh{\mathfrak{h}}

\def\cA{\mathcal{A}}
\def\cB{\mathcal{B}}

\def\cD{\mathcal{D}}

\def\cH{\mathcal{H}}

\def\cP{\mathcal{P}}

\def\cU{\mathcal{U}}

\def\a{\alpha}
\def\b{\beta}
\def\e{\epsilon}

\def\c{\gamma}
\def\D{\Delta}

\def\d{\delta}

\def\l{\lambda}

\def\s{\sigma}
\def\t{\tau}
\def\W{\Omega}

\def\ze{\zeta}

\def\bb{\textbf{b}}
\def\bc{\textbf{c}}

\def\be{\textbf{e}}

\def\bf{\textbf{f}}

\def\bH{\textbf{H}}
\def\bi{\textbf{i}}

\def\bQ{\textbf{Q}}

\def\bU{\textbf{U}}

\def\bW{\textbf{W}}
\def\zero{\textbf{0}}

\def\=>{\Longrightarrow}

\def\corr{\longleftrightarrow}
\def\iff{\Longleftrightarrow}

\def\to{\longrightarrow}

\def\ox{\otimes}
\def\o+{\oplus}
\def\bo+{\bigoplus}

\def\<{\langle}
\def\>{\rangle}
\def\({\left(}
\def\){\right)}
\def\oo{\infty}

\def\^{\wedge}
\def\+{\dagger}

\def\inv{^{-1}}
\def\half{\frac{1}{2}}

\def\dd[#1,#2]{\frac{d#1}{d#2}}
\def\del[#1,#2]{\frac{\partial #1}{\partial #2}}
\def\over[#1]{\overline{#1}}
\def\vec[#1]{\overrightarrow{#1}}

\def\tab{\;\;\;\;\;\;}

\newcommand{\til}[1]{\widetilde{#1}}
\newcommand{\what}[1]{\widehat{#1}}

\newcommand{\veca}[2][cccccccccccccccccccccccccccccccccccccccccc]{\left(\begin{array}{#1}#2 \\ \end{array} \right)}

\newcommand{\case}[2][cccccccccccccccccccccccccccccccccccccccccc]{\left\{\begin{array}{#1}#2 \\ \end{array}\right.}
\newcommand{\Eq}[1]{\begin{align}#1\end{align}}
\newcommand{\Eqn}[1]{\begin{align*}#1\end{align*}}

\begin{document}
\title{Positive Representations of Split Real Quantum Groups: The Universal $R$ Operator}

\author{  Ivan C.H. Ip\footnote{
         	Kavli Institute for the Physics and Mathematics of the Universe (WPI), 
		The University of Tokyo, 
		Kashiwa, Chiba 
		277-8583, Japan
		\newline
		Email: ivan.ip@ipmu.jp
          }
}

\date{\today}

\numberwithin{equation}{section}

\maketitle

\begin{abstract}
The universal $R$ operator for the positive representations of split real quantum groups is computed, generalizing the formula of compact quantum groups $\cU_q(\g)$ by Kirillov-Reshetikhin and Levendorski\u{\i}-Soibelman, and the formula in the case of $\cU_{q\til{q}}(\sl(2,\R))$ by Faddeev, Kashaev and Bytsko-Teschner. Several new functional relations of the quantum dilogarithm are obtained, generalizing the quantum exponential relations and the pentagon relations. The quantum Weyl element and Lusztig's isomorphism in the positive setting are also studied in detail.  Finally we introduce a $C^*$-algebraic version of the split real quantum group in the language of multiplier Hopf algebras, and consequently the definition of $R$ is made rigorous as the canonical element of the Drinfeld's double $\bU$ of certain multiplier Hopf algebra $\bU\bb$. Moreover a ribbon structure is introduced for an extension of $\bU$.
\end{abstract}

{\small {\textbf{Keywords.} Universal R matrix, quantum dilogarithm, multiplier Hopf algebra, ribbon Hopf algebra, positive representations, split real quantum groups}}

\newpage
\tableofcontents

\section{Introduction}\label{sec:intro}

In this paper, we construct the universal $R$ operator for the positive representations of split real quantum groups $\cU_{q\til{q}}(\g_\R)$, generalizing the formula of the $R$-operator in the case of $\cU_{q\til{q}}(\sl(2,\R))$ by Faddeev \cite{Fa2}, Kashaev \cite{Ka} and Bytsko-Teschner \cite{BT}, as well as the universal $R$-matrix computed independently by Kirillov-Reshetikhin \cite{KR} and Levendorski\u{\i}-Soibelman \cite{LS} for compact quantum group $\cU_q(\g)$ associated to some Lie algebra $\g$ of all type. 

The notion of the \emph{positive principal series representations}, or simply \emph{positive representations}, was introduced in \cite{FI} as a new research program devoted to the representation theory of split real quantum groups. It uses the concept of modular double for quantum groups \cite{Fa1, Fa2}, and has been studied for $U_{q\til{q}}(\sl(2,\R))$ by Teschner \textit{et al.} \cite{BT, PT1, PT2}. Explicit construction of the positive representations $\cP_\l$ has been obtained for the simply-laced case in \cite{Ip2} and non-simply-laced case in \cite{Ip3}, where the generators of the quantum groups are realized by positive essentially self-adjoint operators. Furthermore, the so-called \emph{transcendental relations} of the (rescaled) generators:
\Eq{\til{\be_i}=\be_i^{\frac{1}{b_i^2}},\tab \til{\bf_i}=\bf_i^{\frac{1}{b_i^2}}, \tab \til{K_i}=K_i^{\frac{1}{b_i^2}}}
 gives the self-duality between different parts of the modular double, while in the non-simply-laced case, new explicit analytic relations between the quantum group and its Langland's dual have been observed \cite{Ip3}.

Motivated by the detailed study in the case of $\cU_{q\til{q}}(\sl(2,\R))$ by Teschner \textit{et al.}, a natural problem is to find the universal $R$-matrix so that it gives a braiding of the positive representations $\cP_{\l}$ of the split real quantum groups $\cU_{q\til{q}}(\g_\R)$. In this infinite dimensional setting, we do not expect any ``matrix" anymore, but rather a natural setting will be realizing $R$ as a \emph{unitary operator} acting on $\cP_{\l_1}\ox \cP_{\l_2}$ such that the usual properties are satisfied:
\begin{itemize}
\item The braiding relation
\Eq{\D'(X)R:=(\s \circ \D)(X)R=R\D(X), \tab \s(x\ox y)=y\ox x\label{br};}
\item The quasi-triangularity
\Eq{(\D \ox id)(R)=&R_{13}R_{23}\label{qt1},\\
(id\ox\D)(R)=&R_{13}R_{12}\label{qt2}.
}\end{itemize}
Here the coproduct $\D$ acts on $R$ in a natural way on the generators, and we have also used the standard leg notation. These together imply the Yang-Baxter equation
\Eq{R_{12}R_{13}R_{23}=R_{23}R_{13}R_{12}.\label{YB}}

The expression of $R$ in the case of $\cU_{q\til{q}}(\sl(2,\R))$ is particularly simple, and is given by
\Eq{R=q^{\frac{H\ox H}{4}} g_b(\be\ox \bf)q^{\frac{H\ox H}{4}},}
where 
\Eq{E=\frac{\bi}{q-q\inv}\be, \tab F=\frac{\bi}{q-q\inv}\bf,\tab K=q^H \label{bigEF}} are the usual generators, and $g_b(x)$ is the remarkable quantum dilogarithm function, central to the study of split real quantum groups. See also \cite{BT} for a discussion why this operator deserves to be called ``universal", although we won't consider this aspect in the present paper.

On the other hand, the universal $R$-matrix in the compact case is given by products of the form
\Eq{\bQ^\half\prod_\a Exp_{q^{-2}}((1-q^{-2})E_\a\ox F_\a)\bQ^\half, \label{Exp1}}
where $\bQ=q^{\sum (dA\inv)_{ij}H_i\ox H_j}$ with $dA$ the symmetrized Cartan matrix. Here $Exp_q(x)$ is the quantum exponential function, and $E_\a$ are the root vectors for $\g$, given by the Lusztig's isomorphism $T_k$ on the simple root vectors, which can be written as certain composition of $q$-commutators, and play a crucial role in the theory of Lusztig's canonical basis \cite{Lu2}.

Therefore a natural proposal will be replacing the expression \eqref{Exp1} by
\Eq{\bQ^\half \prod_\a g_b(\be_\a\ox \bf_\a)\bQ^\half,} 
thus generalizing both equations. More precisely, we have

\begin{MainThm} Let $w_0=s_{i_1}s_{i_2}...s_{i_N}$ be a reduced expression of the longest element of the Weyl group. Then the universal $R$-operator for the positive representations of $\cU_{q\til{q}}(\g_\R)$ acting on $\cP_{\l_1}\ox \cP_{\l_2}\simeq L^2(\R^N)\ox L^2(\R^N)$ is a unitary operator given by
\Eq{\label{introR}R=&\prod_{ij}q_i^{\half(A\inv)_{ij}H_i\ox H_j}\prod_{k=1}^{N} g_b(\be_{\a_k}\ox \bf_{\a_k}) \prod_{ij}q_i^{\half(A\inv)_{ij}H_i\ox H_j},}
where $\be_{\a_k}=T_{i_1} T_{i_2}... T_{i_{k-1}} \be_{i_k}$ is given by the Lusztig's isomorphism in Theorem \ref{LusThm}, similarly for $\bf_{\a_k}$. The product is such that the term $k=1$ appears on the rightmost side.
\end{MainThm}
In particular, by the properties of the transcendental relations \cite{Ip2,Ip3} as well as the self-duality of $g_b(x)$, the universal $R$ operator simultaneously serves as an $R$ operator for the modular double counterpart.

The main difficulty lies in the fact that, in order for the expression \eqref{introR} to be well-defined, we need both $\be_\a$ and $\bf_\a$ to be positive essentially self-adjoint, so that we can apply functional calculus. Following the approach by \cite{KR} and \cite{LS}, the main technical result is that these non-simple basis can actually be obtained by conjugations on the generators by means of the quantum Weyl elements $w_i$, which is unitary in the setting of positive representations (cf. Corollary \ref{Tpos}).
\begin{Thm} The operators $\be_{\a_k}$ and $\bf_{\a_k}$ corresponding to non-simple roots are positive essentially self-adjoint under the positive representations, and satisfy the transcendental relations.
\end{Thm}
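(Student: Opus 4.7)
The strategy is to realize each Lusztig isomorphism $T_i$ as an inner automorphism implemented by conjugation by a unitary operator, namely the quantum Weyl element $w_i$, so that preservation of positivity and essential self-adjointness follows automatically from the spectral theorem. Starting from the positive representations $\cP_\l$ of \cite{Ip2, Ip3}, in which the simple root generators $\be_i, \bf_i$ are positive essentially self-adjoint and satisfy the transcendental relations, the goal is to propagate these two properties through the chain $\be_{\a_k} = T_{i_1}\cdots T_{i_{k-1}}\be_{i_k}$ (and analogously for $\bf_{\a_k}$) using only operator-theoretic manipulations that commute with the functional calculus.

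The first step is to write down, in the positive representation, an explicit candidate for the quantum Weyl element $w_i$. In the $\cU_{q\til{q}}(\sl(2,\R))$ story this is of the form $g_b(\be_i)g_b(\bf_i)g_b(\be_i)$ up to a Gaussian prefactor in $K_i$; since $\be_i$ and $\bf_i$ are positive self-adjoint, functional calculus with $g_b$ produces a unitary operator, and the phase/Gaussian factor involving $H_i$ is unitary by construction. This is precisely the content invoked as Corollary \ref{Tpos}. I would then verify that conjugation by $w_i$ acts on the simple generators by Lusztig's formulae, which reduces to checking a finite list of operator identities; by analytic continuation and density of the common core of $\be_j, \bf_j, K_j$ in $\cP_\l$, it suffices to check these identities on vectors where everything is defined, using the pentagon and inversion relations of $g_b$.

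Given $w_i$ unitary, the propagation step is essentially free: if $X$ is positive essentially self-adjoint with core $\cD$, then $w_i X w_i^{-1}$ is positive essentially self-adjoint with core $w_i \cD$, with spectral measure transported by $w_i$. Iterating this $N-1$ times along the reduced expression $w_0 = s_{i_1}\cdots s_{i_N}$ yields positivity and essential self-adjointness for all $\be_{\a_k}$ and $\bf_{\a_k}$. For the transcendental relations, the key observation is that each $w_i$ simultaneously implements the Lusztig action on both halves of the modular double, because $g_b$ is self-dual under $b \mapsto b^{-1}$ and the Gaussian factor in $H_i$ respects the rescaling $\til{H_i} = H_i/b_i^2$. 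Hence if $\be_{i_k}^{1/b_{i_k}^2} = \til{\be}_{i_k}$ holds at the simple level, applying $\mathrm{Ad}(w_{i_1}\cdots w_{i_{k-1}})$ to both sides preserves the equality, giving $\be_{\a_k}^{1/b_{\a_k}^2} = \til{\be}_{\a_k}$ (with the correct rescaling since the $w_i$ permute the root lengths along $w_0$).

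The main obstacle is the rigorous construction of $w_i$ as a \emph{genuine} unitary on the infinite-dimensional $\cP_\l$ together with the operator-theoretic verification of the Lusztig relations $w_i X w_i^{-1} = T_i(X)$ on unbounded operators. Formal manipulations using only the pentagon and quasi-commutation relations of $g_b$ give the correct answer on a dense subspace, but one must identify an explicit common core invariant under $w_i$, check that the resulting operator is essentially self-adjoint on that core (using Nelson-type analyticity of vectors in $\cP_\l$), and control the non-simply-laced case where $b_i$ varies with $i$ and the quantum Serre relations involve higher $q$-commutators. This core-tracking, rather than the algebraic identities themselves, will be the technical heart of the argument.
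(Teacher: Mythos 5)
Your overall architecture coincides with the paper's: realize each $T_i$ as conjugation by a unitary quantum Weyl element $w_i$, so that positivity, essential self-adjointness and the transcendental relations propagate along $\be_{\a_k}=T_{i_1}\cdots T_{i_{k-1}}\be_{i_k}$ by functional calculus. However, the two load-bearing steps are done quite differently in the paper, and as written your version of each has a gap.

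First, the construction of $w_i$. You propose an explicit product formula of the shape $g_b(\be_i)g_b(\bf_i)g_b(\be_i)$ times a Gaussian in $K_i$, to be verified against Lusztig's formulae via pentagon identities on a dense core. The paper does not do this; it first proves the branching rule (Theorem \ref{branchingrules}) by explicitly diagonalizing the rescaled Casimir $\bc_i$ through a chain of unitary $g_b$-transformations and coordinate changes, obtaining $\cP_\l\simeq L^2(\R^{N-2})\ox\int_{\R_+}P_\c\,d\mu(\c)$, and then defines $w_i$ \emph{fiberwise} as the reflection $f(x)\mapsto e^{\pi\bi(\c^2+Q^2/4)}f(-x)$ on each $P_\c$ (equations \eqref{wact}--\eqref{ww}). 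Unitarity is then manifest, and no core-tracking or Nelson-analyticity argument is needed. Your route would require proving that your candidate operator is unitary and implements $T_i$ on all of $\cU_{q\til{q}}(\g_\R)$ directly from dilogarithm identities in the $|q|=1$, infinite-dimensional setting --- precisely the computation the branching-rule construction is designed to avoid; you assert it "reduces to checking a finite list of operator identities" without indicating how those checks go through. Note also that Corollary \ref{Tpos} is the \emph{conclusion} of this construction, not a prior input you can invoke for unitarity.

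Second, and more seriously, you omit the direct rank-two verification (Propositions \ref{eij} and \ref{eij2}) that the algebraic $q$-commutator expressions $\be_{ij}$ are, in the explicit $A_2$, $B_2$, $G_2$ representations, sums of positive exponentials that $q^2$-commute successively, so that Lemma \ref{qbi} yields positivity, essential self-adjointness and the transcendental relation for them. This step is not redundant with the conjugation argument: unitary conjugation tells you that $w_i\be_jw_i^{-1}$ is positive, but not that it coincides with the Lusztig root vector given by the $q$-commutator formula. The paper identifies the two via the Kirillov--Reshetikhin adjoint-action argument ($V_{ij}=\{\over[\be]_i^{\,n}\circ\over[\be]_j\}$ is an irreducible highest-weight $\cU_q(\sl_2)$-module, so $w_i\circ\over[\be]_j=c_{ij}\,\over[\be]_i^{-a_{ij}}\circ\over[\be]_j$), and then the constant $c_{ij}'$ in $w_i\be_jw_i^{-1}=c_{ij}'\,\be_{ij}K_i^{a_{ij}/2}$ is pinned down \emph{using} the independently established positivity and transcendental relation of $\be_{ij}$. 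Without the rank-two anchor, your induction establishes properties of some unitarily conjugated operators but not of the non-simple root vectors as actually defined, and the normalization needed for the $R$-operator is left undetermined.
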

Because of the nice properties enjoyed by the \emph{rescaled} generators $\be_i$ and $\bf_i$, we find it instructive throughout the paper to stick with these variables rather than the original $E_i$ and $F_i$ as defined in \eqref{bigEF}.

Another difficulty lies in the fact that since the representations are infinite dimensional, and $|q|=1$, we can no longer work with formal power series and the quantum exponential function. In particular, the usual Drinfeld's double construction trick does not really work anymore. Instead, using hard technical analysis, we discover explicitly certain (considerably new) functional relations (cf. Proposition \ref{genpenta}-\ref{genexp}) of the quantum dilogarithm function $g_b(x)$, and prove directly the braiding relations and the quasi-triangular relations of the $R$ operator.

In order to compute the quantum Weyl elements, we have to compute the branching rules for $\cU_{q\til{q}}(\sl(2,\R))\subset \cU_{q\til{q}}(\g_\R)$. It turns out that the branching rules is particularly simple, and remarkably it resembles both the decomposition of the tensor product representation $P_\a\ox P_\b$ (cf. \cite{PT2}) and the Peter-Weyl type decomposition of $L^2(SL_q^+(2,\R))$ (cf. \cite{Ip1}) with exactly the same Plancherel measure. (cf. Theorem \ref{branchingrules}).

\begin{Thm}
Fixed any positive representation $\cP_\l\simeq L^2(\R^N)$ of $\cU_{q\til{q}}(\g_\R)$. As a representation of $\cU_{q\til{q}}(\sl(2,\R))\subset \cU_{q\til{q}}(\g_\R)$ corresponding to the root $\a_i$, 
\Eq{\cP_{\l}\simeq L^2(\R^{N-2})\ox \int_{\R_+} P_\c d\mu(\c)}
is a unitary equivalence, where $P_\c$ is the positive representation of $\cU_{q\til{q}}(\sl(2,\R))$ with parameter $\c$, and $d\mu(\c)=|S_{b_i}(Q_i+2\c)|^2d\c$.
\end{Thm}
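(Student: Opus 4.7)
My plan is to exploit the explicit realization of the positive representation $\cP_\l$ on $L^2(\R^N)$ constructed in \cite{Ip2, Ip3}, which is defined relative to a choice of reduced expression $w_0 = s_{i_1}\cdots s_{i_N}$ of the longest Weyl element. Two observations are essential. First, changing the reduced expression yields a unitarily equivalent representation, so I am free to choose the reduced word. Second, any simple reflection $s_i$ can be placed as the leftmost factor of some reduced word for $w_0$, since $\ell(s_iw_0) = N-1$. With $s_i$ taken first, the explicit formulas for the rescaled generators $\be_i$, $\bf_i$, $K_i$ show that the subalgebra $\cU_{q\til{q}}(\sl(2,\R))_i$ acts nontrivially only on two of the $N$ coordinates of $L^2(\R^N)$, while the remaining $N-2$ variables enter merely as parameters that commute with the subalgebra action.

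This reduces the problem to showing that the restriction of $\cP_\l$ to $\cU_{q\til{q}}(\sl(2,\R))_i$ factors as $L^2(\R^{N-2})\otimes M_\l$, where $M_\l$ is a representation of $\cU_{q\til{q}}(\sl(2,\R))$ on $L^2(\R^2)$ whose generators have the functional shape familiar from the canonical positive representations of \cite{PT1, PT2}, but carrying an additional variable lying in the commutant. I would then decompose $M_\l$ as a direct integral of positive representations $P_\c$ via the spectral decomposition of the Casimir operator, which is a positive essentially self-adjoint operator commuting with the subalgebra action. The extra commuting variable can be used to diagonalize the Casimir, producing the integration parameter $\c \in \R_+$.

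To identify the Plancherel measure, I would compute the explicit unitary intertwiner $M_\l \to \int_{\R_+}P_\c\, d\mu(\c)$ in terms of an integral kernel built from the noncompact quantum dilogarithm. The resulting measure $|S_{b_i}(Q_i+2\c)|^2 d\c$ is precisely the one already encountered in \cite{Ip1} for the Plancherel decomposition of $L^2(SL_q^+(2,\R))$, and in \cite{PT2} for the Clebsch-Gordan decomposition of $P_\a\otimes P_\b$. This coincidence is conceptually natural: the two variables singled out by the initial $s_i$ in the reduced word effectively play the role of a tensor-product pair for the subalgebra, so the same spectral analysis governs both settings.

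The main obstacle lies in the rigorous execution of the second and third steps. One must carry out the Casimir spectral decomposition on the two distinguished variables in closed form, which demands a precise change of variables reducing $M_\l$ to the canonical two-variable presentation of \cite{PT1, PT2}, followed by invoking pentagon-type and self-duality identities for $g_{b_i}$ and $S_{b_i}$ to evaluate the transform. Once the intertwining kernel is identified, the measure $|S_{b_i}(Q_i+2\c)|^2 d\c$ emerges from the formal completeness relation of the associated $b_i$-Whittaker-type transform, at which point essential self-adjointness of the Casimir on the dense domain of analytic vectors of $\cP_\l$ completes the spectral-theoretic identification.
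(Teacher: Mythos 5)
Your overall strategy (fix a convenient reduced word, diagonalize the Casimir, identify the measure from a known spectral decomposition) is the same as the paper's, but your first step contains a genuine error that conceals most of the actual work. It is not true that, for a suitable reduced word, the subalgebra $\cU_{q\til{q}}(\sl(2,\R))_i$ acts nontrivially on only two of the $N$ coordinates with the remaining $N-2$ entering as commuting parameters. In the explicit realization \eqref{FF}--\eqref{EE}, the generator $\bf_i$ is a sum of shift operators $e(p_i^k)$ over \emph{all} occurrences $k$ of the letter $i$ in the reduced word, and both its coefficients and $K_i$ involve essentially every coordinate $v_j$ with $a_{i,r(j)}\neq 0$. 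Already for $A_2$ with $w_0=s_2s_1s_2$ (Proposition \ref{typeA2}), the subalgebra attached to $\a_2$ shifts both $u$ and $w$ and multiplies by functions of all of $u,v,w$, so no choice of reduced word localizes it to two coordinates. The decoupling you assert at the outset is precisely the content of the theorem, not something read off from the formulas; note also that the paper's normalization puts $s_i$ on the \emph{right} (so that $\be_i=[u_i^1]e(-p_i^1)$ is a one-term operator), whereas a leftmost $s_i$ does not even make $\be_i$ simple.

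What the paper actually does after forming the rescaled Casimir $\bc_i=\bf_i\be_i-(q_iK_i+q_i\inv K_i\inv)$ is construct an explicit chain of unitaries: conjugation by $g_{b_i}^*(2u_i^1)$ to remove the $[u_i^1]$ factor, further $g_{b_i}$-conjugations exploiting the successive $q_i^2$-commutativity of the remaining summands to collapse them to a single term, and then several linear changes of variables, reducing $\bc_i$ to the normal form $e^{2\pi b_iu}+e^{-2\pi b_iu}+e^{2\pi b_ip}$ in one conjugate pair. Only then does the known spectral decomposition of this operator (from \cite{Ip1, Ka}) deliver the direct integral with measure $|S_{b_i}(Q_i+2\c)|^2d\c$, and reversing the transformations exhibits the $L^2(\R^{N-2})$ tensor factor. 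Your proposal correctly anticipates the role of the Casimir and the origin of the Plancherel measure, but without this intermediate chain of $g_b$-conjugations the reduction from $N$ variables to a single conjugate pair does not go through, and that chain is the technical core of the proof.
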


Furthermore, we also encounter the calculation of the ribbon element $v$, and the element $u$ which exists for any (regular) quasi-triangular Hopf algebra. Therefore it is strongly suggested that there is an underlying algebraic structure enveloping all the calculations so far. In particular, the expression for the universal $R$-operator suggest that it is a canonical element of certain algebra with a ``continuous basis", very similar to the analysis that has been done for the quantum plane in our previous work \cite{Ip1}. Therefore we proceed to construct the split real quantum group in the $C^*$-algebraic setting, and show that in fact the satisfactory answer lies in the language of a \emph{multiplier Hopf algebra}, introduced by van Daele \cite{VD}. Consequently, all the calculations made so far are rigorously defined and simplified by the following (cf. Corollary \ref{mainRCor}):
\begin{Thm} The universal $R$ operator from the Main Theorem can be considered as (the projection of) the canonical element of the Drinfeld's double (cf. \cite{DvD, DvD2}) $\cD(\bU\bb)$ of the multiplier Hopf algebraic version of the Borel subalgebra $\bU\bb$,
\end{Thm}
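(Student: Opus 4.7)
The plan is to reduce the statement to two previously established ingredients: the multiplier Hopf algebra structure of $\bU\bb$ together with its nondegenerate pairing with the ``opposite'' Borel (which gives Drinfeld's double a well-defined canonical element), and the PBW-type factorization of $\bU\bb$ provided by the Lusztig isomorphism established earlier. With these in hand, the canonical element of $\cD(\bU\bb)$ factorizes according to the root decomposition, and each factor can be recognized as either $g_b$ of a root-vector pair or the Cartan term $\bQ$.

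More concretely, my first step would be to recall from the multiplier Hopf algebra section that the nondegenerate pairing $\langle-,-\rangle:\bU\bb_+ \otimes \bU\bb_- \to \C$ (pairing $\be_i$ with $\bf_i$ and the respective Cartan generators) makes $\cD(\bU\bb)$ well-defined, and that the canonical element $\bR \in M(\bU\bb_+ \otimes \bU\bb_-)$ is characterized as the unique multiplier satisfying
\Eq{(\D\ox id)(\bR)=\bR_{13}\bR_{23}, \tab (id\ox \D)(\bR)=\bR_{13}\bR_{12}\notag}
together with the implementation of the coproduct flip, exactly as in the formal theory of the quantum double. Projecting $\bR$ onto $\cP_{\l_1}\ox \cP_{\l_2}$ via the representations constructed in \cite{Ip2,Ip3} gives a (potentially unbounded) operator, and what must be shown is that this operator coincides with the expression \eqref{introR}.

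The second step is to exploit the Lusztig isomorphism from Theorem \ref{LusThm} to obtain a root-vector decomposition $\bU\bb_\pm \simeq \bigotimes_{k=1}^N \cH_{\a_k}^\pm \cdot K^\pm$ (in the PBW sense), where each $\cH_{\a_k}^\pm$ is a ``quantum plane'' algebra generated by $\be_{\a_k}$ (resp.~$\bf_{\a_k}$). Because the pairing is compatible with this factorization in the sense that distinct root spaces pair trivially, the canonical element splits as a product
\Eq{\bR = \bQ^{\half}\cdot \prod_{k=1}^N \bR_{\a_k}\cdot \bQ^{\half},\notag}
where $\bQ$ comes from the pairing on the Cartan part and $\bR_{\a_k}$ is the canonical element of the pairing between $\cH_{\a_k}^+$ and $\cH_{\a_k}^-$. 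The canonical element of the quantum plane in the positive setting was computed in \cite{Ip1} and equals $g_b(\be_{\a_k}\ox \bf_{\a_k})$; projecting onto the positive representations and substituting yields precisely \eqref{introR}.

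The main obstacle will not be the abstract factorization but rather verifying that the ordering of the factors $\bR_{\a_k}$ inherited from the canonical element of $\cD(\bU\bb)$ agrees with the order prescribed by the reduced word $w_0 = s_{i_1}\cdots s_{i_N}$; this is ultimately a combinatorial/analytic check that the quasi-triangularity relations established earlier force this ordering uniquely (up to the braid-group moves on reduced expressions, whose compatibility follows from the functional relations of Proposition \ref{genpenta}--\ref{genexp}). Once the ordering matches and the Cartan factor is identified with $\bQ = \prod_{ij}q_i^{(A\inv)_{ij}H_i\ox H_j}$ via the explicit form of the pairing on the Cartan, the identification with the Main Theorem follows, and the a~priori formal expression is made rigorous as a multiplier in the sense of \cite{VD,DvD,DvD2}.
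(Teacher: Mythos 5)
Your proposal follows essentially the paper's own route: the Hopf pairing is defined to be diagonal in the ordered continuous basis determined by the reduced word (distinct root directions and the Cartan part pair independently), so the canonical element factorizes into the Cartan factor $\bQ$ times the ordered product of rank-one canonical elements, each of which is identified with $g_b(\over[\be]_{\a_k}\ox \over[\bf]_{\a_k})$ via the Fourier transform formula for $g_b$ — exactly your factorization. The only slips are minor: the canonical element of $\cD(\bU\bb)$ is characterized by the duality property $\<R, b\ox a\>=\<a,b\>$ (from which quasi-triangularity then follows), not by quasi-triangularity itself, and the ordering of the factors is built into the definition of the basis of $\bU\bb$ and the pairing rather than being forced a posteriori (independence of the reduced expression is handled separately via the Coxeter relations).
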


Finally, we remark that the ribbon element $v$ calculated are also of certain interest, since the expression involves the number $Q=b+b\inv$, which implies that there is no classical limit as $b\to 0$. Hence this ribbon element differs from the one usually considered in compact quantum group, and it is well-known that the ribbon structure of Hopf algebra is needed to construct quantum topological invariant by the Reshetikhin-Turaev construction \cite{RT1, RT2}. Therefore this may serve as evidence for the possibility of constructing new classes of topological invariants.

The paper is organized as follows. Section \ref{sec:prelim} serves as the technical backbone of the paper. We fix the notations by recalling the definition of $\cU_q(\g)$ associated to simple Lie algebra $\g$ of general type. Next we recall the main properties and construction of the positive representations considered in \cite{FI, Ip2, Ip3}, and write down explicitly a particular expression for the rank=2 case. Since the paper involves a lot of technical computations, we review in detail the definition and properties of the quantum dilogarithm function $G_b$ and its variant $g_b$, which summarizes old and new results from \cite{BT, Ip1, Ip3} that is needed in this paper. Finally we recall the construction of the universal $R$-matrices by \cite{KR} and \cite{LS} in the compact quantum group case, as well as the universal $R$ operator by \cite{BT} in the case of $\cU_{q\til{q}}(\sl(2,\R))$.

In Section \ref{sec:qdlog}, we extend the quantum exponential relations and pentagon relations of $g_b(x)$ to more generalized setting involving certain $q$-commutators. These new functional relations are what we needed to prove the properties of the $R$ operator. In Section \ref{sec:QWeyl}, we proceed to construct the quantum Weyl elements so that conjugations by them realize Lusztig's isomorphism.  It involves calculating the ribbon element, and the branching rules of $\cU_{q\til{q}}(\sl(2,\R))\subset \cU_{q\til{q}}(\g_\R)$. In Section \ref{sec:R}, we state the main theorem about the universal $R$ operator, and prove the braiding relations and quasi-triangularity in the simply-laced case, while we only give several remarks on the non-simply-laced case to avoid getting too technical. Finally in Section \ref{sec:Cstar}, we introduce the notion of a multiplier Hopf algebra, and by finding certain Hopf pairing, we show that the universal $R$ operator can actually be regarded as the canonical element of a Drinfeld's double construction of the Borel subalgebra as a multiplier Hopf algebra, and we introduce a ribbon structure in the extension of the split real quantum group.

\textbf{Acknowledgments.} I would like to thank my advisor Prof I. Frenkel for ongoing support of this research program. I would also like to thank Changzheng Li for helpful discussion. This work was supported by World Premier International Research Center Initiative (WPI Initiative), MEXT, Japan.

\section{Preliminaries}\label{sec:prelim}
Throughout the paper, we will fix once and for all $q=e^{\pi \bi b^2}$ with $\bi =\sqrt{-1}$, \\$0<b^2<1$ and $b\in\R\setminus\Q$. We also denote by $Q=b+b\inv$.
\subsection{Definition of $\cU_q(\g)$}\label{sec:prelim:Uqgr}
In order to fix the convention we use throughout the paper, we recall the definition of the quantum group $\cU_q(\g_\R)$ where $\g$ is of general type \cite{CP}.
Let $I=\{1,2,...,n\}$ denote the set of nodes of the Dynkin diagram of $\g$ where $n=rank(\g)$.
\begin{Def} \label{qi} Let $(-,-)$ be the inner product of the root lattice. Let $\a_i$, $i\in I$ be the positive simple roots, and we define
\begin{eqnarray}
a_{ij}=\frac{2(\a_i,\a_j)}{(\a_i,\a_i)},\\
q_i:=q^{\frac{1}{2}(\a_i,\a_i)}:=e^{\pi \bi  b_i^2},
\end{eqnarray}
where $A=(a_{ij})$ is the Cartan matrix. We will let $\a_1$ be the short root in type $B_n$ and the long root in type $C_n, F_4$ and $G_2$.

We choose 
\Eq{\frac{1}{2}(\a_i,\a_i)=\case{1&\mbox{$i$ is long root or in the simply-laced case,}\\\frac{1}{2}&\mbox{$i$ is short root in type $B,C,F$,}\\\frac{1}{3}&\mbox{$i$ is short root in type $G_2$,}}}and $(\a_i,\a_j)=-1$ when $i,j$ are adjacent in the Dynkin diagram.

Therefore in the case when $\g$ is of type $B_n$, $C_n$ and $F_4$, if we define $b_l=b$, and $b_s=\frac{b}{\sqrt{2}}$ we have the following normalization:
\begin{eqnarray}
\label{qiBCF}q_i=\case{e^{\pi \bi b_l^2}=q&\mbox{$i$ is long root,}\\e^{\pi \bi b_s^2} =q^{\frac{1}{2}}&\mbox{$i$ is short root.}}
\end{eqnarray}
In the case when $\g$ is of type $G_2$, we define $b_l=b$, and $b_s=\frac{b}{\sqrt{3}}$, and we have the following normalization:
\begin{eqnarray}
\label{qiG}q_i=\case{e^{\pi \bi b_l^2}=q&\mbox{$i$ is long root,}\\e^{\pi \bi b_s^2} =q^{\frac{1}{3}}&\mbox{$i$ is short root.}}
\end{eqnarray}
\end{Def}

\begin{Def} Let $A=(a_{ij})$ denote the Cartan matrix. Then $\cU_q(\g)$ with $q=e^{\pi\bi b_l^2}$ is the algebra generated by $E_i$, $F_i$ and $K_i^{\pm1}$, $i\in I$ subject to the following relations:
\begin{eqnarray}
K_iE_j&=&q_i^{a_{ij}}E_jK_i,\\
K_iF_j&=&q_i^{-a_{ij}}F_jK_i,\\
{[E_i,F_j]} &=& \d_{ij}\frac{K_i-K_i\inv}{q_i-q_i\inv},
\end{eqnarray}
together with the Serre relations for $i\neq j$:
\begin{eqnarray}
\sum_{k=0}^{1-a_{ij}}(-1)^k\frac{[1-a_{ij}]_{q_i}!}{[1-a_{ij}-k]_{q_i}![k]_{q_i}!}E_i^{k}E_jE_i^{1-a_{ij}-k}&=&0,\label{SerreE}\\
\sum_{k=0}^{1-a_{ij}}(-1)^k\frac{[1-a_{ij}]_{q_i}!}{[1-a_{ij}-k]_{q_i}![k]_{q_i}!}F_i^{k}F_jF_i^{1-a_{ij}-k}&=&0,\label{SerreF}
\end{eqnarray}
where $[k]_q=\frac{q^k-q^{-k}}{q-q\inv}$. We also define $H_i$ so that $K_i=q_i^{H_i}$.
\end{Def}

The Hopf algebra structure of $\cU_q(\g)$ is given by
\Eq{
\D(E_i)=&K_i^{-\half}\ox E_i+E_i\ox K_i^{\half},\\
\D(F_i)=&K_i^{-\half}\ox F_i+F_i\ox K_i^{\half},\\
\D(K_i)=&K_i\ox K_i,\\
\e(E_i)=&\e(F_i)=0,\tab \e(K_i)=1,\\
S(E_i)=&-q_iE_i, \tab S(F_i)=-q_i\inv F_i,\tab S(K_i)=K_i\inv.
}

We define $\cU_q(\g_\R)$ to be the real form of $\cU_q(\g)$ induced by the star structure
\Eq{E_i^*=E_i,\tab F_i^*=F_i,\tab K_i^*=K_i.}
Finally, according to the results of \cite{Ip2,Ip3}, we define the modular double $\cU_{q\til{q}}(\g_\R)$ to be
\Eq{\cU_{q\til{q}}(\g_\R):=\cU_q(\g_\R)\ox \cU_{\til{q}}(\g_\R)&\tab \mbox{$\g$ is simply-laced,}\\
\cU_{q\til{q}}(\g_\R):=\cU_q(\g_\R)\ox \cU_{\til{q}}({}^L\g_\R)&\tab \mbox{otherwise,}}
where $\til{q}=e^{\pi \bi b_s^{-2}}$, and ${}^L\g_\R$ is the Langland's dual of $\g_\R$ obtained by interchanging the long roots and short roots of $\g_\R$.

\subsection{Positive representations of $\cU_{q\til{q}}(\g_\R)$}\label{sec:prelim:pos}
In \cite{FI, Ip2,Ip3}, a special class of representation for $\cU_{q\til{q}}(\g_\R)$, called the positive representation, is defined. The generators of the quantum groups are realized by positive essentially self-adjoint operators, and also satisfy the so-called \emph{transcendental relations}, relating the quantum group with its modular double counterpart. More precisely, we have
\begin{Thm} Let 
\Eq{\be_i:=2\sin(\pi b_i^2)E_i,\tab \bf_i:=2\sin(\pi b_i^2)F_i.\label{smallef}}
Note that $2\sin(\pi b_i^2)=\left(\frac{\bi}{q_i-q_i\inv}\right)\inv$.
Then there exists a representation $P_{\l}$ of $\cU_{q\til{q}}(\g_\R)$ parametrized by the $\R_+$-span of the cone of positive weights $\l\in P_\R^+$, or equivalently by $\l\in \R_+^n$ where $n=rank(\g)$, such that 
\begin{itemize}
\item The generators $\be_i,\bf_i,K_i$ are represented by positive essentially self-adjoint operators acting on $L^2(\R^{l(w_0)})$, where $l(w_0)$ is the length of the longest element $w_0\in W$ of the Weyl group.
\item Define the transcendental generators:
\Eq{\til{\be_i}:=\be_i^{\frac{1}{b_i^2}},\tab \til{\bf_i}:=\bf_i^{\frac{1}{b_i^2}},\tab \til{K_i}:=K_i^{\frac{1}{b_i^2}}.\label{transdef}}
Then \begin{itemize}
\item if $\g$ is simply-laced, the generators $\til{\be_i},\til{\bf_i},\til{K_i}$ is obtained by replacing $b$ with $b\inv$ in the representation of the generators $\be_i,\bf_i,K_i$. \\
\item If $\g$ is of type $B,C,F,G$, then the generators $\til{E_i},\til{F_i},\til{K_i}$ with
\Eq{\til{\be_i}:=2\sin(\pi b_i^{-2})\til{E_i},\tab \til{\bf_i}:=2\sin(\pi b_i^{-2})\til{F_i}}
generates $\cU_{\til{q}}({}^L\g_\R)$ defined in the previous section.
\end{itemize}
\item The generators $\be_i,\bf_i,K_i$ and $\til{\be_i},\til{\bf_i},\til{K_i}$ commute weakly up to a sign.
\end{itemize}
\end{Thm}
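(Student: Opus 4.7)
The plan is to build the representation $P_\lambda$ by induction along a reduced expression $w_0 = s_{i_1}\cdots s_{i_N}$ of the longest Weyl element, realizing the generators as explicit positive operators on $L^2(\R^N)$, and then to check the algebraic relations, self-adjointness, and the transcendental duality in that order.

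First I would write down the rank-one building block. For $\cU_q(\sl(2,\R))$ with parameter $\lambda$, set $K = e^{-2\pi b x}$, $\be = e^{\pi b(p+x+\lambda)} + e^{\pi b(-p+x+\lambda)}$, $\bf = e^{\pi b(p-x-\lambda)} + e^{\pi b(-p-x-\lambda)}$ (or one of the equivalent normal forms used in \cite{FI}), where $x,p$ are position and momentum on $\R$ satisfying $[x,p]=(2\pi \bi)\inv$. Each summand is of the form $e^{\pi b L}$ with $L$ a real-affine self-adjoint form, hence positive essentially self-adjoint; and the sum of two such operators whose Weyl commutation constant lies in $q^{2\Z_{>0}}$ is again positive essentially self-adjoint by the standard Schur-type lemma used in \cite{FI,Ip2}.

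Next, for higher rank I would glue the rank-one pieces along the reduced word: adjoin one factor $L^2(\R)$ for each simple reflection in $w_0$, and extend the simple-root data at the $k$th factor by applying the iterated coproduct $\Delta^{(k)}$, conjugated by appropriate Lusztig-type intertwiners. This produces each generator $\be_i,\bf_i,K_i$ as an explicit finite sum of exponentials $e^{\pi b_i L}$ in the coordinates $x_1,\ldots,x_N,p_1,\ldots,p_N$ and parameters $\lambda_1,\ldots,\lambda_n$, all with nonnegative coefficients for $\lambda \in \R_+^n$; positivity and essential self-adjointness then follow summand-by-summand as in the rank-one case. The Serre relations \eqref{SerreE},\eqref{SerreF} are verified by reduction to the rank-two subalgebras, where they become identities among $q$-commutators of exponentials on the quantum plane $UV = q_i^2 VU$; this rank-two case analysis is the main technical content of \cite{Ip2,Ip3}.

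The transcendental relations \eqref{transdef} are essentially free: since every generator is a sum of operators of the form $A = e^{\pi b_i L}$ with $L$ positive self-adjoint, the functional calculus gives $A^{1/b_i^2} = e^{\pi b_i^{-1} L}$, which is obtained from $A$ by replacing $b\mapsto b\inv$. In the simply-laced case this directly yields the generators of $\cU_{\tilde q}(\g_\R)$, and in the non-simply-laced case the relabeling $b_l \leftrightarrow b_s$ under $b \mapsto b\inv$ swaps long and short roots, which is exactly the Langland's dual $\cU_{\tilde q}({}^L\g_\R)$. Weak commutativity of each pair $(A,\widetilde A)$ reduces to Weyl commutation of the underlying linear forms $L,\widetilde L = b_i^{-2} L$ in $x_k,p_k$, which yields a scalar $e^{2\pi \bi(b_i^2)(b_i^{-2})} = 1$ on off-diagonal cross terms and $\pm 1$ on the remaining ones. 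The main obstacle, as in \cite{Ip3}, is the non-simply-laced Serre relations for types $F_4$ and $G_2$, which require rather intricate quantum-plane identities; I would handle them by a case-by-case rank-two verification, exactly as carried out in \cite{Ip3}.
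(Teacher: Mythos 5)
This theorem is not proved in the paper at all: it is imported verbatim from \cite{FI,Ip2,Ip3}, and the paper only records the explicit formulas \eqref{FF}--\eqref{EE} and the rank~$1$, $2$ examples. So the only meaningful comparison is with the construction in those references, and at that level your outline follows the same route (rank-one building block, induction along a reduced word of $w_0$, rank-two verification of the Serre relations, functional calculus for the $1/b_i^2$ powers). Two places, however, are asserted where the real work lies. First, positivity of each individual summand $e^{\pi b_i L}$ is indeed trivial, but essential self-adjointness of the \emph{sum} and, more importantly, the transcendental relation $(\sum_k A_k)^{1/b_i^2}=\sum_k A_k^{1/b_i^2}$ are not ``essentially free'': both rest on Lemma \ref{qbi}, which requires the summands to $q_i^2$-commute \emph{successively in the correct order}. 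Verifying this ordering for every generator in a fixed coordinate system (note that in Proposition \ref{typeA2} the operator $\be_1$ already has two terms whose commutation constant must be checked, since \eqref{EE} only gives a one-term $\be_i$ after moving $s_i$ to the end of $w_0$) is precisely the technical content of \cite{Ip2,Ip3}; your sketch invokes the lemma for self-adjointness but never connects it to the additivity of the $1/b_i^2$ power, which is the statement being proved. Second, in the non-simply-laced case the claim that ``$b_l\leftrightarrow b_s$ under $b\mapsto b^{-1}$ swaps long and short roots, which is exactly the Langlands dual'' is too quick: the tilde generators must be shown to satisfy the Serre relations for the \emph{transposed} Cartan matrix ($a_{ij}$ replaced by $a_{ji}$, cf.\ Proposition \ref{eij2}), and this is an analytic verification, not a formal relabeling --- the paper explicitly flags these as ``new explicit analytic relations'' in \cite{Ip3}. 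Neither point invalidates your strategy, but as written the proposal defers exactly the steps that constitute the proof.
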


The positive representations are constructed for each reduced expression $w_0\in W$ of the longest element of the Weyl group, and representations corresponding to different reduced expression are unitary equivalent.
\begin{Def} Fixed a reduced expression of $w_0=s_{i_1}...s_{i_N}$. Let the coordinates of $L^2(\R^N)$ be denoted by $\{u_i^k\}$ so that $i$ is the corresponding root index, and $k$ denotes the sequence this root is appearing in $w_0$ from the right. Also denote by $\{v_j\}_{j=1}^N$ the same set of coordinates counting from the left, $v(i,k)$ the index such that $u_i^k=v_{v(i,k)}$, and $r(k)$ the root index corresponding to $v_k$.
\end{Def}
\begin{Ex} The coordinates of $L^2(\R^6)$ for $A_3$ corresponding to $w_0=s_3s_2s_1s_3s_2s_3$ is given by
$$(u_3^3,u_2^2,u_1^1,u_3^2, u_2^1,u_3^1)=(v_1,v_2,v_3,v_4,v_5,v_6).$$
\end{Ex} 
\begin{Def}\label{usul}Denote by 
\Eq{[u_s+u_l]e(-p_s-p_l):=e^{\pi b_s(-u_s-2p_s)+\pi b_l(-u_l-2p_l)}+e^{\pi b_s(u_s-2p_s)+\pi b_l(u_l-2p_l)},}
where $u_s$ is a linear combination of the variables corresponding to short roots, while $u_l$ is a linear combination of the variables corresponding to long roots, or variables in the simply-laced case. (The parameters $\l_i$ are also considered in both cases.)
\end{Def}
\begin{Thm}\cite{Ip2,Ip3} For a fixed reduced expression of $w_0$, the positive representation is given by
\Eq{
\bf_i=&\sum_{k=1}^n\left[-\sum_{j=1}^{v(i,k)-1} a_{i,r(j)}v_j-u_i^k-2\l_i\right]e(p_{i}^{k}),\label{FF}\\
K_i=&e^{-\pi (\sum_{k=1}^{l(w_0)} a_{i,r(k)}b_{r(k)}v_k+2b_\bi \l_i)},\label{KK}
}
and by taking $w_0=w's_i$ so that the simple reflection for root $i$ appears on the right, the action of $\be_i$ is given by
\Eq{\label{EE} 
\be_i=&[u_i^1]e(-p_i^1).}
\end{Thm}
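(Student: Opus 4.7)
The strategy is to verify the explicit formulas directly by reducing every check to local combinatorial identities indexed by the reduced word $w_0=s_{i_1}\ldots s_{i_N}$. The building blocks $[u]e(-p)$ are Weyl-like exponentials, and the Heisenberg relation $e^{\alpha p_i^k}e^{\beta u_j^l}=e^{\pi\bi\alpha\beta\, b_{i}b_{j}\,\delta_{(i,k),(j,l)}}e^{\beta u_j^l}e^{\alpha p_i^k}$ reduces every algebraic relation to an identity about the linear functionals $\sum_j a_{i,r(j)}v_j$ and $\sum_k a_{i,r(k)}b_{r(k)}v_k$ appearing in the exponents.

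First I would treat the rank-$1$ case ($\g=\sl_2$, $w_0=s_1$), where the formulas collapse to $\be=[u]e(-p)$, $\bf=[-u-2\l]e(p)$, $K=e^{-\pi b(u+2\l)}$; here the $\cU_{q\til q}(\sl(2,\R))$ relations, positivity and essential self-adjointness are standard (Schm\"udgen-type results on sums of positive Weyl exponentials). In general I would establish the Cartan relations $K_i\bf_j=q_i^{-a_{ij}}\bf_jK_i$ and $K_i\be_j=q_i^{a_{ij}}\be_jK_i$: the linear combination $\sum_k a_{i,r(k)}b_{r(k)}v_k$ in $K_i$ is exactly designed so that, for any single summand $[\ldots]e(p_j^k)$ of $\bf_j$, the Heisenberg constant picked up from the Weyl braiding collects to $q_i^{-a_{ij}}$. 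The commutation $[\be_i,\bf_j]=\delta_{ij}(K_i-K_i\inv)/(q_i-q_i\inv)$ follows from the telescoping structure of $\bf_i$: expanding the commutator as a double sum over $k$, almost all cross-terms pair off by the Weyl relations, and the surviving boundary terms assemble into the Cartan element; for $i\neq j$ all cross-terms cancel.

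For the Serre relations \eqref{SerreE}--\eqref{SerreF} one uses the fact that they only involve two simple roots at a time. Restricting to a rank-$2$ pair $(\a_i,\a_j)$, one may (after a local change of reduced word) bring the representation to the rank-$2$ normal form written out explicitly in Section \ref{sec:prelim}, and then directly verify the quantum Serre identity in the three cases $A_2$, $B_2$, $G_2$ using the $q$-binomial identity that is triggered whenever a pair $(X,Y)=(\be_i,\be_j)$ (or the analogous $\bf$'s) satisfies a standard $q$-Heisenberg relation $XY=q^{?}YX+(\text{lower})$. Finally, the transcendental relations \eqref{transdef} are immediate from the formulas: every summand of $\bf_i,\be_i,K_i$ is a single exponential $e^{X}$ where $X$ is real-linear in $b_{r(k)}$-scaled positions and momenta, so raising to $1/b_i^2$ simply replaces $b_i\leftrightarrow b_i\inv$ (with the Langlands-dual swap of long/short roots in the non-simply-laced case).

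The main obstacle is the Serre relation in the non-simply-laced rank-$2$ cases, especially $G_2$, where the relation has order $4$, the two scales $b_l$ and $b_s$ enter simultaneously, and the telescoping of Weyl constants across four factors produces numerous terms that cancel only after using nontrivial quantum combinatorial identities; positivity and essential self-adjointness of the resulting sums of mixed-scale exponentials must also be verified in this case. This is precisely the step carried out in full in \cite{Ip3}, and here I would only need to cite it after confirming the combinatorial reduction.
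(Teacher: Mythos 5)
The paper does not actually prove this theorem: it is recalled verbatim from \cite{Ip2,Ip3} as part of the preliminaries, so there is no internal proof to compare against. Your outline --- derive the Cartan relations from the design of the exponent of $K_i$, obtain $[\be_i,\bf_j]$ by telescoping, reduce the Serre relations to the rank-$2$ normal forms, and defer the hard $B_2/G_2$ analysis to \cite{Ip3} --- is essentially the verification strategy of those cited papers, so the overall route is the right one.

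Two concrete points need repair. First, your claim that the transcendental relations \eqref{transdef} are ``immediate from the formulas'' because each summand is a single exponential is not correct as stated: $\be_i^{1/b_i^2}$ and $\bf_i^{1/b_i^2}$ are powers of \emph{sums} of Weyl exponentials, and $(U+V)^{1/b^2}=U^{1/b^2}+V^{1/b^2}$ fails for general positive $U,V$. One needs the summands arranged so that consecutive terms $q_i^2$-commute and then Lemma \ref{qbi} (the Volkov/quantum-binomial identity, which is also what yields essential self-adjointness of the sum); this is exactly the mechanism invoked in the proof of Proposition \ref{eij}, and it is not optional. Relatedly, your rank-$1$ base case has a factor-of-$2$ slip: \eqref{KK} with $a_{11}=2$ gives $K=e^{-2\pi b(u+\l)}$, not $e^{-\pi b(u+2\l)}$, and the coefficient $2$ on $u$ is needed for $K\be=q^2\be K$. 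Second, your reduction of the Serre relations ``after a local change of reduced word'' presupposes the unitary equivalence of the representations attached to different reduced expressions of $w_0$; likewise formula \eqref{EE} for $\be_i$ holds only for words ending in $s_i$, and for a fixed word the remaining $\be_j$ have longer expressions (cf.\ Proposition \ref{typeA2}). That equivalence is itself a nontrivial part of the construction in \cite{Ip2,Ip3} and must either be established or explicitly cited before the rank-$2$ reduction is legitimate.
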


In this paper, it is instructive to recall the explicit expression in the case of rank 1 and 2. For details of the construction and the other cases please refer to \cite{Ip2,Ip3}. 

\begin{Prop}\cite{BT,PT2}\label{canonicalsl2} The positive representation $P_\l$ of $\cU_{q\til{q}}(\sl(2,\R))$ is given by
\Eqn{
\be=&[u-\l]e(-p)=e^{\pi b(-u+\l-2p)}+e^{\pi b(u-\l-2p)},\\
\bf=&[-u-\l]e(p)=e^{\pi b(u+\l+2p)}+e^{\pi b(-u-\l+2p)},\\
K=&e^{-2\pi bu}.
}
(Note that it is related to the canonical form \eqref{FF}-\eqref{EE} by $u\mapsto u+\l$.)
\end{Prop}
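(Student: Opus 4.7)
The plan is to realize $u$ and $p$ as the canonical Heisenberg pair on $L^{2}(\R)$ with $[u,p]=\frac{\bi}{2\pi}$, and to verify directly that the displayed formulas define a $*$-representation of $\cU_{q}(\sl(2,\R))$ by positive essentially self-adjoint operators satisfying the transcendental relations. Each individual operator in the statement is the exponential of a real linear combination of $u$ and $p$, hence positive self-adjoint by Stone's theorem and the functional calculus. All algebraic relations will reduce to Baker--Campbell--Hausdorff manipulations on these exponentials; the only nontrivial analytic ingredient is the positive essential self-adjointness of the sums $\be=X_{1}+X_{2}$ and $\bf=Y_{1}+Y_{2}$, which I will quote rather than reprove.

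For the algebraic relations, the covariances $K\be K\inv=q^{2}\be$ and $K\bf K\inv=q^{-2}\bf$ reduce, on each summand separately, to the Weyl commutation between $e^{\mp 2\pi b u}$ and $e^{\mp 2\pi b p}$, producing the phase $q^{\pm 2}$. Rescaling the defining relation $[E,F]=(K-K\inv)/(q-q\inv)$ via \eqref{smallef} gives the equivalent target
\[ [\be,\bf]=-(q-q\inv)(K-K\inv). \]
I would expand $\be\bf-\bf\be$ into four cross-terms $[X_{i},Y_{j}]$: a short BCH check shows that the two diagonal pairs $(X_{1},Y_{1})$ and $(X_{2},Y_{2})$ commute exactly, while the remaining pairs acquire Weyl phases $e^{\pm\bi\pi b^{2}}$. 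The corresponding commutators equal $\mp 2\bi\sin(\pi b^{2})$ times $e^{A+B}$, and the surviving sums of exponents evaluate to $-2\pi b u$ and $2\pi b u$, giving $K$ and $K\inv$. Adding the four contributions and using $q-q\inv=2\bi\sin(\pi b^{2})$ produces $-(q-q\inv)(K-K\inv)$.

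For positivity, $K$ is manifestly positive self-adjoint, and the same BCH computation that produces the above phases yields $X_{1}X_{2}=q^{2}X_{2}X_{1}$, exhibiting $\be$ as the sum of two positive operators forming a Weyl pair with parameter $q^{2}$ (similarly for $\bf$). The positive essential self-adjointness of such a sum is the fundamental analytic fact behind the quantum plane (Faddeev--Volkov; compare the properties of $g_{b}$ reviewed later), and it is precisely what enables functional calculus on $\be$ and $\bf$. The transcendental relations then follow because substituting $b\mapsto b\inv$ in the exponents is the spectral substitution $x\mapsto x^{1/b^{2}}$ on each individual Weyl element, and the identity $(X_{1}+X_{2})^{1/b^{2}}=X_{1}^{1/b^{2}}+X_{2}^{1/b^{2}}$ valid for any $q^{2}$-commuting pair of positives is again a manifestation of the modular self-duality of the quantum plane. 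Finally, the parenthetical identification with \eqref{FF}--\eqref{EE} is the one-line substitution $u\mapsto u+\l$ in Definition~\ref{usul} at $l(w_{0})=1$.

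The main obstacle is analytic rather than algebraic: it is the positive essential self-adjointness of a sum of two $q^{2}$-commuting positive operators, together with its compatibility with the power map $x\mapsto x^{1/b^{2}}$. Once this input is accepted, all the remaining verifications are routine BCH bookkeeping on Heisenberg operators.
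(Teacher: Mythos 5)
The paper does not prove Proposition \ref{canonicalsl2} at all: it is imported verbatim from \cite{BT,PT2}, so there is no internal argument to compare against. Your direct verification is nevertheless correct and is exactly the computation those references carry out. The four cross-commutators work out as you describe: with $[u,p]=\frac{\bi}{2\pi}$ the diagonal pairs commute, the off-diagonal pairs give $[X_1,Y_2]=(q\inv-q)K$ and $[X_2,Y_1]=(q-q\inv)K\inv$, summing to $-(q-q\inv)(K-K\inv)$, which is the correct rescaled form of $[E,F]=\frac{K-K\inv}{q-q\inv}$ under \eqref{smallef} since $q-q\inv=2\bi\sin(\pi b^2)$; the $K$-covariances are the single Weyl phase $e^{2\pi\bi b^2}=q^2$ on the common factor $e^{-2\pi bp}$ (resp.\ $e^{2\pi bp}$). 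Your appeal to the sum of two $q^2$-commuting positive operators being positive essentially self-adjoint, with $(X_1+X_2)^{1/b^2}=X_1^{1/b^2}+X_2^{1/b^2}$, is precisely Lemma \ref{qbi} of the paper (cited there to \cite{Ip3,Vo}), so quoting it rather than reproving it is appropriate. Two cosmetic points: your phrase ``Weyl phases $e^{\pm\bi\pi b^2}$'' conflates the reordering phase $q^{\mp2}$ with the BCH half-commutator phase $q^{\pm1}$, though your stated end result $\mp2\bi\sin(\pi b^2)e^{A+B}$ is right; and the substitution $u\mapsto u+\l$ is to be applied to the formulas of the Proposition to recover \eqref{FF}--\eqref{EE}, not the other way around. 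Neither affects correctness.
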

\begin{Prop}\cite{Ip2}\label{typeA2} The positive representation $\cP_\l$ of $\cU_{q\til{q}}(\sl(3,\R))$ with parameters $\l=(\l_1,\l_2)$, corresponding to the reduced expression $w_0=s_2s_1s_2$, acting on $f(u,v,w)\in L^2(\R^3)$, is given by
\Eqn{
\be_1=&[v-w]e(-p_v)+[u]e(-p_v+p_w-p_u),\\
\be_2=&[w]e(-p_w),\\
\bf_1=&[-v+u-2\l_1]e(p_v),\\
\bf_2=&[-2u+v-w-2\l_2]e(p_w)+[-u-2\l_2]e(p_u),\\
K_1=&e^{-\pi b(-u+2v-w+2\l_1)},\\
K_2=&e^{-\pi b(2u-v+2w+2\l_2)}.
}
\end{Prop}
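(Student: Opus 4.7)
The proof is a direct verification that the given operators satisfy the defining relations of $\cU_q(\sl(3))$, are positive essentially self-adjoint, and are compatible with the modular double structure. The workhorse identity is the Weyl commutation $e^{\alpha u_k}e^{\beta p_k}=e^{\alpha\beta/(2\pi\bi)}e^{\beta p_k}e^{\alpha u_k}$ obtained from $[u_k,p_k]=\frac{1}{2\pi\bi}$, from which one derives the $q$-commutation of the bracketed exponentials $[x]e(-p)$ under arbitrary additive shifts of $x$ by constants or by the other coordinate variables.

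I would verify the relations in order of increasing difficulty. First, $K_iK_j=K_jK_i$ is immediate from the abelian nature of the exponents, and the scaling relations $K_i\be_j K_i\inv=q_i^{a_{ij}}\be_j$, $K_i\bf_j K_i\inv=q_i^{-a_{ij}}\bf_j$ are read off directly by pairing the coefficients of $u,v,w$ in each $K_i$ against the coefficients of $p_u,p_v,p_w$ appearing in $\be_j,\bf_j$. Next, the mixed commutator $[\be_i,\bf_j]$ must vanish for $i\neq j$ and equal the appropriately rescaled $\frac{K_i-K_i\inv}{q_i-q_i\inv}$ for $i=j$ after accounting for the $2\sin(\pi b_i^2)$ factors in \eqref{smallef}; expanding each product into a sum of exponentials, the cross terms cancel for $i\neq j$, while for $i=j$ the mixed terms cancel and the diagonal ones telescope. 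The specific shifts by $-2\l_i$ in the $\bf_i$ and the particular two-term form of $\be_1$ are forced by exactly these cancellations. Finally, the Serre relations
\Eq{\be_i^2\be_j-(q_i+q_i\inv)\be_i\be_j\be_i+\be_j\be_i^2=0,\notag}
together with the analogue for $\bf_i,\bf_j$, must be checked: one brings all exponentials into a common ordering via the Weyl relation, and the accumulated $q$-phases combine with the trinomial coefficients $1,-(q_i+q_i\inv),1$ to cancel after symmetric rearrangement.

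Positive essential self-adjointness of each generator is a standard consequence of the fact that each is a sum of at most three exponentials of self-adjoint linear combinations of the $u_k,p_k$, with consecutive summands $q$-commuting as $AB=q_i^2BA$ with $A,B>0$; for such sums the positive essentially self-adjoint extension is well defined on $L^2(\R^3)$. The transcendental relations \eqref{transdef} are then automatic: $[x]=e^{\pi bx}+e^{-\pi bx}$ satisfies $[x]^{1/b^2}=e^{\pi x/b}+e^{-\pi x/b}$ in the functional calculus, so raising each generator to $1/b_i^2$ amounts to swapping $b\leftrightarrow b\inv$ throughout and produces the dual algebra $\cU_{\til q}(\sl(3,\R))$. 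The main obstacle is managing the Serre relations cleanly: since $\be_1$ already contains two exponential terms, $\be_1^2\be_2$ expands into six summands whose recombination into a vanishing sum requires careful bookkeeping of the $q$-phase factors produced by each application of the Weyl relation, and in particular uses crucially that $a_{12}=a_{21}=-1$ and $(\alpha_i,\alpha_i)=2$ so that $q_i=q$ throughout.
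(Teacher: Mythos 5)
The paper offers no proof of this Proposition at all: it is recalled verbatim from \cite{Ip2}, and within the present paper the formulas are understood as the specialization of the general expressions \eqref{FF}--\eqref{EE} to $\g=\sl(3)$ with $w_0=s_2s_1s_2$ (the two-term form of $\be_1$ coming from the unitary equivalence with the reduced word $s_1s_2s_1$, since \eqref{EE} only applies to the simple reflection appearing last). Your direct-verification route — checking the $K$-weights, the $[\be_i,\bf_j]$ relations, the Serre relations, positivity, essential self-adjointness via $q^2$-commuting summands, and the transcendental relations via the $q$-binomial identity of Lemma \ref{qbi} — is a legitimate and self-contained alternative, and is in fact how the formulas are ultimately justified in \cite{Ip2}; what the paper's route buys is a uniform derivation for all reduced words and all types, while yours buys independence from the general construction at the cost of heavier bookkeeping. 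For the Serre relations you would do better to follow the mechanism of Proposition \ref{eij}: form the $q$-commutator $\be_{12}$ explicitly (a sum of four positive exponentials) and check that it $q$-commutes with $\be_1$ and $\be_2$, which is equivalent to, and much cleaner than, expanding the trinomial identity term by term.

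Three small inaccuracies to repair. First, $\be_1$ and $\bf_2$ each expand into \emph{four} exponential summands, not ``at most three''; the essential self-adjointness argument still goes through because, in a suitable ordering, consecutive summands satisfy $AB=q^2BA$, but that ordering has to be exhibited. Second, the statement ``$[x]^{1/b^2}=e^{\pi x/b}+e^{-\pi x/b}$'' is false for the bare multiplication operator $2\cosh(\pi bx)$; the identity $(U+V)^{1/b^2}=U^{1/b^2}+V^{1/b^2}$ of Lemma \ref{qbi} requires $UV=q^2VU$, which holds only for the full operators $[x]e(\pm p)$ with the momentum shift bundled in. You use the correct mechanism elsewhere, but as written this sentence would not survive scrutiny. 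Third, your verification establishes that the operators define \emph{a} positive representation satisfying the transcendental relations; identifying it with the representation $\cP_\l$ of the general construction (the content of the citation to \cite{Ip2}) requires the additional unitary-equivalence argument, which a pure relation check does not supply.
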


\begin{Prop}\cite{Ip3} The positive representation  $\cP_\l$ of $\cU_{q\til{q}}(\g_\R)$ with parameters $\l=(\l_1,\l_2)$, where $\g_\R$ is of type $B_2$, corresponding to the reduced expression $w_0=s_1s_2s_1s_2$, acting on $f(t,u,v,w)\in L^2(\R^4)$, is given by
\Eqn{
\be_1=&[t]e(-p_t-p_u+p_w)+[u-v]e(-p_u-p_v+p_w)+[v-w]e(-p_v),\\
\be_2=&[w]e(-p_w),\\
\bf_1=&[2\l_1-t]e(p_t)+[2\l_1-2t+u-v]e(p_v),\\
\bf_2=&[2\l_2+2t-u]e(p_u)+[2\l_2+2t-2u+2v-w]e(p_w),\\
K_1=&e^{\pi b_s(2\l_1-2t-2v)}e^{\pi b(u+w)},\\
K_2=&e^{\pi b(2\l_2-2u-2w)}e^{\pi b_s(2t+2v)}.
}
In this case (cf. Definition \ref{usul}), $u_s$ are linear combinations of $\{t,v\}$, while $u_l$ are linear combinations of $\{u,w\}$. Similarly for $p_s$ and $p_l$. 
\end{Prop}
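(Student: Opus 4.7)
The proof is a direct verification of three things: (i) each of $\be_i, \bf_i, K_i$ is positive and essentially self-adjoint on $L^2(\R^4)$; (ii) the $\cU_q(B_{2,\R})$ relations hold with $q_1 = q^{1/2}$ (since $\a_1$ is short) and $q_2 = q$; (iii) the transcendental relations $\til{\be_i} = \be_i^{1/b_i^2}$ etc.\ hold, which automatically yields an action of the Langlands dual $\cU_{\til{q}}(C_{2,\R})$. Everything reduces to the canonical commutation relations $[u_\bullet, p_\bullet] = (2\pi\bi)\inv$ among the canonical pairs $t,u,v,w$ on $L^2(\R^4)$, which produce Weyl-type $q$-commutations among the elementary exponentials $e^{\pi b_\bullet(\,\cdot\,)}$ appearing in the formulas, and similarly for their self-dual counterparts $e^{\pi b_\bullet\inv(\,\cdot\,)}$.

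\textbf{Positivity and the $K$-$\be$, $K$-$\bf$, and $[\be,\bf]$ relations.} Each summand $[u_s + u_l]e(-p_s - p_l)$ is a sum of two self-adjoint Weyl exponentials whose commutator is an imaginary scalar in the admissible range, so it is positive essentially self-adjoint by the standard Faddeev-type lemma on sums of $q^2$-commuting positive exponentials. For the multi-term generators $\be_1, \bf_1, \bf_2$, I check pairwise that consecutive summands again satisfy a $q^2$-Weyl commutation, so the full sum is positive essentially self-adjoint by iterating the same lemma. The relations $K_i \be_j K_i\inv = q_i^{a_{ij}}\be_j$ and $K_i \bf_j K_i\inv = q_i^{-a_{ij}}\bf_j$ are a handful of BCH identities with linear exponents: one reads off the $B_2$ Cartan entries $(a_{12}, a_{21}) = (-2, -1)$ from the coefficients in the formulas and confirms each commutation. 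For $[\be_i, \bf_j]$ with $i \neq j$ each cross-pair of summands commutes exactly (the exponents commute); for $i = j$ a telescoping cancellation between adjacent summands leaves exactly $(K_i - K_i\inv)/(q_i - q_i\inv)$ after the factor $4\sin^2(\pi b_i^2) = -(q_i - q_i\inv)^2$ is absorbed into the normalization $\be_i = 2\sin(\pi b_i^2)E_i$.

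\textbf{Serre relations — the main obstacle.} For $B_2$ there are two Serre identities in each of $\be$ and $\bf$: cubic in $\be_1$ of degree $1 - a_{12} = 3$, and quadratic in $\be_2$ of degree $1 - a_{21} = 2$. Since $\be_1$ has three Weyl-exponential summands, the cubic combination $\be_1^3 \be_2 - [3]_{q_1}\be_1^2 \be_2 \be_1 + [3]_{q_1}\be_1 \be_2 \be_1^2 - \be_2\be_1^3$ expands into $27$ ordered monomials of four exponentials; each normal-orders via the Weyl commutations to a canonical product times an explicit $q_1$-scalar. Grouping by normal-ordered class, the cancellation reduces to the $q$-binomial identity that underlies the Serre relation for $\cU_q(\sl_2)$. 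The nontrivial input is that the $q_1$-scalars produced by the Weyl factors match exactly the exponents demanded by this identity, which is precisely where the combinatorics of the reduced expression $w_0 = s_1s_2s_1s_2$, the adjacency of the three summands of $\be_1$, and the specific coefficients in their exponents enter. The dual Serre relations in $\bf_1, \bf_2$ are verified the same way (with the $\l_i$ contributing additive shifts that drop out). Finally, item (iii) is essentially automatic: raising each elementary exponential $e^X$ to the $1/b_i^2$-th power replaces $b_i$ by $b_i\inv$; the Langlands dual ${}^L B_2 \cong C_2$ swaps the short/long roles; and weak commutativity up to a sign between the two copies follows from the standard identity $e^{\pi b u}e^{\pi b\inv p} = -e^{\pi b\inv p}e^{\pi b u}$ for canonical $u,p$.
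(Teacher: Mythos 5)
This proposition is quoted from \cite{Ip3} and the present paper gives no proof of it, so the only available benchmark is the construction in that reference; your plan --- direct verification of positivity, the $B_2$ relations, and the transcendental relations from the Weyl commutation relations of the exponential summands --- is the natural approach and is indeed how the cited work proceeds. The bookkeeping you set up is correct: $(a_{12},a_{21})=(-2,-1)$ with $\a_1$ short, $q_1=q^{1/2}$, the degrees $3$ and $2$ of the two Serre relations, and the normalization identity $4\sin^2(\pi b_i^2)=-(q_i-q_i\inv)^2$. I also checked that the cancellation of cross-commutators in $[\be_1,\bf_2]$ works because $b_s^2=b^2/2$ makes the short-root and long-root contributions to the exponent commutators cancel exactly, which supports your claim that the mixed pairs commute.

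There are, however, two places where the proposal asserts rather than proves precisely the points that carry the content. First, the Serre relations: saying the $27$-term expansion ``reduces to the $q$-binomial identity that underlies the Serre relation for $\cU_q(\sl_2)$'' is not a proof, because the monomials do not all normal-order into a single canonical class --- products of different summands of $\be_1$ carry different momentum shifts, so the cancellation must be organized class by class, and whether the accumulated powers of $q_1$ from the Weyl factors match the $[3]_{q_1}$ coefficients is exactly what has to be computed. Second, and more seriously, the transcendental relations in the non-simply-laced case are not ``essentially automatic.'' A summand of $\be_1$ has exponent $\pi b_s X+\pi b_l Y$ with both scales present; raising it to the power $1/b_s^2$ gives exponent $\pi b_s\inv X+2\pi b_l\inv Y$, with a factor of $2$ on the long-root part. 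This is precisely why the dual generators close on $\cU_{\til{q}}(C_2)={}^L B_2$ with the Serre degrees interchanged ($a_{12}\leftrightarrow a_{21}$), and verifying that they satisfy the $C_2$ relations is a separate computation, not a substitution $b\mapsto b\inv$; this is the main new phenomenon of \cite{Ip3}. Two smaller points: for essential self-adjointness of a multi-term generator you need each summand to $q^2$-commute with \emph{all} subsequent summands so that Lemma \ref{qbi} can be iterated (compare the proof of Proposition \ref{eij}), not merely with the consecutive one; and your weak-commutativity identity as written, $e^{\pi b u}e^{\pi b\inv p}=-e^{\pi b\inv p}e^{\pi b u}$, actually produces a factor $e^{-\pi\bi/2}=-\bi$ for canonical $[u,p]=(2\pi\bi)\inv$ --- the sign $-1$ arises for the correctly scaled exponentials appearing in the generators, so the factors of $2$ need care here too.
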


We will omit the case of type $G_2$ for simplicity.

\subsection{Quantum dilogarithm $G_b(x)$ and $g_b(x)$}\label{sec:prelim:qdlog}
First introduced by Faddeev \cite{Fa1, Fa2}, the quantum dilogarithm $G_b(x)$ and its variants $g_b(x)$ play a crucial role in the study of positive representations of split real quantum groups, and also appear in many other areas of mathematics and physics. In this subsection, let us recall the definition and some properties of the quantum dilogarithm functions \cite{BT, Ip1, PT2} that is needed in the calculations in this paper.

\begin{Def} The quantum dilogarithm function $G_b(x)$ is defined on\\ ${0\leq Re(z)\leq Q}$ by
\Eq{\label{intform} G_b(x)=\over[\ze_b]\exp\left(-\int_{\W}\frac{e^{\pi tz}}{(e^{\pi bt}-1)(e^{\pi b\inv t}-1)}\frac{dt}{t}\right),}
where \Eq{\ze_b=e^{\frac{\pi \bi }{2}(\frac{b^2+b^{-2}}{6}+\frac{1}{2})},}
and the contour goes along $\R$ with a small semicircle going above the pole at $t=0$. This can be extended meromorphically to the whole complex plane with poles at $x=-nb-mb\inv$ and zeros at $x=Q+nb+mb\inv$, for $n,m\in\Z_{\geq0}$;
\end{Def}

The quantum dilogarithm $G_b(x)$ satisfies the following properties:
\begin{Prop} Self-Duality:
\Eq{G_b(x)=G_{b\inv}(x);\label{selfdual}}

Functional equations: \Eq{\label{funceq}G_b(x+b^{\pm 1})=(1-e^{2\pi \bi b^{\pm 1}x})G_b(x);}

Reflection property:
\Eq{\label{reflection}G_b(x)G_b(Q-x)=e^{\pi \bi x(x-Q)};}

Complex Conjugation: \Eq{\overline{G_b(x)}=\frac{1}{G_b(Q-\bar{x})},\label{Gbcomplex}}
in particular \Eq{\label{gb1}\left|G_b(\frac{Q}{2}+\bi x)\right|=1 \mbox{ for $x\in\R$};}

\label{asymp} Asymptotic Properties:
\Eq{G_b(x)\sim\left\{\begin{array}{cc}\bar{\ze_b}&Im(x)\to+\oo\\\ze_b
e^{\pi \bi x(x-Q)}&Im(x)\to-\oo\end{array},\right.}
\end{Prop}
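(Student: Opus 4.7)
The plan is to derive all five items from the integral representation \eqref{intform}. The structural properties follow by inspection: self-duality \eqref{selfdual} since the integrand is symmetric in $b\leftrightarrow b^{-1}$ and $\zeta_b$ depends only on $b^{2}+b^{-2}$; the functional equation \eqref{funceq} by forming $\log G_b(x+b)-\log G_b(x)$, cancelling the factor $e^{\pi b t}-1$ in the denominator, and deforming the remaining $\int_\Omega\frac{e^{\pi t x}}{t(e^{\pi b^{-1}t}-1)}dt$ to $+i\infty$ to pick up the residues at $t=2ikb$, $k\geq 1$, which assemble into the series $-\log(1-e^{2\pi i b x})$; the reflection \eqref{reflection} by observing that both sides satisfy the same $b^{\pm1}$-shifts from \eqref{funceq} and agree at $x=Q/2$; complex conjugation \eqref{Gbcomplex} by conjugating the defining integral and substituting $t\mapsto -t$, then identifying the reciprocal with $G_b(Q-\bar x)\inv$ via \eqref{reflection}; and \eqref{gb1} as the specialisation $x=Q/2+iy$ of \eqref{Gbcomplex}.

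For the asymptotic as $\operatorname{Im}(x)\to+\infty$, I would first restrict to the strip $0<\operatorname{Re}(x)<Q$ where the integrand $h(t):=\frac{e^{\pi t x}}{t(e^{\pi b t}-1)(e^{\pi b^{-1} t}-1)}$ is integrable on $\mathbb{R}\setminus\{0\}$, and deform $\Omega$ upward to a horizontal line $\operatorname{Im}(t)=R_n$ chosen to avoid poles, with $R_n\to\infty$. The horizontal remainder vanishes because $|e^{\pi t x}|=e^{\pi\operatorname{Re}(t)\operatorname{Re}(x)-\pi R_n\operatorname{Im}(x)}$ decays exponentially in $R_n$ while the denominator growth is controlled by the strip condition $0<\operatorname{Re}(x)<Q$. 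The enclosed poles are simple and lie on the imaginary axis at $t=2ik/b$ and $t=2ikb$ for $k\geq 1$, all distinct thanks to $b^{2}\notin\mathbb{Q}$. Residue calculus then gives
\Eqn{
-\int_\Omega h(t)\,dt=-\sum_{k\geq 1}\left[\frac{e^{2\pi i k x/b}}{k(e^{2\pi i k/b^2}-1)}+\frac{e^{2\pi i k b x}}{k(e^{2\pi i k b^2}-1)}\right],
}
and each summand has modulus $O(e^{-2\pi k\operatorname{Im}(x)/b})$ or $O(e^{-2\pi k b\operatorname{Im}(x)})$, so the entire series tends to $0$ as $\operatorname{Im}(x)\to+\infty$, yielding $G_b(x)\to\overline{\zeta_b}$.

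The opposite asymptotic is then immediate from reflection: $G_b(x)=e^{\pi i x(x-Q)}/G_b(Q-x)$, and as $\operatorname{Im}(x)\to-\infty$ we have $\operatorname{Im}(Q-x)\to+\infty$, so $G_b(Q-x)\to\overline{\zeta_b}$; since $|\zeta_b|=1$, we obtain $G_b(x)\sim\zeta_b\,e^{\pi i x(x-Q)}$. The main obstacle is justifying the upward contour deformation: although the poles accumulate only at $+i\infty$, the denominators $|e^{2\pi i k/b^2}-1|$ and $|e^{2\pi i k b^2}-1|$ may be small for specific $k$, producing a mild small-denominator phenomenon tied to the irrationality of $b^2$. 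For the pointwise asymptotic the exponential numerator dominates any fixed $k$-dependent denominator, so no Diophantine refinement beyond $b^2\notin\mathbb{Q}$ is needed; sharper uniform asymptotics in a wider region would however call for a Diophantine bound on $b^2$.
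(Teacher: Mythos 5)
The paper itself offers no proof of this Proposition; it is recalled as standard material from the quantum dilogarithm literature (Faddeev--Kashaev, Bytsko--Teschner, Ponsot--Teschner), so a derivation from the integral representation \eqref{intform} is the right thing to attempt, and most of yours is sound: self-duality, the functional equation (cancelling $e^{\pi bt}-1$ and collecting the residues at $t=2\bi kb$, $k\geq 1$, into $-\log(1-e^{2\pi\bi bx})$ for $\mathrm{Im}(x)>0$, then continuing analytically), and complex conjugation via $t\mapsto -t$ are all correct. Two places need more care. For the reflection property, ``same shift equations and agreement at $x=Q/2$'' is not by itself conclusive: you should note that the ratio $G_b(x)G_b(Q-x)e^{-\pi\bi x(x-Q)}$ is invariant under the shift group $\Z b+\Z b\inv$, which is dense in $\R$ precisely because $b^2\notin\Q$, and then invoke analyticity; moreover the value $G_b(Q/2)^2=e^{-\pi\bi Q^2/4}$ is itself a computation (at $x=Q/2$ the integrand is odd in $t$, so only the half-residue at $t=0$ survives, and that residue is exactly what the prefactor $\over[\ze_b]$ is built to absorb) --- skipping it leaves the reflection formula, and the role of $\ze_b$, unpinned.

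The genuine gap is in the $\mathrm{Im}(x)\to+\oo$ asymptotic. Pushing the contour to $+\bi\oo$ and keeping the full residue series gives $\sum_{k\geq1}e^{2\pi\bi kbx}/\bigl(k(e^{2\pi\bi kb^2}-1)\bigr)$ plus its dual, and for fixed $x$ this series need not converge absolutely: if $b^2$ is a Liouville number, $|e^{2\pi\bi kb^2}-1|\inv$ can exceed $e^{k^2}$ along a subsequence and overwhelm the geometric factor $e^{-2\pi kb\,\mathrm{Im}(x)}$, so your claim that ``the exponential numerator dominates any fixed $k$-dependent denominator'' does not let you send the whole sum to zero; the paper assumes only $b^2\notin\Q$, with no Diophantine condition, so the argument must not require one. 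The repair is simpler than what you propose: shift the contour exactly once, to a fixed horizontal line $\mathrm{Im}(t)=R$ with $0<R<2\min(b,b\inv)$, which crosses no poles at all. On that line $|e^{\pi tx}|=e^{\pi(\mathrm{Re}(t)\mathrm{Re}(x)-R\,\mathrm{Im}(x))}$, and the strip condition $0<\mathrm{Re}(x)<Q$ makes the remaining factor integrable uniformly in $x$, so the entire integral is $O(e^{-\pi R\,\mathrm{Im}(x)})\to0$ and $G_b(x)\to\over[\ze_b]$ directly; your deduction of the $\mathrm{Im}(x)\to-\oo$ behaviour from the reflection property is then fine.
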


\begin{Lem}[$q$-binomial theorem]\label{qbinom}
For positive self-adjoint variables $U,V$ with $UV=q^2VU$, we have:
\Eq{(U+V)^{\bi b\inv t}=\int_{C}\veca{\bi t\\\bi \t}_b U^{\bi b\inv (t-\t)}V^{\bi b\inv \t}d\t ,}
where the $q$-beta function (or $q$-binomial coefficient) is given by
\Eq{\veca{t\\\t}_b=\frac{G_b(-\t)G_b(\t-t)}{G_b(-t)},}
and $C$ is the contour along $\R$ that goes above the pole at $\t=0$
and below the pole at $\t=t$.
\end{Lem}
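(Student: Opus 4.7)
The identity is the non-commutative Mellin--Barnes (quantum Ramanujan beta) integral representation, obtained from the classical Euler beta identity $(1+x)^s = \frac{1}{\Gamma(-s)}\int \Gamma(-\t)\Gamma(\t-s)\, x^\t\, d\t$ by replacing $\Gamma$ with $G_b$. I plan to establish it in three stages: reduce to a concrete Weyl realization of $(U,V)$; prove the underlying scalar contour identity using the functional equation and asymptotics of $G_b$; and promote the scalar identity to operators via spectral calculus and Weyl reordering.

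By the uniqueness of positive representations of the Weyl pair $UV=q^2VU$ with $|q|=1$ (the non-compact analog of Stone--von Neumann), I may take $(U,V)=(e^{2\pi b\hat x},\,e^{2\pi b\hat p})$ on $L^2(\R)$, with $[\hat x,\hat p]=(2\pi\bi)\inv$, up to unitary equivalence and multiplicity. The key scalar identity to prove is
\[
(1+e^{2\pi b z})^{\bi b\inv t} = \int_C \frac{G_b(-\t)G_b(\t-t)}{G_b(-t)}\, e^{2\pi\bi\t z}\, d\t, \qquad z\in\R,
\]
which I would establish via: (i) a first-order $q$-difference recursion in $t$ satisfied by both sides --- the shift $t\mapsto t+b$ multiplies the LHS by $(1+e^{2\pi b z})$, and the RHS reproduces the same factor after applying $G_b(x+b)=(1-e^{2\pi\bi b x})G_b(x)$ and deforming $C$ across the pole picked up at $\t=t$; (ii) a Carlson-type uniqueness argument, justified by the asymptotics $G_b(x)\sim\bar{\ze_b}$ (resp.\ $\ze_b e^{\pi\bi x(x-Q)}$) as $\mathrm{Im}(x)\to+\infty$ (resp.\ $-\infty$), which provide Gaussian damping along one direction and guarantee absolute convergence along horizontal contours; (iii) normalization at $t=0$, where the reflection formula $G_b(x)G_b(Q-x)=e^{\pi\bi x(x-Q)}$ collapses the integrand into a delta-function representation of $1$.

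To obtain the stated operator identity, I would apply functional calculus to the positive self-adjoint element $W:=VU\inv$ (which satisfies $UW=q^2WU$), yielding
\[
(1+W)^{\bi b\inv t} = \int_C \veca{\bi t\\\bi \t}_b W^{\bi b\inv \t}\, d\t.
\]
Using $U+V=(1+W)U$, the Weyl commutation to reorder $W^{\bi b\inv\t}$ past $U^{\bi b\inv t}$, and the analytically-continued power identity $W^{\bi b\inv\t}=q^{-\bi b\inv\t(\bi b\inv\t-1)}\,V^{\bi b\inv\t}U^{-\bi b\inv\t}$ (obtained by iterating $UV=q^2VU$), I would recast the integrand into the stated ordered form $U^{\bi b\inv(t-\t)}V^{\bi b\inv\t}$; the $q$-phases conspire to leave the kernel $\veca{\bi t\\\bi\t}_b$ intact. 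The main obstacle is that $((1+W)U)^{\bi b\inv t}$ does \emph{not} factor as $(1+W)^{\bi b\inv t}U^{\bi b\inv t}$, since $1+W$ and $U$ do not commute. Rather than splitting the complex power directly, I would verify the full identity by matrix elements, testing both sides against the generalized (Whittaker-type) eigenvectors of $U+V$ in the Weyl representation, whose Mellin transforms in $\hat x$ are explicit ratios of $G_b$; the resulting equality of kernels reduces precisely to the scalar identity above. The contour $C$ (above $\t=0$, below $\t=t$) is exactly the one prescribed by Mellin inversion for these kernels, and convergence along it is controlled by the $G_b$ asymptotics.
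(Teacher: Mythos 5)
First, a point of comparison: the paper does not prove this lemma at all --- it is recalled from \cite{BT,Ip1,PT2} as a known property of $G_b$, and the proof technique implicit in those references (and used by the paper itself in the closely analogous Proposition \ref{interchange}) is to expand the quantum exponential relation $g_b(U)g_b(V)=g_b(U+V)$ of \eqref{qsum0} through the Fourier representation of Lemma \ref{FT} and ``equate the powers'' of $U$ and $V$, i.e.\ perform a Mellin inversion in a scaling parameter $\mu$ applied to $g_b(\mu U)g_b(\mu V)=g_b(\mu(U+V))$. Your route --- reduce to the Schr\"odinger realization of the Weyl pair, prove the commutative Mellin--Barnes identity for $(1+e^{2\pi bz})^{\bi b\inv t}$ by difference equations in $t$ plus a uniqueness argument and the $t\to 0$ normalization, then lift to operators --- is genuinely different in architecture. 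The scalar part is essentially sound: the recursion/uniqueness/normalization scheme is the standard way such $G_b$ integrals are established (your step (iii) is exactly what Proposition \ref{delta} formalizes), modulo one bookkeeping slip: the relevant shift is $\bi t\mapsto \bi t+b$, i.e.\ $t\mapsto t-\bi b$, which is what multiplies the left side by $(1+e^{2\pi bz})$; the shift $t\mapsto t+b$ you wrote multiplies it by $(1+e^{2\pi bz})^{\bi}$.

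The genuine gap is in the operator-lifting step, which is where the content of the lemma actually lives. Two issues. First, $W=VU\inv$ is not a positive operator: in the Weyl realization $VU\inv=q^{\pm 1}e^{2\pi b(\hat p-\hat x)}$, so you must work with $q\inv VU\inv$ or $U^{-\half}VU^{-\half}$ before $W^{\bi b\inv\t}$ is defined by functional calculus; the unimodular phase you drop is of the same size as the phases you later claim ``conspire to leave the kernel intact,'' so it cannot be ignored. Second, and more seriously, you correctly observe that $((1+W)U)^{\bi b\inv t}\neq(1+W)^{\bi b\inv t}U^{\bi b\inv t}$, but the proposed repair --- testing both sides against generalized eigenvectors of $U+V$ --- is asserted rather than executed, and the claim that the resulting kernel identity ``reduces precisely to the scalar identity above'' is not justified; computing the mixed matrix elements of $U^{\bi b\inv(t-\t)}V^{\bi b\inv\t}$ between eigenbases of $U+V$ and of $\hat x$ is a nontrivial calculation that would more plausibly land on the tau-beta integral of Lemma \ref{tau} or the $3$-$2$ relation of Proposition \ref{32} than on your commutative identity. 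A cleaner and shorter finish, consistent with the paper's own logical structure, is to take \eqref{qsum0} as known, expand both sides of $g_b(\mu U)g_b(\mu V)=g_b(\mu U+\mu V)$ via Lemma \ref{FT}, and extract the coefficient of $\mu^{\bi b\inv t}$; the linear independence of the Weyl operators $U^{\bi b\inv s}V^{\bi b\inv s'}$ then yields the stated kernel after an application of the reflection formula \eqref{reflection}. As it stands, your argument proves the commutative case and outlines, but does not establish, the non-commutative one.
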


\begin{Lem}[tau-beta theorem] \label{tau}We have
\Eq{\int_C e^{-2\pi \t \b}
\frac{G_b(\a+\bi \t)}{G_b(Q+\bi \t)}d\t =\frac{G_b(\a)G_b(\b)}{G_b(\a+\b)},}
where the contour $C$ goes along $\R$ and goes above the poles of
$G_b(Q+\bi \t)$ and below those of $G_b(\a+\bi \t)$. By the asymptotic properties of $G_b$, the integral converges for $Re(\b)>0, Re(\a+\b)<Q$.
\end{Lem}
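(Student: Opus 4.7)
The plan is to prove this classical identity by combining the functional equation of $G_b$ with a uniqueness step, then pinning down the multiplicative constant by one explicit evaluation.

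First I would verify convergence of the integral in the stated region. Using the asymptotics $G_b(x)\sim\bar{\ze_b}$ as $Im(x)\to+\oo$ and $G_b(x)\sim\ze_b e^{\pi\bi x(x-Q)}$ as $Im(x)\to-\oo$, the ratio $G_b(\a+\bi\t)/G_b(Q+\bi\t)$ tends to $1$ as $\t\to+\oo$ along $\R$, so the weight $e^{-2\pi\t\b}$ gives decay provided $Re(\b)>0$. As $\t\to-\oo$ the quadratic exponents from the two $G_b$-factors cancel and the ratio grows only like $e^{2\pi\t(Q-\a)}$ (up to a constant), so combined with $e^{-2\pi\t\b}$ the integrand decays provided $Re(\a+\b)<Q$. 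These estimates are also uniform enough to justify contour deformations performed later.

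Denote the LHS and RHS by $L(\a,\b)$ and $\Phi(\a,\b)$. Applying the functional equation $G_b(x+b)=(1-e^{2\pi\bi bx})G_b(x)$ to $x=\a+\bi\t$ and absorbing the resulting factor $e^{-2\pi b\t}$ into the exponential weight, I obtain the mixed recursion
\Eqn{L(\a+b,\b)=L(\a,\b)-e^{2\pi\bi b\a}L(\a,\b+b).}
A direct algebraic check on the three $G_b$-factors in $\Phi$, using the same functional equation on each, shows that $\Phi$ obeys the identical recursion. By self-duality $G_b=G_{b\inv}$, the analogous recursion holds with $b$ replaced by $b\inv$. Consequently $D:=L-\Phi$ satisfies both relations homogeneously.

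To conclude $D\equiv 0$, I would iterate the two recursions and eliminate the $\b$-shifts by taking suitable combinations, upgrading the mixed recursion into a genuine periodicity of $D/\Phi$ with the two incommensurate periods $b$ and $b\inv$. Together with the asymptotic bounds on $L$ established above, this makes $D/\Phi$ a bounded meromorphic function of $\a$ in the strip of convergence with dense period group (since $b^2\notin\Q$), so a Liouville-type argument identifies it with a scalar. The scalar is fixed at $\a=b\inv$, where the functional equation gives $G_b(Q+\bi\t)=(1-e^{-2\pi b\t})G_b(b\inv+\bi\t)$, reducing $L(b\inv,\b)$ to the elementary contour integral $\int_C e^{-2\pi\t\b}/(1-e^{-2\pi b\t})\,d\t$, which is evaluated by residue summation and matched with $\Phi(b\inv,\b)$ using the reflection formula \eqref{reflection}.

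The main obstacle is the uniqueness step: converting the tight but mixed recursion for $D$ into honest double-periodicity requires careful bookkeeping of the $\b$-shifts and of the poles encountered when $\a$ is analytically continued past the singularities of $G_b(\a+\bi\t)$, which in turn forces corresponding deformations of the contour $C$. An alternative, potentially cleaner route is to deduce the identity directly from the $q$-binomial theorem (Lemma \ref{qbinom}) by computing a matrix coefficient between plane-wave eigenstates of a canonical pair $(U,V)$ with $UV=q^2VU$; the $\t$-integral then appears verbatim as the tau-beta integral after identifying $\veca{\bi t\\\bi\t}_b$ with the quotient of $G_b$-factors, bypassing the periodicity argument entirely.
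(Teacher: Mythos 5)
The paper does not prove this lemma at all: it is recalled verbatim from the literature (\cite{BT,Ip1,PT2}) as part of the toolbox of $G_b$-identities, so there is no internal proof to compare against and your attempt has to stand on its own. Your convergence analysis is correct, and the mixed recursion $L(\a+b,\b)=L(\a,\b)-e^{2\pi\bi b\a}L(\a,\b+b)$, its verification for $\Phi$, and the normalization check at $\a=b\inv$ (where $G_b(b\inv)=-\bi b\inv$ from the residue of $G_b$ at $0$ and the reflection/shift equations) are all sound. The genuine gap is the uniqueness step. A single three-term relation mixing shifts in $\a$ and $\b$ does not pin down $L$ up to a multiple of $\Phi$: for \emph{any} function $P(\a,\b)$ that is $b$-periodic in both arguments, $P\Phi$ satisfies the identical recursion, so the solution space is huge and no amount of ``iterating and taking combinations'' of the $b$- and $b\inv$-recursions alone will collapse it. Writing $M=L/\Phi$, the recursion only yields $[M(\a+b,\b)-M(\a,\b)]\Phi(\a,\b)=e^{2\pi\bi b\a}[M(\a+b,\b)-M(\a,\b+b)]\Phi(\a,\b+b)$, which gives periodicity of $M$ in $\a$ only if you already know $M$ is independent of $\b$. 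The missing ingredient is a second, genuinely independent relation — in the standard treatments one shifts the integration contour $\t\to\t+\bi b^{\pm1}$ (using analyticity of the integrand in $\t$, not just the functional equation in $\a$) to produce a \emph{pure} first-order difference equation in $\b$ (or in $\a$) alone; only then does the two-incommensurate-periods Liouville argument apply. Without that contour-shift relation the argument does not close.

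Two further cautions. First, the Liouville step needs the growth of $L/\Phi$ to be controlled on a whole line free of poles, which requires tracking how the contour $C$ must be deformed as $\a$ is continued (you flag this but it is part of the same unresolved step). Second, your proposed shortcut via Lemma \ref{qbinom} is circular in this circle of papers: the $q$-binomial theorem for positive operators with $UV=q^2VU$ is itself \emph{derived from} the tau-beta integral (equivalently, from the integral Ramanujan formula for $G_b$), so you cannot use it as the starting point here.
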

Generalizing the delta distribution results from \cite[Cor 3.13]{Ip1}, we have the following
\begin{Prop}\label{delta} For $f(x)$ entirely analytic and rapidly decreasing (faster than any exponential) along the real direction, we have
\Eq{&\lim_{\e\to0}\int_\R \frac{G_b(\e+\bi x-mb-nb\inv)G_b(Q+mb+nb\inv-2\e)}{G_b(Q+\bi x-\e)}f(x)dx\\
=&\sum_{\substack{kb+lb\inv < mb+nb\inv\\ k,l>0}} a_{kl}f(-\bi (kb+lb\inv)),}
where the constants $a_{kl}$ is the reside of the integrand at $-\bi (kb+lb\inv)$.
\end{Prop}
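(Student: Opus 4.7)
The plan is to compute the left-hand side in closed form by applying the tau-beta theorem (Lemma~\ref{tau}) after a contour shift, extracting the $\e\to 0$ limit via the reflection formula \eqref{reflection}, and then expanding the answer as a polynomial in exponentials using the functional equation \eqref{funceq} to read off the delta-function contributions. Since handling a general analytic rapidly decreasing $f$ directly is awkward, I would first verify the identity on the dense family of exponentials $f(x) = e^{-2\pi\beta x}$ (for $\beta$ in a vertical strip where everything converges), and then extend by linearity using a Fourier--Laplace representation $f(x) = \int_{C_\beta}\til{f}(\beta)\,e^{-2\pi\beta x}d\beta$.

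For such a test function, the substitution $\tau = x+\bi\e$ turns the numerator into $G_b(2\e - mb - nb\inv + \bi\tau)$ and the denominator into $G_b(Q + \bi\tau)$. The shifted contour $\R+\bi\e$ can be deformed back to the tau-beta contour (the real axis with a semicircular indentation above $\tau=0$) without crossing any pole, because for small $\e$ the only pole in the strip $0\le\text{Im}(\tau)\le\e$ is the zero of $G_b(Q+\bi\tau)$ at $\tau=0$, which both contours pass above. Applying Lemma~\ref{tau} with $\alpha = 2\e - mb - nb\inv$ then gives
\begin{equation*}
\int_\R \frac{G_b(\e+\bi x-mb-nb\inv)}{G_b(Q+\bi x-\e)}e^{-2\pi\beta x}\,dx = e^{2\pi\bi\e\beta}\,\frac{G_b(2\e - mb - nb\inv)G_b(\beta)}{G_b(2\e - mb - nb\inv + \beta)}.
\end{equation*}
Multiplying through by $G_b(Q + mb + nb\inv - 2\e)$ and collapsing the product $G_b(2\e - mb - nb\inv)G_b(Q + mb + nb\inv - 2\e)$ into the pure phase $e^{\pi\bi(2\e-mb-nb\inv)(2\e-mb-nb\inv-Q)}$ via \eqref{reflection}, the $\e\to 0$ limit reduces to the finite quantity $e^{\pi\bi(mb+nb\inv)(mb+nb\inv+Q)}\,G_b(\beta)/G_b(\beta - mb - nb\inv)$.

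Iterating \eqref{funceq}, together with $e^{2\pi\bi m}=e^{2\pi\bi n}=1$ to eliminate cross terms, this ratio becomes a polynomial in $e^{2\pi\bi b\beta}$ and $e^{2\pi\bi b\inv\beta}$:
\begin{equation*}
\frac{G_b(\beta)}{G_b(\beta - mb - nb\inv)} = \prod_{j=1}^m\bigl(1 - e^{2\pi\bi b\beta}q^{-2j}\bigr)\prod_{l=1}^n\bigl(1 - e^{2\pi\bi b\inv\beta}\til{q}^{-2l}\bigr) = \sum_{k,l} c_{kl}\, e^{2\pi\bi(kb+lb\inv)\beta}.
\end{equation*}
Each monomial equals $f(-\bi(kb+lb\inv))$, so the limit matches the claimed right-hand side after identifying each $a_{kl}$ with $c_{kl}$ times the overall phase, over the range of $(k,l)$ on which $c_{kl}$ is nonzero. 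Reinserting the Fourier--Laplace representation then extends the identity to general $f$ in the stated class, provided $\til f$ decays fast enough along $C_\beta$ to justify commuting the $\e\to 0$ limit past the $\beta$-integration.

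The main obstacle is the matching of the coefficients $c_{kl}$ with the residues of the original integrand at $x=-\bi(kb+lb\inv)$, together with the uniformity needed for the final interchange of limits. For $\e>0$ the integrand has two families of simple poles, one from the numerator at $x=\bi\e - \bi(kb+lb\inv)$ and one from the zeros of the denominator at $x=-\bi\e - \bi(kb+lb\inv)$, which pinch the real contour in pairs as $\e\to 0$; the prefactor $G_b(Q+mb+nb\inv-2\e)=O(\e)$ (by \eqref{reflection} and the simple pole of $G_b$ at $0$) exactly cancels the $1/\e$ divergence from each pinch, while the residues of unpaired poles (those with $k>m$ or $l>n$) contribute only $O(\e)$ and drop out. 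The sign and normalization bookkeeping in this residue identification, together with the analytic justification of the contour manipulations in the density argument, form the principal technical content.
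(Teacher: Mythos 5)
The paper itself does not prove this statement---it is imported as a generalization of \cite[Cor 3.13]{Ip1}---so your argument has to stand alone, and its central step fails. After the substitution $\tau=x+\bi\e$ you invoke Lemma \ref{tau} with $\alpha=2\e-mb-nb^{-1}$, whose real part is negative. The tau-beta contour must pass \emph{below} every pole of $G_b(\alpha+\bi\tau)$, which sit at $\tau=\bi(2\e-mb-nb^{-1}+k'b+l'b^{-1})$, while staying \emph{above} the poles of $1/G_b(Q+\bi\tau)$ at $\tau=-\bi(kb+lb^{-1})$. For every $(k',l')$ with $k'b+l'b^{-1}<mb+nb^{-1}$ the first family lies in the lower half-plane, interlaced with the second, so the separating contour must weave around those points and is \emph{not} homotopic to $\R+\bi\e$ in the complement of the poles; your justification inspects only the strip $0\le\mathrm{Im}(\tau)\le\e$ and misses them entirely. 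The difference between $\int_{\R+\bi\e}$ and the tau-beta value is $2\pi\bi$ times a sum of residues, and these survive the limit: each carries a factor $1/G_b(Q+kb+lb^{-1}-2\e)$, which blows up like $\e^{-1}$ and cancels the $O(\e)$ prefactor $G_b(Q+mb+nb^{-1}-2\e)$. Hence your closed form, and the coefficients you then match to the $a_{kl}$, are off by finite amounts. As a concrete check, for $m=1$, $n=0$ your formula yields $-q^2f(0)+f(-\bi b)$, while factoring $G_b(\e+\bi x-b)=G_b(\e+\bi x)/(1-e^{2\pi\bi b(\e+\bi x-b)})$ and $G_b(Q+b-2\e)=(1-e^{2\pi\bi b(Q-2\e)})G_b(Q-2\e)$ exhibits the kernel as the $m=n=0$ delta kernel times a multiplier that is analytic and uniformly bounded near $\R$, so the integral over the literal contour $\R$ tends to $\frac{1-q^2}{1-q^{-2}}f(0)=q^2f(0)$.

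This points to the deeper issue: the masses at the complex points $-\bi(kb+lb^{-1})$ arise only because for $\e>0$ the integrand has a pole of the numerator at $-\bi(kb+lb^{-1})+\bi\e$ and a pole coming from the zero of the denominator at $-\bi(kb+lb^{-1})-\bi\e$, and the contour---which, as the proposition is actually used in Proposition \ref{interchange}, is understood to separate these two families---gets pinched between every such pair as $\e\to0$. Declaring the contour freely deformable past these pairs erases the very mechanism that produces the right-hand side. Your closing paragraph describes this pinching correctly, but it is never reconciled with the tau-beta computation you actually carry out, and the two give different coefficients. A repaired proof must first fix the contour convention and then either track the pinch at each coalescing pair directly (one residue from each pair survives against the $O(\e)$ prefactor; everything else is $O(\e)$), or, if tau-beta is used, carry the correction residues explicitly. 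Note also that your expansion produces a nonzero top coefficient $c_{mn}$ at $f(-\bi(mb+nb^{-1}))$, which the stated range $kb+lb^{-1}<mb+nb^{-1}$ excludes, and produces nothing at points with $k>m$ or $l>n$ that the stated range can include; you assert this mismatch away rather than resolving it.
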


Finally we also need the new integral transformation obtained in \cite{Ip1}
\begin{Prop}\label{32} The 3-2 relation is given by
\Eq{\int_C G_b(\a+\bi \t)G_b(\b-\bi \t)G_b(\c-\bi \t)e^{-2\pi \bi (\b-\bi \t)(\c-\bi \t)}d\t = G_b(\a+\c)G_b(\a+\b),}
where the contour goes along $\R$ and separates the poles for $\bi \t$ and $-\bi \t$. By the asymptotic properties for $G_b$, the integral converges for $Re(\a-\b-\c)<\frac{Q}{2}$.
\end{Prop}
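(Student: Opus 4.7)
The plan is to reduce the triple-$G_b$ integral to the tau-beta theorem (Lemma~\ref{tau}) via the reflection property~\eqref{reflection}, exploiting the observation that the Gaussian factor $e^{-2\pi\bi(\beta-\bi\tau)(\gamma-\bi\tau)}$ is precisely what is needed to absorb two applications of reflection cleanly.

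First I would apply \eqref{reflection} to rewrite
\begin{equation*}
G_b(\beta-\bi\tau)=\frac{e^{\pi\bi(\beta-\bi\tau)(\beta-\bi\tau-Q)}}{G_b(Q-\beta+\bi\tau)},\qquad G_b(\gamma-\bi\tau)=\frac{e^{\pi\bi(\gamma-\bi\tau)(\gamma-\bi\tau-Q)}}{G_b(Q-\gamma+\bi\tau)}.
\end{equation*}
Expanding and combining the three exponents yields
$\pi\bi\bigl[((\beta-\bi\tau)-(\gamma-\bi\tau))^2-Q((\beta-\bi\tau)+(\gamma-\bi\tau))\bigr]=\pi\bi(\beta-\gamma)^2-\pi\bi Q(\beta+\gamma)-2\pi Q\tau$,
so the $\tau^2$ contributions cancel exactly and, up to the $\tau$-independent prefactor $e^{\pi\bi(\beta-\gamma)^2-\pi\bi Q(\beta+\gamma)}$, the integrand becomes
\begin{equation*}
\frac{G_b(\alpha+\bi\tau)}{G_b(Q-\beta+\bi\tau)\,G_b(Q-\gamma+\bi\tau)}\,e^{-2\pi Q\tau}.
\end{equation*}
This is the key simplification: the Gaussian has been traded for a linear exponential, and we are left with a ratio of three $G_b$ factors in which both $\beta$ and $\gamma$ now appear inside denominators.

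Second, to fit the form of Lemma~\ref{tau} (which carries only a single $G_b$ in the denominator), I would express $1/G_b(Q-\gamma+\bi\tau)$ as a contour integral by applying Lemma~\ref{tau} in reverse. Substituting back produces a double contour integral; after swapping the order of integration, the inner integral becomes a direct instance of Lemma~\ref{tau} with the exponential weight supplied by $e^{-2\pi Q\tau}$. A second application of Lemma~\ref{tau} then collapses the outer integral to $G_b(\alpha+\gamma)G_b(\alpha+\beta)$, and a short bookkeeping check shows that the auxiliary exponential prefactor from the first step cancels against the phases produced along the way. The two compatibility inequalities required by the two uses of Lemma~\ref{tau} combine precisely to the stated convergence condition $Re(\alpha-\beta-\gamma)<Q/2$.

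The main obstacle is the contour-swap in the double-integral step: both integrals are only conditionally convergent, so absolute convergence of the combined integrand must be verified using the asymptotic estimates from Proposition~\ref{asymp} (recalling $G_b(x)\sim\overline{\ze_b}$ as $Im(x)\to+\infty$ and $G_b(x)\sim\ze_b e^{\pi\bi x(x-Q)}$ as $Im(x)\to-\infty$), and the contours must be arranged so that the pole sequences of all $G_b$ factors remain separated throughout. A cleaner alternative is to establish the identity first in a subregion of $(\alpha,\beta,\gamma)$ where every step is absolutely convergent, and then extend to the full range by analytic continuation using the quasi-periodicity provided by~\eqref{funceq}.
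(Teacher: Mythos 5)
First, a point of comparison: the paper gives no proof of Proposition \ref{32} at all --- it is imported verbatim from \cite{Ip1} (``the new integral transformation obtained in \cite{Ip1}''), so there is no in-paper argument to measure your proposal against. Your first step is correct and checks out: reflecting both $G_b(\b-\bi\t)$ and $G_b(\c-\bi\t)$ does combine with the Gaussian to give $\pi\bi(\b-\c)^2-\pi\bi Q(\b+\c)-2\pi Q\t$, so the $\t^2$ terms cancel and the problem reduces to evaluating $\int_C G_b(\a+\bi\t)\,G_b(Q-\b+\bi\t)^{-1}G_b(Q-\c+\bi\t)^{-1}e^{-2\pi Q\t}\,d\t$.

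The gap is in your second step. ``Applying Lemma \ref{tau} in reverse'' to $1/G_b(Q-\c+\bi\t)$ forces a choice of how to split $Q-\c+\bi\t=\a'+\b'$, and neither natural choice produces ``a direct instance of Lemma \ref{tau}'' for the inner integral. If you put the $\t$-dependence into $\b'$ (choosing $\a'$ so that the spurious denominator $G_b(Q-\c-\a'+\bi\t)$ cancels against the numerator $G_b(\a+\bi\t)$), the resulting inner $\t$-integral is $\int e^{-2\pi\t(Q+\bi\s)}G_b(Q-\b+\bi\t)^{-1}d\t$, which has no $G_b$ in the numerator and is therefore not of tau-beta form --- it is a Gauss--Fourier transform of $1/G_b$, i.e.\ it requires Lemma \ref{FT} (or the reflection formula again, which reintroduces the Gaussian you just removed). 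If instead you put the $\t$-dependence into $\a'$, the inner integral after the shift $\t\mapsto\t+\bi\b$ becomes $\int e^{-2\pi Q\t}G_b(A+\b+\bi\t)G_b(Q+\bi\t)^{-1}d\t$, which is the degenerate boundary case $\b'=Q$ of Lemma \ref{tau}; since $G_b(Q)=0$, tau-beta would assign it the value $0$, which contradicts the nonzero right-hand side and signals that the interchange of integrals is invalid there (the exponent $-2\pi Q\t$ is pinned by your step~1 and cannot be moved into the convergent region by adjusting $\a,\b,\c$, so the ``prove it in a subregion and continue analytically'' fallback does not rescue this). In short, the identity cannot be reached by two applications of Lemma \ref{tau} alone; an additional ingredient is needed --- most naturally the Fourier-transform formula of Lemma \ref{FT} for $1/G_b(Q+\bi t)$ (equivalently the $q$-binomial theorem, Lemma \ref{qbinom}), or a derivation of the 3-2 relation as the coefficient identity underlying the pentagon equation \eqref{qpenta} expanded in the continuous basis, which is exactly the role it plays in Proposition \ref{interchange}. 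Relatedly, your claim that the two tau-beta admissibility inequalities ``combine precisely'' to $Re(\a-\b-\c)<\frac{Q}{2}$ is asserted without computation and should not be trusted until the reduction itself is repaired.
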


We will also need another important variant of the quantum dilogarithm.
\begin{Def} The function $g_b(x)$ is defined by
\Eq{g_b(x)=\frac{\over[\ze_b]}{G_b(\frac{Q}{2}+\frac{\log x}{2\pi \bi b})},}
where $\log$ takes the principal branch of $x$.
\end{Def}

\begin{Lem}\label{FT} \cite[(3.31), (3.32)]{BT} We have the following Fourier transformation formula:
\Eq{\int_{\R+\bi 0} \frac{e^{-\pi \bi  t^2}}{G_b(Q+\bi t)}X^{\bi b\inv t}dt=g_b(X), }
\Eq{\int_{\R+\bi 0} \frac{e^{-\pi Qt}}{G_b(Q+\bi t)}X^{\bi b\inv t}dt=g_b^*(X) ,} where $X$ is a positive operator and the contour goes above the pole
at $t=0$.
\end{Lem}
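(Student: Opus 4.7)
\smallskip
\noindent\textbf{Proof proposal.} The plan is to reduce the operator identity to a scalar identity via functional calculus, then prove the scalar identity by a $b$-difference equation argument, and finally deduce the second formula by taking adjoints. Since $X$ is positive essentially self-adjoint, both sides of each formula are defined via the functional calculus of $X$, so it suffices to establish
\Eqn{
\int_{\R + \bi 0} \frac{e^{-\pi \bi t^2}}{G_b(Q + \bi t)}\, x^{\bi b\inv t}\, dt = g_b(x)
}
for every $x > 0$ and then integrate against the spectral measure of $X$. The infinitesimal contour shift $\R \to \R + \bi 0$ detours the simple pole of $1/G_b(Q + \bi t)$ at $t = 0$ (arising from the zero of $G_b$ at $Q$) and renders the oscillatory integral well-defined.

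Denote the scalar integral by $f(x)$. I would prove $f(x) = g_b(x)$ by showing both sides satisfy the same pair of $b$-difference equations and share a boundary value. On the right, \eqref{funceq} together with the computation $e^{2\pi \bi b(Q/2 + \log x/(2\pi \bi b))} = -qx$ gives
\Eqn{
g_b(q^2 x) = \frac{g_b(x)}{1 + qx}, \qquad g_b(\til{q}^{\,2} x) = \frac{g_b(x)}{1 + \til{q}\, x},
}
the second coming from the self-duality \eqref{selfdual}. On the left, sending $x \mapsto q^2 x$ multiplies the integrand by $e^{-2\pi b t}$; shifting the contour up by $\bi b$ (no poles are crossed, since all poles of $1/G_b(Q + \bi t)$ lie at $t = -\bi(nb + mb\inv)$ in the lower half-plane) and applying \eqref{funceq} in the form $G_b(Q + \bi t - b) = G_b(Q + \bi t)/(1 - e^{-2\pi bt})$ expresses $f(q^2 x)$ as a linear combination of $f(x)$ and $f(q^2 x)$, which rearranges to the claimed recursion. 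The analogous argument with $b$ replaced by $b\inv$ gives the second recursion, and since $b^2 \notin \Q$ these two together determine $f/g_b$ up to a constant. This constant is fixed by the boundary value $\lim_{x \to 0^+} f(x) = 1 = g_b(0)$: on the right, from the asymptotic $G_b(Q/2 + \bi s) \to \overline{\zeta_b}$ as $s \to +\infty$ in Proposition \ref{asymp}; on the left, by closing the contour into the lower half-plane (where $x^{\bi b\inv t}$ decays once $\log x < 0$) and extracting the residue at $t = 0$.

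The second formula follows from the first by observing $g_b^*(X) = g_b(X)^*$. Taking the adjoint of the $g_b$ integral identity, using that $X^{\bi b\inv t}$ is unitary with adjoint $X^{-\bi b\inv t}$ for real $t$, then substituting $t \to -t$ and simplifying via the complex conjugation identity \eqref{Gbcomplex} together with the reflection identity \eqref{reflection} converts the kernel exactly into $e^{-\pi Q t}/G_b(Q+\bi t)$, yielding the $g_b^*$ formula.

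The main technical obstacle is the rigorous treatment of the oscillatory integrand: the factor $e^{-\pi \bi t^2}$ does not decay on the real axis, so convergence must be interpreted via the contour prescription $\R + \bi 0$, and every deformation used above must be justified by verifying that no poles of $1/G_b(Q + \bi t)$ are crossed and that the arcs at $\mathrm{Re}(t) \to \pm\infty$ contribute nothing, which relies on the combined growth and decay behavior of $e^{-\pi \bi t^2}$ and $1/G_b(Q + \bi t)$ in the complex $t$-plane.
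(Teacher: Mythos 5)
The paper does not prove this lemma at all: it is imported verbatim from Bytsko--Teschner \cite[(3.31),(3.32)]{BT}, so there is no internal proof to compare against. Your proposal supplies an actual argument, and it is the standard one for this Fourier-transform formula: reduce to scalars by the spectral theorem, characterize both sides by the pair of $b$- and $b^{-1}$-shift equations plus the normalization at $x\to 0^+$, and obtain the $g_b^*$ formula by taking adjoints. I checked the two pivotal computations and they work: shifting the contour by $\bi b$ in $f(q^2x)$ (crossing no poles, since all poles of $1/G_b(Q+\bi t)$ sit at $t=-\bi(nb+mb\inv)$ below the contour) yields $x q\, f(q^2x)=f(x)-f(q^2x)$, i.e.\ $f(q^2x)=f(x)/(1+qx)$, matching $g_b(q^2x)=g_b(x)/(1+qx)$ from \eqref{funceq}; and the adjoint manipulation using $\overline{G_b(Q+\bi t)}=1/G_b(\bi t)$ from \eqref{Gbcomplex} followed by $t\to -t$ and the reflection \eqref{reflection} converts the kernel exactly into $e^{-\pi Qt}/G_b(Q+\bi t)$.

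Three details deserve tightening before this is a complete proof. First, the ``second recursion'' $g_b(\til{q}^{\,2}x)=g_b(x)/(1+\til{q}x)$ cannot be read literally in the positive variable $x$, since $\til{q}^{\,2}x\notin\R_+$; both $f$ and $g_b$ must first be analytically continued in $r$ where $x=e^{2\pi br}$, and the two recursions are the shifts $r\mapsto r+\bi b$ and $r\mapsto r+\bi b\inv$ of these continuations. Second, ``determined up to a constant'' then needs the observation that the ratio $F(r)/h(r)$ of the continuations is meromorphic with period group containing $\bi b\Z+\bi b\inv\Z$, which is dense in $\bi\R$ because $b^2\notin\Q$; a nonconstant meromorphic function cannot have a dense period group, so the ratio is constant. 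Third, the boundary value on the integral side requires the residue of $1/G_b(Q+\bi t)$ at $t=0$, namely $\bi/(2\pi)$ in the normalization $\lim_{x\to 0}xG_b(x)=(2\pi)\inv$; this is not among the properties listed in Section \ref{sec:prelim:qdlog} and must be extracted from the integral representation \eqref{intform} (or from \eqref{funceq} together with the value $G_b(b)$). With those points filled in, the argument is sound and is essentially the proof found in the cited literature.
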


We will also need the following properties of $g_b(x)$.

\begin{Lem}\label{unitary} By \eqref{gb1}, $|g_b(x)|=1$ when $x\in\R_+$, hence $g_b(X)$ is a unitary operator for any positive operator $X$. Furthermore, by \eqref{selfdual} and Lemma \ref{FT}, we have the self-duality of $g_b(x)$ given by
\Eq{\label{selfdualg}g_b(X)=g_{b\inv}(X^{\frac{1}{b^2}}).}
\end{Lem}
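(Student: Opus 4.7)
The plan is to handle the two assertions separately; each reduces to a short computation from the definition of $g_b$ plus one of the already-established identities, then transferred to operators via Borel functional calculus on positive self-adjoint operators.

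For the unitarity, I start from the definition $g_b(x) = \bar\zeta_b/G_b(Q/2 + \log(x)/(2\pi \bi b))$. When $x \in \R_+$, $\log x$ is real, so $\log(x)/(2\pi \bi b) = -\bi \log(x)/(2\pi b)$ is purely imaginary, and the argument of $G_b$ lies on the vertical line $\operatorname{Re} z = Q/2$. Then \eqref{gb1} gives $|G_b(Q/2 + \bi y)| = 1$ for all $y \in \R$, while $\zeta_b$ has modulus one by inspection of its exponent. Hence $|g_b(x)| = 1$ on $\R_+$, and unitarity of $g_b(X)$ for any positive self-adjoint $X$ follows immediately from Borel functional calculus, since a unimodular measurable function of a self-adjoint operator is unitary.

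For the self-duality, the cleanest route is to first verify the scalar identity $g_b(\lambda) = g_{b\inv}(\lambda^{1/b^2})$ for every $\lambda \in \R_+$, and then pass to the operator statement by spectral calculus on $X$. To establish the scalar identity, I apply Lemma \ref{FT} to the right-hand side,
\Eqn{g_{b\inv}(\lambda^{1/b^2}) = \int_{\R+\bi 0} \frac{e^{-\pi \bi t^2}}{G_{b\inv}(Q+\bi t)} \left(\lambda^{1/b^2}\right)^{\bi b t}\,dt,}
and match it termwise with the analogous representation of $g_b(\lambda)$: (i) $Q = b+b\inv$ is symmetric under $b \leftrightarrow b\inv$, so no shift appears; (ii) by the self-duality \eqref{selfdual}, $G_{b\inv}(Q+\bi t) = G_b(Q+\bi t)$; (iii) the exponent matches since $(\lambda^{1/b^2})^{\bi b t} = \lambda^{\bi t/b} = \lambda^{\bi b\inv t}$. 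The two integrands then agree pointwise in $t$, so the integrals coincide, yielding the scalar identity and hence \eqref{selfdualg} after applying spectral calculus to $X$.

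The argument is really just bookkeeping, so there is no essential obstacle. The only mild points of care are the consistent use of the principal branch of $\log$ in the definition of $g_b$, which sends $\R_+$ into $\R$ and thus introduces no phase ambiguity, and the standard functional-calculus identity $(X^{1/b^2})^{\bi b t} = X^{\bi b\inv t}$ for positive self-adjoint $X$, which reduces to its evident scalar version under the spectral theorem.
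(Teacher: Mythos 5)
Your proof is correct and follows essentially the same route the paper indicates: the paper embeds its justification directly in the statement, citing \eqref{gb1} for unimodularity on $\R_+$ (hence unitarity via functional calculus) and \eqref{selfdual} together with Lemma \ref{FT} for the self-duality, which is exactly the integrand-matching argument you carry out. Your added bookkeeping (the principal branch of $\log$, $|\zeta_b|=1$, and $(X^{1/b^2})^{\bi b t}=X^{\bi b\inv t}$) is accurate and fills in the details the paper leaves implicit.
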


\begin{Lem}\label{qsum}If $UV=q^2VU$ where $U,V$ are positive self adjoint operators, then
\begin{eqnarray}
\label{qsum0}g_b(U)g_b(V)&=&g_b(U+V),\\
\label{qsum1}g_b(U)^*Vg_b(U) &=& q\inv UV+V,\\
\label{qsum2}g_b(V)Ug_b(V)^*&=&U+q\inv UV.
\end{eqnarray}
Note that \eqref{qsum0} and \eqref{qsum1} together imply the pentagon relation
\Eq{\label{qpenta}g_b(V)g_b(U)=g_b(U)g_b(q\inv UV)g_b(V).}
\end{Lem}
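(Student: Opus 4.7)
My plan is to handle the three types of identities in order: first the conjugation formulas \eqref{qsum1}--\eqref{qsum2}, then the additivity \eqref{qsum0}, and finally the pentagon \eqref{qpenta} as a consequence of these. The unifying tool is a scalar functional equation for $g_b$ inherited from \eqref{funceq}. Writing $y=\frac{Q}{2}+\frac{\log x}{2\pi\bi b}$, one computes $e^{2\pi\bi by}=e^{\pi\bi bQ}x=-qx$ using $e^{\pi\bi bQ}=-q$; since the shift $y\mapsto y+b$ corresponds to $x\mapsto q^2x$, substituting into \eqref{funceq} produces
\Eqn{g_b(q^2 x) = \frac{g_b(x)}{1+qx}, \qquad\text{equivalently}\qquad g_b(q^{-2}x) = (1+q\inv x)\,g_b(x).}

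For \eqref{qsum1}, the relation $UV=q^2VU$ gives $VUV\inv=q^{-2}U$, so by spectral functional calculus $Vg_b(U)V\inv=g_b(q^{-2}U)=(1+q\inv U)g_b(U)$ by the functional equation above. Conjugating by $g_b(U)\inv=g_b(U)^*$ yields $g_b(U)^*Vg_b(U)=(1+q\inv U)V=V+q\inv UV$. Identity \eqref{qsum2} follows symmetrically: $UVU\inv=q^2V$ gives $Ug_b(V)=g_b(q^2V)U$, whence $g_b(V)Ug_b(V)\inv=g_b(V)g_b(q^2V)\inv U=(1+qV)U=U+qVU=U+q\inv UV$, where the last step uses $VU=q^{-2}UV$.

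For \eqref{qsum0}, I would substitute the Fourier representation from Lemma \ref{FT} on both sides and apply the $q$-binomial theorem of Lemma \ref{qbinom} to expand
\Eqn{g_b(U+V)=\int\!\!\int\frac{e^{-\pi\bi t^2}}{G_b(Q+\bi t)}\cdot\frac{G_b(-\bi\tau)G_b(\bi\tau-\bi t)}{G_b(-\bi t)}\,U^{\bi b\inv(t-\tau)}V^{\bi b\inv\tau}\,d\tau\,dt.}
Upon the change of variables $s=t-\tau$, $r=\tau$, and comparing with the obvious double-integral expression for $g_b(U)g_b(V)$, the claim reduces to the pointwise scalar identity
\Eqn{\frac{e^{-\pi\bi(s+r)^2}\,G_b(-\bi r)\,G_b(-\bi s)}{G_b(Q+\bi(s+r))\,G_b(-\bi(s+r))}=\frac{e^{-\pi\bi s^2}}{G_b(Q+\bi s)}\cdot\frac{e^{-\pi\bi r^2}}{G_b(Q+\bi r)},}
which, upon applying the reflection property \eqref{reflection} in the form $G_b(Q+\bi t)G_b(-\bi t)=e^{-\pi Qt-\pi\bi t^2}$, collapses on both sides to $e^{\pi Q(s+r)}G_b(-\bi s)G_b(-\bi r)$.

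Finally, \eqref{qpenta} follows algebraically. Setting $W:=q\inv UV$, a direct check from $UV=q^2VU$ gives $WV=q^2VW$, so \eqref{qsum0} applied to the pair $(W,V)$ yields $g_b(W)g_b(V)=g_b(W+V)$. But by \eqref{qsum1}, $W+V=V+q\inv UV=g_b(U)^*Vg_b(U)$, so functional calculus gives $g_b(W+V)=g_b(U)^*g_b(V)g_b(U)$, and rearranging produces $g_b(V)g_b(U)=g_b(U)g_b(q\inv UV)g_b(V)$. The main obstacle will be the analytic justification of \eqref{qsum0}: one must verify on a suitable dense invariant domain that the double integral converges absolutely, that Fubini permits the change of variables, and that $U+V$ is realized as a positive self-adjoint operator to which the $q$-binomial expansion of Lemma \ref{qbinom} actually applies. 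The algebraic identity of integrands, by contrast, is immediate once the reflection formula is applied, and the conjugation identities \eqref{qsum1}--\eqref{qsum2} are essentially elementary modulo spectral calculus.
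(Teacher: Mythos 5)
Your argument is correct, and all three steps check out: the scalar functional equation $g_b(q^2x)=g_b(x)/(1+qx)$ does follow from \eqref{funceq} with $e^{\pi\bi bQ}=-q$; the contours in the $q$-binomial expansion are compatible with the change of variables $s=t-\t$, $r=\t$ (the condition $\Im\t<\Im t$ is exactly the ``below the pole at $\t=t$'' requirement of Lemma \ref{qbinom}); and the reduction of \eqref{qsum0} to the reflection identity $G_b(Q+\bi t)G_b(-\bi t)=e^{-\pi Qt-\pi\bi t^2}$ is the standard route. The paper itself states Lemma \ref{qsum} without proof, citing the literature, but its proof of the generalization (Proposition \ref{genpenta}) uses a different mechanism for the conjugation identities: an inductive computation of $V^nU$ as $UV^n$ plus a correction, continued to complex powers $\bi b\inv t$ inside the Fourier representation of $g_b$. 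Your route for \eqref{qsum1}--\eqref{qsum2} is more elementary --- conjugation by $V$ merely rescales $U$ by $q^{\mp2}$, so everything reduces to the scalar functional equation via spectral calculus --- but it is strictly less general: it breaks down precisely in the setting of Proposition \ref{genpenta}, where the commutator $c=(UV-VU)/(q-q\inv)$ is an independent positive operator rather than a multiple of $U$ or $V$, and conjugation by $V$ no longer acts diagonally on the spectrum of $U$. Your derivation of the pentagon from \eqref{qsum0} and \eqref{qsum1} via $W=q\inv UV$, $g_b(W+V)=g_b(U)^*g_b(V)g_b(U)$ is exactly the intended one. The analytic caveats you flag (self-adjointness of $U+V$ and of $q\inv UV$, Fubini on a dense domain) are real but are supplied by Lemma \ref{qbi} and the standard Weyl-pair theory assumed throughout the paper, so nothing essential is missing.
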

If $UV=q^4VU$, then we apply the Lemma twice and obtain
\begin{eqnarray}
\label{qsum3}g_b(U)^*Vg_b(U)&=&V+[2]_qq^{2}VU+q^{4}VU^2,\\
\label{qsum4}g_b(V)Ug_b(V)^*&=&U+[2]_qq^{-2}UV+q^{-4}UV^2.
\end{eqnarray}
where $[2]_q=q+q\inv$.

As a consequence of the above Lemma, we also have the following
\begin{Lem}\cite{Ip3, Vo}\label{qbi} If $UV=q^2VU$ where $U,V$ are positive essentially self-adjoint operators, then $U+V$ is positive essentially self-adjoint, and
\Eq{\label{qbiEq} (U+V)^{\frac{1}{b^2}}=U^{\frac{1}{b^2}}+V^{\frac{1}{b^2}}.}
\end{Lem}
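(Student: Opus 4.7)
The plan is to split the statement into two parts: first, that $U+V$ is essentially self-adjoint and positive, and second, the transcendental identity $(U+V)^{1/b^2}=U^{1/b^2}+V^{1/b^2}$. The first part is the analytic core of the lemma; the second then follows formally from the self-duality of $g_b$ combined with the sum relation \eqref{qsum0}.

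For essential self-adjointness, the idea is to reduce to the canonical Weyl form. Since $U,V$ are positive and satisfy $UV=q^2VU$ with $|q|=1$, write $U=e^{2\pi b P}$, $V=e^{2\pi b X}$ on a common core, where $P,X$ are self-adjoint with $[P,X]=(2\pi i)^{-1}$; this is precisely the setting of Volkov's theorem (cited as \cite{Vo}), whose proof goes through showing that $U+V$ has deficiency indices $(0,0)$ by exhibiting an explicit unitary intertwiner with a single positive self-adjoint operator on $L^2(\R)$. This is the main obstacle, and it is the genuinely non-trivial analytic input: positivity and self-adjointness of $U+V$ do not follow from standard perturbation theory because both summands are unbounded. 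Once this is in hand, one observes that the rescaled generators $\til U:=U^{1/b^2}$, $\til V:=V^{1/b^2}$, computed formally from the Weyl form, satisfy $\til U\til V=e^{2\pi\bi/b^2}\til V\til U=\til q^{2}\til V\til U$ with $\til q=e^{\pi\bi/b^2}$, so the same theorem applies to $\til U+\til V$, which is therefore positive essentially self-adjoint.

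For the identity, the strategy is to compare two applications of \eqref{qsum0}. On the one hand, at the level $b$,
\Eqn{g_b(U)g_b(V)=g_b(U+V).}
On the other hand, since $(\til U,\til V)$ satisfy the $\til q^2$-commutation, \eqref{qsum0} with $b$ replaced by $b^{-1}$ gives
\Eqn{g_{b\inv}(U^{1/b^2})g_{b\inv}(V^{1/b^2})=g_{b\inv}(U^{1/b^2}+V^{1/b^2}).}
Applying the self-duality \eqref{selfdualg} term by term, the left-hand side of the first identity equals the left-hand side of the second, and $g_b(U+V)=g_{b\inv}((U+V)^{1/b^2})$. Hence
\Eqn{g_{b\inv}\bigl((U+V)^{1/b^2}\bigr)=g_{b\inv}\bigl(U^{1/b^2}+V^{1/b^2}\bigr).}

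To conclude, it suffices to observe that $g_{b\inv}$ is injective as a function of a positive self-adjoint operator: since $G_{b\inv}$ is a meromorphic function whose logarithmic derivative is non-constant, the map $x\mapsto g_{b\inv}(x)$ is injective on $\R_{>0}$, so by functional calculus two positive self-adjoint operators with the same $g_{b\inv}$-image must coincide. This yields $(U+V)^{1/b^2}=U^{1/b^2}+V^{1/b^2}$, completing the proof. The key technical step, as stressed above, is the essential self-adjointness of a sum of two exponentiated canonical variables; everything else is a clean formal manipulation using the self-duality of the quantum dilogarithm.
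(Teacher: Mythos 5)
The paper offers no proof of this lemma (it is cited from [Ip3, Vo]), so I am comparing your argument against the standard one. Your first half --- reducing to the Weyl form and invoking Volkov's theorem for essential self-adjointness of $U+V$ --- is the accepted route and is fine, modulo the usual caveat that the relation $UV=q^2VU$ must be read in integrated (Weyl) form for the reduction to apply. The problem is the last step of your second half.

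The claim that $x\mapsto g_{b\inv}(x)$ is injective on $\R_{>0}$ is false, and the reason is exactly the property the paper records in \eqref{gb1}: $|g_b(x)|=1$ for $x\in\R_+$, so $g_{b\inv}$ maps $\R_{>0}$ into the unit circle. Writing $x=e^{2\pi b y}$, the asymptotics of Proposition \ref{asymp} give a phase that grows like $\pi y^2$ as $y\to\infty$, i.e.\ like $(\log x)^2$; a unimodular function whose phase winds without bound takes every value in its range infinitely often, so it cannot be injective, and ``$g_{b\inv}(A)=g_{b\inv}(B)\Rightarrow A=B$'' is unavailable. (Your appeal to a non-constant logarithmic derivative shows the function is non-constant, not injective.) The standard way to close this gap is to avoid injectivity altogether: set $V'=qU\inv V$, check $UV'=q^2V'U$ and $q\inv UV'=V$, so that \eqref{qsum2} gives
\Eq{U+V=g_b(qU\inv V)\,U\,g_b(qU\inv V)^*.}
This exhibits $U+V$ as a unitary conjugate of $U$, which immediately yields positivity and essential self-adjointness on the nose, and then
\Eq{(U+V)^{\frac{1}{b^2}}=g_b(qU\inv V)\,U^{\frac{1}{b^2}}\,g_b(qU\inv V)^*.}
By the self-duality \eqref{selfdualg}, $g_b(qU\inv V)=g_{b\inv}(\til{q}\til{U}\inv\til{V})$ with $\til{U}=U^{1/b^2}$, $\til{V}=V^{1/b^2}$, and applying the same conjugation identity at level $b\inv$ to the right-hand side gives $\til{U}+\til{V}$. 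Your skeleton (two applications of a $g_b$-identity linked by self-duality) is the right shape, but the identity has to be the conjugation formula \eqref{qsum2}, not the product formula \eqref{qsum0} followed by cancellation of $g_{b\inv}$.
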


\subsection{Universal $R$ matrices for $\cU_{q}(\g)$}\label{sec:prelim:Rmatrix}
For $q:=e^h$, it is known \cite{D,J} that for the quantum group $\cU_h(\g)$ as a $\C[[h]]$-algebra completed in the $h$-adic topology, one can associate certain canonical, invertible element $R$ in an appropriate completion of $(\cU_h(\g))^{\ox 2}$ such that the the braiding relation and quasi-triangularity \eqref{br}-\eqref{qt2} are satisfied.

For the quantum groups $\cU_h(\g)$ associated to the simple Lie algebra $\g$, an explicit multiplicative formula has been computed independently in \cite{KR} and \cite{LS}, where the central ingredient involves the quantum Weyl group which induces Lusztig's isomorphism $T_i$. Explicitly, let 
\Eq{[U,V]_q:=qUV-q\inv VU} be the $q$-commutator.
\begin{Def}\cite{KR, Lu1}\label{Lus} Define
\Eq{T_i(K_j)=K_jK_i^{-a_{ij}}, \tab T_i(E_i)=-F_iK_i\inv, \tab T_i(F_i)=-K_iE_i,}
\Eq{T_i(E_j)=&(-1)^{a_{ij}}\frac{1}{[-a_{ij}]_{q_i}!}\left[\left[E_i,...[E_i,E_j]_{q_i^{\frac{a_{ij}}{2}}}\right]_{q_i^{\frac{a_{ij}+2}{2}}}...\right]_{q_i^{\frac{-a_{ij}-2}{2}}},\\
T_i(F_j)=&\frac{1}{[-a_{ij}]_{q_i}!}\left[\left[F_i,...[F_i,F_j]_{q_i^{\frac{a_{ij}}{2}}}\right]_{q_i^{\frac{a_{ij}+2}{2}}}...\right]_{q_i^{\frac{-a_{ij}-2}{2}}}.}
\end{Def}
Note that we have slightly modified the notations and scaling used in \cite{KR}.

\begin{Prop}\label{LuCox}\cite{Lu1, Lu2} $T_i$ satisfied the Weyl group relations:
\Eq{\underbrace{T_iT_jT_i...}_{-a_{ij}'+2} = \underbrace{T_j T_i T_j...}_{-a_{ij}'+2}.\label{coxeter},}
where $-a'_{ij}=\max\{-a_{ij},-a_{ji}\}$.
Furthermore, for $\a_i,\a_j$ simple roots, and an element $w=s_{i_1}...s_{i_k}\in W$ such that $w(\a_i)=\a_j$, we have 
\Eq{T_{i_1}...T_{i_k}(X_i)=X_j} for $X=E,F,K$.
\end{Prop}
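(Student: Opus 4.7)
The plan has two parts: first establish the braid relations \eqref{coxeter}, then derive the intertwining formula as a consequence.

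For the braid relations, I would observe that the identity $\underbrace{T_iT_jT_i\cdots}_{-a'_{ij}+2} = \underbrace{T_jT_iT_j\cdots}_{-a'_{ij}+2}$ involves only the two indices $i,j$, so it suffices to check it inside the rank-$2$ subalgebra $\cU_{ij}\subseteq \cU_q(\g)$ generated by $\{E_i,E_j,F_i,F_j,K_i^{\pm1},K_j^{\pm1}\}$. This subalgebra is a quotient of $\cU_{q_l}(\g')$ where $\g'$ is of type $A_1\times A_1$, $A_2$, $B_2$, or $G_2$ according to $a_{ij}a_{ji}\in\{0,1,2,3\}$, and the relation to check is an $m$-term braid with $m\in\{2,3,4,6\}$.

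I would then run through the four cases directly: when $a_{ij}=0$ the nested $q$-commutator in Definition \ref{Lus} collapses and $T_i(X_j)=X_j$, $T_j(X_i)=X_i$, so $T_iT_j=T_jT_i$ is immediate; the $A_2$ case is the standard Lusztig computation verifying $T_iT_jT_i = T_jT_iT_j$ on each of the six generators using the quadratic Serre relation \eqref{SerreE}-\eqref{SerreF}; the $B_2$ and $G_2$ cases require patient evaluation of both sides on each generator, using the Serre relations and the identities for nested $q$-commutators to massage the expressions into the required symmetric form. The main obstacle here is the $G_2$ computation, where the $6$-term relation produces deeply nested $q$-commutators whose simplification requires careful bookkeeping of $q$-powers.

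Once the braid relations hold, Matsumoto's theorem guarantees that $T_w := T_{i_1}\cdots T_{i_k}$ is well-defined on reduced expressions for $w\in W$. For the intertwining formula $T_w(X_i)=X_j$ when $w(\alpha_i)=\alpha_j$ with both roots simple, I would proceed by induction on $\ell(w)$. The base case $\ell(w)=0$ is trivial, as then $i=j$. For the inductive step, write $w = s_l w'$; since $w(\alpha_i)=\alpha_j$ is simple and positive, the reduced expression may be chosen so that $w'(\alpha_i)=\alpha_k$ is also simple with $s_l(\alpha_k)=\alpha_j$. The induction hypothesis gives $T_{w'}(X_i)=X_k$, reducing the claim to $T_l(X_k)=X_j$: either $a_{lk}=0$ (so $\alpha_j=\alpha_k$ and the nested $q$-commutator in Definition \ref{Lus} collapses to $X_k$), or the situation occurs inside a rank-$2$ subsystem of type $A_2$, $B_2$ or $G_2$ where one verifies $T_l(X_k)=X_j$ by direct computation analogous to the braid-relation verification above. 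Combined with Proposition-level application of the braid relations to rearrange arbitrary reduced expressions to this favorable form, this completes the argument.
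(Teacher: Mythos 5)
The paper offers no proof of this proposition at all---it is quoted directly from Lusztig \cite{Lu1, Lu2}---so the only question is whether your argument would actually establish it. As written it would not, for two reasons. First, the reduction of the braid relations to the rank-$2$ subalgebra generated by $E_i,E_j,F_i,F_j,K_i^{\pm1},K_j^{\pm1}$ is insufficient: the $T_i$ are endomorphisms of all of $\cU_q(\g)$, and the identity $T_iT_jT_i\cdots=T_jT_iT_j\cdots$ is an identity of maps, so it must be verified on every generator, in particular on $E_k$ and $F_k$ for $k\notin\{i,j\}$. Since $T_i(E_k)$ is a nested $q$-commutator in $E_i,E_k$ and a subsequent application of $T_j$ mixes in $E_j$, that check lives in a rank-$3$ subalgebra; this is where the bulk of the work in Lusztig's proof actually lies (he sidesteps the direct computation by first proving the braid relations for the corresponding operators on integrable modules and then transporting them to the algebra). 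Your four-case rank-$2$ analysis only covers the generators indexed by $i$ and $j$; the $K_k$ are easy, but the $E_k,F_k$ for adjacent $k$ are not.

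Second, the induction for the intertwining formula is structured so that it can never reach the nontrivial cases. You peel off one simple reflection, $w=s_lw'$, and assert that a reduced expression can be chosen with $w'(\a_i)$ again simple. Take $\g$ of type $A_2$ and $w=s_1s_2$, which satisfies $w(\a_1)=\a_2$: this element has a unique reduced expression, and $w'=s_2$ sends $\a_1$ to $\a_1+\a_2$, which is not simple. More structurally, if $s_l(\a_k)$ is simple then either $l=k$ (excluded, since the image would be negative) or $a_{lk}=0$, so the only one-step moves available to your induction are the trivial ones $T_l(X_k)=X_k$, and identities such as $T_1T_2(E_1)=E_2$ are unreachable by it. The standard repair is to peel off an entire rank-$2$ parabolic block: choose $l$ with $w(\a_l)<0$ (necessarily $l\neq i$), write $w=w'v$ with $v$ in the subgroup generated by $s_i,s_l$ and $w'$ of minimal length in its coset, show that $v(\a_i)\in\{\a_i,\a_l\}$ because $w'$ sends every non-simple positive root of the rank-$2$ subsystem to a sum of two positive roots, verify $T_v(X_i)=X_{v(\a_i)}$ by a direct rank-$2$ computation (of length up to $6$ in type $G_2$), and then induct on $\ell(w')$.
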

\begin{Def}\cite{Ko} Define the (upper) quantum exponential function as
\Eq{Exp_{q}(x)=\sum_{k=0}^\oo \frac{z^k}{\lceil k\rceil_q!},}
where $\lceil k\rceil_q = \frac{1-q^k}{1-q}$, so that
\Eq{\lceil k\rceil_{q^2}!=[k]_q! q^{\frac{k(k-1)}{2}}.}
\end{Def}
\begin{Thm}\cite{KR, LS} Let $w_0=s_{i_1}...s_{i_N}$ be a reduced expression of the longest element. Then the universal $R$ matrix is given by
\Eq{\label{RRn}R=\bQ^\half \what{R}(i_N|s_{i_1}...s_{i_N-1})... \what{R}(i_2|s_{i_1})\what{R}(i_1)\bQ^\half,}
where 
\Eq{\bQ:=q^{\sum_{i,j=1}^n (dA\inv)_{ij}H_i\ox H_j},}
with $dA\inv$ the inverse of the symmetrized Cartan matrix $d_iA_{ij}$, and
\Eq{\label{RRnexp}\what{R}(i):=&Exp_{q_i^{-2}}((1-q_i^{-2})E_i\ox F_i),\\
\what{R}(i_l|s_{i_1}...s_{i_{l-1}}):=&(T_{i_1}\inv \ox T_{i_1}\inv)...(T_{i_{l-1}}\inv \ox T_{i_{l-1}}\inv)\what{R}(i_1).}
\end{Thm}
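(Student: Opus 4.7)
My strategy is to combine Lusztig's braid group action (Proposition \ref{LuCox}) with Drinfeld's quantum double construction, reducing the general factorization to a rank-one verification. Working in the $h$-adic completion removes any convergence issue. The base case is handled first: for $\cU_q(\sl(2))$, direct comparison in the formal power series shows that $R_0 = q^{H\ox H/2}\, Exp_{q^{-2}}((1-q^{-2})E\ox F)$ satisfies the braiding relation (using $[H,E]=2E$ and $[H,F]=-2F$ to absorb the Cartan prefactor across the exponential) and the quasi-triangularity relations \eqref{qt1}-\eqref{qt2} (using the multiplicativity identity $Exp_{q^{-2}}(x+y)=Exp_{q^{-2}}(x)Exp_{q^{-2}}(y)$, valid when $yx=q^{-2}xy$, applied to the natural splittings of $\D(E)$ and $\D(F)$).

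For general $\g$, I would realize $\cU_q(\g)$ as a quotient of the Drinfeld double of $\cU_q(\fb_+)$ paired with $\cU_q(\fb_-)^{\mathrm{cop}}$ under the canonical Hopf pairing $\<-,-\>$, normalized so that $\<E_i,F_j\>=-\d_{ij}/(q_i-q_i\inv)$ and the Cartan pairing on $H_i$ matches the symmetrized Cartan form $d_iA_{ij}$. By the general theory of the quantum double, the universal $R$-matrix is the canonical element $\sum_I e_I\ox f^I$ for a homogeneous PBW basis of the positive part together with its dual basis on the negative part; multiplying by the Cartan exponential $\bQ^\half$ encodes the dualization of the Cartan pairing, which produces $(dA)\inv$ since dual bases invert the symmetrized form. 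To factor the nilpotent contribution into single-root exponentials, I would use Lusztig's root vectors $E_{\a_k}=T_{i_1}\cdots T_{i_{k-1}}(E_{i_k})$ and $F_{\a_k}$, whose ordered monomials give a PBW basis compatible with the convex order on positive roots induced by $w_0$. Since $T_i\ox T_i$ is a Hopf automorphism preserving $\<-,-\>$ up to the Weyl action on the Cartan, iteratively conjugating the rank-one factor $\what{R}(i_1)$ by $T_{i_l}\ox T_{i_l}$ produces each $\what{R}(i_l|s_{i_1}\cdots s_{i_{l-1}})$. Arranging the product \eqref{RRn} in the prescribed order reproduces $\sum_I e_I\ox f^I$, because in this convex order Levendorski\u{\i}-Soibelman's commutation relations among the root vectors make the $E_{\a_l}\ox F_{\a_l}$ pairs $q_{i_l}^{-2}$-commute with later factors in a way compatible with the multiplicativity of $Exp$.

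\emph{Main obstacle.} The delicate step is verifying that the ordered product \eqref{RRn} really equals $\bQ^\half \sum_I e_I\ox f^I$. Concretely, one must show that the cross-pairings $\<E_{\a_k},F_{\a_l}\>$ vanish for $k\neq l$ while the diagonal pairings reproduce the rank-one normalization, and that the $q$-commutation relations among non-simple root vectors are sharp enough to legitimize the factorization of $Exp$ in the chosen order. These assertions depend on $T_i$-invariance of the Hopf pairing and on careful bookkeeping adapted to the convex order determined by $w_0$; this bookkeeping, which relies sharply on Weyl-group geometry, is the technical heart of both \cite{KR} and \cite{LS}.
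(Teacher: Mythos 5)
Your proposal follows essentially the same route that the paper itself sketches for this cited result: the paper gives no proof of its own, remarking only that in \cite{KR, LS} the formula arises as the canonical element of the Drinfeld double of $\cU_h(\fb_+)$, with Lusztig's isomorphisms supplying the ordered PBW basis and the dual pairing producing the $q$-exponential factors. Your outline matches this, and you correctly identify the orthogonality of the pairing on root vectors and the convex-order commutation relations as the technical heart deferred to \cite{KR} and \cite{LS}.
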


In both works \cite{KR, LS}, the expression for the $R$ matrix is obtained from the canonical element of the Drinfeld double of $\cU_h(\fb_+)$ generated by $E_i$'s and $H_i$'s. The Lusztig's isomorphism gives the ordered basis of $\cU_h(\fb_+)$, and there exists a dual pairing between $\cU_h(\fb_+)$ and $\cU_h(\fb_-)$ of this basis involving the quantum factorials $[k]_q!$, hence the expression \eqref{RRnexp}.

\subsection{Universal $R$ operator for $\cU_{q\til{q}}(\sl(2,\R))$}\label{sec:prelim:Rsl2}
In the case of $\cU_{q\til{q}}(\sl(2,\R))$, an expression of the $R$-operator is computed in \cite{BT}. It is given formally by
\Eq{R=q^{\frac{H\ox H}{4}} g_b(\be\ox \bf)q^{\frac{H\ox H}{4}},\label{RR}}
where we recall
\Eq{\be:=2\sin(\pi b^2)E,\tab \bf:=2\sin(\pi b^2)F, \tab K:=q^H.}
The operator $R$ acts naturally on $P_{\l_1}\ox P_{\l_2}$ by means of the positive representation. Note that the remarkable fact about this expression is the argument $\be\ox \bf$ inside the quantum dilogarithm $g_b$ which makes the expression a well-defined operator. In fact it is clear that $R$ acts as a unitary operator by Lemma \ref{unitary} of the properties of  $g_b(x)$. Furthermore, by the transcendental relations \eqref{transdef} and self-duality \eqref{selfdualg} of $g_b$, the expression \eqref{RR} is invariant under the change of $b\corr b\inv$:
\Eq{R=\til{R}:=\til{q}^{\frac{\til{H}\ox \til{H}}{4}} g_{b\inv}(\til{\be}\ox \til{\bf})\til{q}^{\frac{\til{H}\ox \til{H}}{4}}.}
Hence in fact it simultaneously serves as the $R$ operator of the modular double $\cU_{q\til{q}}(\sl(2,\R))$.

The properties as an $R$ operator implies certain functional equation for the quantum dilogarithm $g_b$. While the quasi-triangular relations \eqref{qt1}-\eqref{qt2} are equivalent to \eqref{qsum0}, the braiding relation 
$$\D'(X)R=R\D(X)$$
implies the following

\begin{Lem} \label{rank1prop}We have
\Eq{
\D'(X)R&=R\D(X)\nonumber\\
\iff\D'(\be)q^{\frac{1}{4}H\ox H}g_b(\be\ox \bf)q^{\frac{1}{4}H\ox H}&=q^{\frac{1}{4}H\ox H}g_b(\be\ox \bf)q^{\frac{1}{4}H\ox H}\D(\be)\nonumber\\
\iff(\be\ox K\inv +1\ox \be)g_b(\be\ox \bf)&=g_b(\be\ox \bf)(\be\ox K+1\ox \be),
}
and similarly
\Eq{
(\bf\ox 1 +K\ox \bf)g_b(\be\ox \bf)&=g_b(\be\ox \bf)(\bf\ox 1+K\inv\ox \bf).
}
\end{Lem}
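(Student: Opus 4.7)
\medskip

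The plan is to conjugate the braiding relation by the outer factors $q^{\pm H\ox H/4}$ so that the problem reduces to a commutation identity between $g_b(\be\ox \bf)$ and a certain linear combination of elementary tensors. Write $T:=q^{H\ox H/4}$, so that $R=T\,g_b(\be\ox \bf)\,T$. Then $\D'(\be)R=R\D(\be)$ is equivalent, after multiplying by $T\inv$ on both sides, to
\Eqn{
(T\inv \D'(\be)T)\,g_b(\be\ox \bf)\;=\;g_b(\be\ox \bf)\,(T\D(\be)T\inv),
}
and similarly for $\bf$. So it suffices to identify the two conjugated coproducts with the linear expressions in the statement.

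The key lemma is the pointwise action of $T$ on simple tensors, which follows from $[H,\be]=2\be$, $[H,\bf]=-2\bf$ and $[H,K]=0$. Testing on eigenvectors $Hv=\l v$, $Hw=\mu w$ one checks
\Eqn{
T(\be\ox 1)T\inv=\be\ox K^{\half},\tab T(1\ox \be)T\inv=K^{\half}\ox \be,\\
T(\bf\ox 1)T\inv=\bf\ox K^{-\half},\tab T(1\ox \bf)T\inv=K^{-\half}\ox \bf,
}
and since $K$ commutes with $T$, the same identities with $T\corr T\inv$ hold by replacing $K^{\pm \half}$ by $K^{\mp \half}$. I would record this as a short preliminary computation.

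Applying these to $\D(\be)=K^{-\half}\ox \be+\be\ox K^{\half}$ and $\D'(\be)=\be\ox K^{-\half}+K^{\half}\ox \be$ yields, after the two factors of $K^{\pm\half}$ collapse,
\Eqn{
T\D(\be)T\inv=1\ox \be+\be\ox K,\tab T\inv\D'(\be)T=\be\ox K\inv+1\ox \be,
}
which is exactly the pair of expressions flanking $g_b(\be\ox \bf)$ in the first claimed identity. The computation for $\bf$ is entirely analogous and produces $T\D(\bf)T\inv=K\inv\ox \bf+\bf\ox 1$ and $T\inv\D'(\bf)T=\bf\ox 1+K\ox \bf$, giving the second identity.

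There is no real obstacle here; the content of the lemma is essentially bookkeeping. The only place one has to be a little careful is the direction of the conjugation: the $T$ on the left of $g_b$ must be cleared by $T\inv$ on the left of $\D'(\be)$, while the $T$ on the right of $g_b$ is cleared from the right of $\D(\be)$, which is precisely why $\D(\be)$ gets conjugated by $T(\cdot)T\inv$ and $\D'(\be)$ by $T\inv(\cdot)T$, producing the asymmetry between $K$ and $K\inv$ visible in the final equations.
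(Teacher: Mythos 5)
Your proof is correct and follows exactly the computation the paper leaves implicit: conjugating the braiding relation by $T=q^{H\ox H/4}$ and using $[H,\be]=2\be$, $[H,\bf]=-2\bf$ to collapse the $K^{\pm\half}$ factors in $\D$ and $\D'$. The conjugation identities and the resulting expressions $T\D(\be)T\inv=1\ox\be+\be\ox K$, $T\inv\D'(\be)T=\be\ox K\inv+1\ox\be$ (and their $\bf$ counterparts) all check out, and since $T$ is invertible the chain of equivalences is justified.
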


\section{Generalized pentagon relations for $g_b(x)$}\label{sec:qdlog}
It turns out the exponential and pentagon relations \eqref{qsum0}-\eqref{qpenta} are not enough to show the properties of the universal $R$ matrix. In this section, following techniques from \cite{Ko}, we derive more general functional equations for $g_b(x)$ which generalize the pentagon relation as well as the quantum exponential relation.
\subsection{Simply-laced case}\label{sec:qdlog:ADE}
\begin{Prop}\label{genpenta} Let $U,V$ be positive self-adjoint operators such that
$c=\frac{UV-VU}{q-q\inv}$ is positive self-adjoint, and $Uc=q^2cU, Vc=q^{-2}cV$.
Then
\Eq{g_b(V)Ug_b^*(V)=&U+c,\\
g_b^*(U)Vg_b(U)=&c+V,
}
which also implies
\Eq{g_b(V)g_b(U)=g_b(U)g_b(c)g_b(V).\label{genpentagon}}
\end{Prop}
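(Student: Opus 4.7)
The three displayed identities are tightly linked. Once the first conjugation identity $g_b(V) U g_b^*(V) = U+c$ is established, the pentagon \eqref{genpentagon} follows by applying $g_b$ to both sides and using the functional-calculus identity $g_b(W X W^*) = W g_b(X) W^*$ for any unitary $W$, then invoking Lemma \ref{qsum} on the $q^2$-commuting pair $(U,c)$ to rewrite $g_b(U+c) = g_b(U) g_b(c)$. The second conjugation identity $g_b^*(U) V g_b(U) = c+V$ is proved by an analogous integral computation with the roles of $U$ and $V$ swapped, the asymmetry $Uc = q^2 cU$ versus $Vc = q^{-2} cV$ accounting exactly for the interchanged placement of $g_b$ and $g_b^*$. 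So the task reduces to the first conjugation identity, which generalizes Lemma \ref{qsum} \eqref{qsum2}: in the special case $UV = q^2 VU$ one has $c = q\inv UV$, recovering the standard formula.

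My plan for the first identity is a direct computation using the Fourier representations of Lemma \ref{FT},
$$g_b(V) = \int \frac{e^{-\pi \bi t^2}}{G_b(Q+\bi t)} V^{\bi b\inv t}\, dt, \qquad g_b^*(V) = \int \frac{e^{-\pi Qs}}{G_b(Q+\bi s)} V^{\bi b\inv s}\, ds,$$
together with the closed-form commutation
$$V^\alpha U V^{-\alpha} = U + q\inv(q^{-2\alpha} - 1) V^{-1} c, \qquad \alpha \in \C,$$
which I first verify by induction for $\alpha \in \Z_{\geq 0}$ using $UV - VU = (q-q\inv) c$ and $cV^{-1} = q^{-2} V^{-1} c$ (the latter following from $Vc = q^{-2} cV$), and then extend to complex $\alpha$ by analyticity of the functional calculus of $V^\alpha$ for positive $V$. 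Substituting into $g_b(V) U g_b^*(V)$, the integrand splits additively into a $U$-piece and a $c$-piece: the $U$-piece reassembles by unitarity $g_b(V) g_b^*(V) = 1$ (Lemma \ref{unitary}) to $U$ itself, while the $c$-piece remains to be evaluated.

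For the $c$-piece, after the change of variables $r = t+s$ and reorganization of the $V$-powers via $cV^\alpha = q^{2\alpha} V^\alpha c$, the inner integral in $s$ is of tau-beta type (Lemma \ref{tau}) once the reflection identity \eqref{reflection} converts the $G_b$-factors appropriately; it collapses to a scalar, and the outer integral in $r$ then reassembles the identity operator, leaving precisely $c$. The main technical obstacle is this last step: carefully matching the exponential weights $e^{-2\pi bs} - e^{-2\pi b(t+s)}$ produced by the commutation formula with the arguments of the $G_b$-functions so that tau-beta applies, and justifying the attendant contour manipulations. Once this is accomplished, both the pentagon and the second conjugation identity follow as described.
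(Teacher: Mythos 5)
Your overall architecture partially coincides with the paper's: the commutation formula $V^\a U V^{-\a}=U+q\inv(q^{-2\a}-1)V\inv c$ is correct (it is the paper's inductively derived $V^nU=UV^n+q(1-q^{2n})q^{-2n}cV^{n-1}$ rewritten), the extension to complex powers by functional calculus is exactly the move the paper makes, and the deduction of the pentagon from $g_b(U+c)=g_b(U)g_b(c)$ is the paper's. The divergence is in how the conjugation identity is extracted, and there your mechanism breaks. The paper never forms the double integral $g_b(V)Ug_b^*(V)$: it proves the one-sided intertwining relation $g_b(V)U=(U+c)g_b(V)$ by pushing $U$ through the single integral defining $g_b(V)$, shifting the contour $t\mapsto t-\bi b$ so that $V^{\bi b\inv t-1}$ becomes $V^{\bi b\inv t}$, and absorbing the resulting factors with the functional equation $G_b(Q+\bi t+b)=(1-e^{2\pi\bi b(Q+\bi t)})G_b(Q+\bi t)$; the prefactors collapse to $q\inv$, the leftover integral is again $g_b(V)$, and the conjugation identity follows by multiplying by $g_b^*(V)$ on the right.

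Your route instead requires evaluating the $c$-piece of a double integral, and the step you propose for it --- reflection plus Lemma \ref{tau} collapsing the inner integral ``to a scalar'' --- fails as stated. After the change of variables the inner integral sits exactly at (or past) the boundary of the convergence region $Re(\b)>0$, $Re(\a+\b)<Q$ of the tau-beta theorem, and the formal right-hand side $G_b(\a)G_b(\b)/G_b(\a+\b)$ lands on poles and zeros of $G_b$: for the two terms of $(e^{2\pi bt}-1)$ one obtains factors $1/G_b(0)$ and $1/G_b(-b)$ (equivalently $G_b(Q)$ and $G_b(Q+b)$ after reflection), all of which vanish. The naive evaluation therefore gives $0-0=0$, not a scalar; the genuine nonzero contribution is purely distributional --- a delta function in $r=t+s$ supported at $r=-\bi b$, of the kind extracted by Proposition \ref{delta} --- and without it the outer integral $\int V^{\bi b\inv r}(\cdots)\,dr$ cannot reduce to the single operator $c$. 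This could in principle be repaired with Proposition \ref{delta} and careful contour bookkeeping, but it is substantially harder than, and strictly dominated by, proving $g_b(V)U=(U+c)g_b(V)$ directly as above.
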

Note that if $UV=q^2VU$, these reduce to the usual pentagon relations \eqref{qsum1}-\eqref{qpenta}.
\begin{proof}
By induction, we calculate formally
\Eqn{
VU=&UV-(q-q\inv)c,\\
V^nU=&V^{n-1}UV-(q-q\inv)V^{n-1}c\\
=&V^{n-2}UV^2-(q-q\inv)V^{n-2}cV+(q-q\inv)V^{n-1}c\\
=&...\\
=&UV^n-(q-q\inv)(q^{2-2n}+q^{4-2n}+...+1)cV^{n-1}\\
=&UV^n-q(1-q^{-2n})cV^{n-1}\\
=&UV^n+q(1-q^{2n})cq^{-2n}V^{n-1}.
}
Hence by virtue of functional calculus, we can replace the power by complex powers $\bi b\inv t$, and apply the integration formula for $g_b(x)$. We obtain
\Eqn{
g_b(V)U=&Ug_b(V)+qc\int_{\R+\bi 0} (1-q^{2\bi b\inv t})q^{-2\bi b\inv t}e^{-\pi \bi t^2}\frac{V^{\bi b\inv t-1}}{G_b(Q+\bi t)}dt\\
=&Ug_b(V)+qc\int_{\R+\bi 0} (1-e^{-2\pi b(t-\bi b)})e^{2\pi b(t-\bi b)}e^{-\pi \bi (t-\bi b)^2}\frac{V^{\bi b\inv t}}{G_b(Q+\bi t+b)}dt\\
=&Ug_b(V)+qc\int_{\R+\bi 0} \frac{(1-e^{-2\pi b(t-\bi b)})e^{2\pi bt}q^{-2}e^{-2\pi bt}q }{(1-e^{2\pi \bi b(Q+\bi t)})}e^{-\pi \bi t^2}\frac{V^{\bi b\inv t}}{G_b(Q+\bi t)}dt\\
=&(U+c)g_b(V).}
Hence
$$g_b(V)Ug_b^*(V)=U+c$$
and
$$g_b(V)g_b(U)g_b^*(V)=g_b(U+c)=g_b(U)g_b(c).$$
Similarly, we also have
$$g_b^*(U)Vg_b(U)=c+V.$$
\end{proof}
\subsection{Non-simply-laced case}\label{sec:qdlog:BCFG}
In the non-simply-laced case, more $q$-commutators are involved. By applying the same techniques in the previous subsections repeatedly, we have the following relations.
\begin{Prop}\label{UVcd}
Let $U,V$ be positive operators and define $c,d$ to be
$$c=\frac{[U,V]}{q-q\inv},\tab d=\frac{q\inv c V-qVc}{q^2-q^{-2}},$$
such that $c,d$ are positive, and the following relations hold:
$$Uc=q^4 cU,\tab cd=q^4 dc,\tab dV=q^4 Vd.$$
Then we have
\Eq{g_b(V)Ug_b^*(V)=U+c+d.}
Similarly, we have
\Eq{g_b^*(U)Vg_b(U)=d'+c+V,}
where $$d'=\frac{q\inv Uc-q cU}{q^2-q^{-2}},$$ with
$$Vc=q^{-4}cV, \tab cd'=q^{-4}d'c, \tab d'U=q^{-4}Ud'.$$

Note that when $UV=q^4VU$, these reduce to the relation \eqref{qsum3}-\eqref{qsum4}.
\end{Prop}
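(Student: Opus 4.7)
I follow the strategy of Proposition \ref{genpenta}: derive a closed form for $V^n U$, then use the Fourier representation of $g_b$ from Lemma \ref{FT} to promote $n$ to the continuous parameter $\bi b\inv t$. The new feature is that the commutation $Vc = q^{-2} cV - q\inv (q^2 - q^{-2}) d$ (just the rearranged definition of $d$) feeds a $d$-term back into the induction, so a two-term ansatz carrying both $c V^{n-1}$ and $d V^{n-2}$ is required. The assumed commutations $Uc = q^4 cU$, $cd = q^4 dc$, and $dV = q^4 Vd$ are precisely what is needed to keep the induction closed at second order, i.e.\ to prevent any $V^{n-3}$ term from being generated.

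\textbf{Key steps.} First, I would establish by induction the closed form
\Eq{V^n U = UV^n + \alpha_n\, c V^{n-1} + \beta_n\, d V^{n-2},\label{planrec}}
which reduces to a scalar linear recursion that solves with $\alpha_0 = \beta_0 = 0$ to
\Eqn{\alpha_n = -q(1 - q^{-2n}),\qquad \beta_n = q^2(1 - q^{-2n})(1 - q^{2-2n}).}
Both coefficients vanish at $n = 0$, and $\beta_n$ has an additional zero at $n = 1$. Inserting \eqref{planrec} with $n \mapsto \bi b\inv t$ into $g_b(V) U = \int e^{-\pi \bi t^2} G_b(Q+\bi t)\inv V^{\bi b\inv t} U\, dt$, the leading piece reproduces $U g_b(V)$. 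For the $c$-term I shift the contour $t \mapsto t - \bi b$ so that $V^{\bi b\inv t - 1}$ becomes $V^{\bi b\inv t}$; the zero of $\alpha_n$ at $n = 0$ cancels the potential residue at $t = 0$, and one application of $G_b(x+b) = (1 - e^{2\pi \bi b x}) G_b(x)$ simplifies the integrand to produce $c\, g_b(V)$, exactly as in Proposition \ref{genpenta}. For the $d$-term I shift by $2\bi b$ and apply the functional equation twice; the double zero of $\beta_n$ at $n = 0, 1$ kills both potential residues at $t = 0$ and $t = -\bi b$, and the remaining $q$-factors telescope to $d\, g_b(V)$. Assembling these three pieces yields $g_b(V) U g_b^*(V) = U + c + d$. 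The companion identity $g_b^*(U) V g_b(U) = d' + c + V$ follows from the same induction with the roles of $U$ and $V$ interchanged, producing $d'$ in place of $d$.

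\textbf{Main obstacle.} The algebraic recursion is routine once the correct ansatz is chosen; the delicate part is the bookkeeping of $q$-powers through the double contour shift. One must verify that the shifted $\beta(s - 2\bi b)$ contributes the factors $(1 - q^{-2} e^{2\pi bs})(1 - q^{-4} e^{2\pi bs})$, the shifted Gaussian contributes $q^6 e^{-4\pi bs}$, and the ratio $G_b(Q + \bi s + 2b)/G_b(Q + \bi s) = (1 - q^2 e^{-2\pi bs})(1 - q^4 e^{-2\pi bs})$ exactly cancels these up to the overall scalar $d$; any sign error or misplacement of the phase $e^{2\pi \bi bQ} = q^2$ would destroy this cancellation. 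The conceptual point that makes the proof work at all is that the hypothesized fourfold $q$-relations $Uc = q^4 cU$, $cd = q^4 dc$, $dV = q^4 Vd$ match precisely the spacing between the first two poles of $1/G_b(Q + \bi t)$ in the $b$-direction, so that the zeros of $\alpha_n$ and $\beta_n$ at integer values of $n$ are exactly what is needed to neutralize all residues arising from the contour shifts.
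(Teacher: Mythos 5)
Your proposal is correct and follows exactly the route the paper intends: the paper gives no detailed argument for Proposition \ref{UVcd}, saying only that one applies the techniques of Proposition \ref{genpenta} repeatedly, and your two-term ansatz $V^nU=UV^n+\alpha_n cV^{n-1}+\beta_n dV^{n-2}$ with $\alpha_n=-q(1-q^{-2n})$, $\beta_n=q^2(1-q^{-2n})(1-q^{2-2n})$ together with the double contour shift is precisely that iteration carried out (and your $q$-power bookkeeping checks out, the total prefactor of the $d$-term collapsing to $1$). The only point worth adding is that when $b^{-1}<2b$ the double shift also crosses the pole of $1/G_b(Q+\bi t)$ at $t=-\bi b^{-1}$, but this residue is likewise killed since $1-e^{2\pi bt}$ vanishes there by periodicity.
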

Even more generally for the type $G_2$ case, by defining $e=\frac{q^{-2}dV-q^2Vd}{q^3-q^{-3}}$ such that $e$ is positive and $$Uc=q^6cU, \tab cd=q^6dc, \tab de=q^6ed, \tab eV=q^6Ve,$$ we have
$$g_b(V)Ug_b^*(V)=U+c+d+e.$$ Similarly relations hold for the other $q$-commutators $d'$ and $e'$.

Finally, we have the following useful functional relations generalizing the $q$-exponential relation.
\begin{Prop}\label{genexp}
Let $U,c,d,d'$ be as in Proposition \ref{UVcd}. Let $q=e^{\pi \bi b_s^2}$ and $q^2=e^{\pi \bi b_l^2}$. Then we have
\Eq{
g_{b_s}(U+c)=&g_{b_s}(U)g_{b_l}(d')g_{b_s}(c),\\
g_{b_l}(U+c+d)=&g_{b_l}(U)g_{b_s}(c)g_{b_l}(d).
}
\end{Prop}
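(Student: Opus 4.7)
The plan is to adapt the Fourier-analytic strategy that proved Proposition \ref{genpenta}: expand both sides via the integral representation of Lemma \ref{FT}, reduce every operator monomial to a normal form in the ordered variables $U, c$ (and $d$ where present), and match the two sides using the tau-beta theorem.

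For the first identity $g_{b_s}(U+c)=g_{b_s}(U)g_{b_l}(d')g_{b_s}(c)$, the essential observation is that although $U$ and $c$ do not satisfy the natural Weyl commutation at level $b_s$ (they satisfy $Uc=q^4cU=q_l^2cU$), they do satisfy it at level $b_l$. I would first use the definition of $d'$ from Proposition \ref{UVcd} together with $Uc=q^4cU$ to write $d'$ as an explicit scalar multiple of $cU$, and derive the commutations $Ud'=q^4d'U$ and $cd'=q^{-4}d'c$. On the left-hand side, the $q$-binomial theorem (Lemma \ref{qbinom}) applied at level $b_l$ expands $(U+c)^{\bi b_l\inv s}$ as a $q_l$-beta integral in $U^{a}c^{b}$; after the change of variable $s=(b_l/b_s)t$, this is substituted into the $g_{b_s}$-integral of Lemma \ref{FT} to yield a double integral against the kernel $e^{-\pi\bi t^2}/G_{b_s}(Q_s+\bi t)$. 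On the right-hand side, each of the three factors is written as its own $g_b$-integral, and the explicit form of $d'$ together with the $q^4$-commutations is used to commute everything into the same normal form $U^{a}c^{b}$. The desired equality then reduces to an analytic identity between $G_{b_s}$ and $G_{b_l}$ factors, which I would verify by collapsing the auxiliary integrations via the tau-beta theorem (Lemma \ref{tau}), together with the functional equation \eqref{funceq} and reflection property \eqref{reflection} of $G_b$.

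For the second identity $g_{b_l}(U+c+d)=g_{b_l}(U)g_{b_s}(c)g_{b_l}(d)$, the same strategy applies: $U,c,d$ form a chain in which consecutive entries $q_l^2$-commute, so iterated $q$-binomial expansions at level $b_l$ reduce $(U+c+d)^{\bi b_l\inv s}$ to a nested integral in $U^a c^b d^e$; this is then matched term-by-term against the triple-integral expansion of the right-hand side, in which the middle factor $g_{b_s}(c)$ supplies the mixing between the two base scales.

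The main obstacle I anticipate is the precise bookkeeping needed to reconcile the two scales inside the $q$-beta kernels. The scaling factor $b_l/b_s=\sqrt{2}$ (or $\sqrt{3}$ in type $G_2$) arising from the change of variables produces awkward prefactors in the $q_l$-beta functions that must be exactly canceled by the middle $g_{b_l}(d')$ (respectively $g_{b_s}(c)$) factor on the right-hand side. Verifying this cancellation essentially amounts to iterated uses of the tau-beta theorem, with contour displacements and convergence justified by the asymptotic properties of $G_b$ recalled in Section \ref{sec:prelim:qdlog}, and I expect the 3-2 relation of Proposition \ref{32} to intervene in at least one of the collapse steps, as is typical for identities mixing $G_{b_s}$ and $G_{b_l}$.
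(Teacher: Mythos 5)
Your overall strategy --- expand everything with Lemma \ref{FT}, normal-order the monomials, and match kernels --- is the right genre of argument for identities of this type, but it has a gap at exactly the point you flag as the ``main obstacle'', and the tools you invoke cannot close it. After you apply Lemma \ref{qbinom} at level $b_l$ to $(U+c)^{\bi b_s\inv t}=(U+c)^{\bi b_l\inv (\sqrt2 t)}$ and normal-order the right-hand side, the coefficient of a fixed monomial $U^{\bi b_l\inv \a}c^{\bi b_l\inv \b}$ on the left is a closed expression involving $G_{b_l}(-\bi \a)$, $G_{b_l}(-\bi \b)$, $G_{b_l}(-\bi (\a+\b))$ and $G_{b_s}(Q_s+\bi (\a+\b)/\sqrt2)$, while on the right it is a residual one-dimensional integral whose integrand contains $G_{b_s}(Q_s+\bi (\a-t_2)/\sqrt2)$, $G_{b_s}(Q_s+\bi (\b-t_2)/\sqrt2)$ \emph{and} $G_{b_l}(Q_l+\bi t_2)$ simultaneously. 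Lemma \ref{tau} and Proposition \ref{32} are single-scale identities (every $G$ in them carries the same $b$), and the only cross-scale relation available in the paper is the self-duality $G_b=G_{b\inv}$, which connects $b_l$ to $b_l\inv$, not to $b_s=b_l/\sqrt2$. To collapse that residual integral you need an extra ingredient --- in practice a duplication formula expressing $G_{b_l}(z)$ as a product of two $G_{b_s}$'s with shifted arguments (the noncompact analogue of the Gauss--Legendre formula), or equivalently a genuinely mixed-$b$ beta integral --- and nothing of the sort appears in the paper's toolkit. A secondary issue: for the second identity, your ``iterated $q$-binomial'' expansion of $((U+c)+d)^{\bi b_l\inv s}$ needs the commutation of $U$ with $d$ and of $U+c$ with $d$, which are not clean Weyl relations at level $b_l$ (if they were, the middle factor would come out as $g_{b_l}(c)$ rather than $g_{b_s}(c)$), so Lemma \ref{qbinom} does not apply directly there.

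You also miss the one reduction the paper actually makes. The paper does not treat the two identities independently: it observes that under the transcendental map $X\mapsto X^{1/b_i^2}$ (Lemma \ref{qbi} together with the self-duality \eqref{selfdualg}), with long and short roots interchanged as in the Langlands-dual picture of \cite{Ip3}, the first identity at parameters $(b_s,b_l)$ is carried to the second identity at the dual parameters $(b_l\inv,b_s\inv)$; for instance $(U+c)^{1/b_s^2}=\bigl(U^{1/b_l^2}+c^{1/b_l^2}\bigr)^2$ expands into a three-term sum of exactly the shape $\til{U}+\til{c}+\til{d}$. So only one of the two identities requires an analytic verification, and that one is meant to ride on the explicit computations behind Proposition \ref{UVcd} rather than on a fresh kernel-matching. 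Incorporating this duality would halve your work and, more importantly, would isolate the single mixed-scale integral identity you actually need to establish.
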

Using Proposition \ref{genpenta} and \ref{UVcd}, these two relations are related by the transcendental relations in virtue with the approach in \cite{Ip3}, where the long roots and short roots are interchanged, and the self-duality \eqref{selfdualg} of $g_b(x)$.

We will leave the analogue of these functional relations in the case of type $G_2$ to the interested reader.

\section{Quantum Weyl element and Lusztig's isomorphism}\label{sec:QWeyl}
The starting point of the present work is the observation of the positivity appearing in the generators $\be_{ij}$ corresponding to the non-simple roots $\a_i+\a_j$. They are given by composition of certain $q$-commutators of simple root generators $\be_i$ and $\be_j$, and in turn is given by the Lusztig's isomorphism. Therefore to prove positivity, we show that Lusztig's isomorphism can actually be implemented by conjugations of certain elements $w_i$, which is known as the quantum Weyl element. In the compact case this is done in \cite{KR} and \cite{LS} by means of semi-simplicity of $\cU_q(\sl_2)$-modules in $\cU_q(\g)$-modules. In the current paper, we show that the $w_i$ can actually be implemented as a unitary operator, hence preserving positivity. The construction requires explicit calculation of the ribbon element $u$ and $v$ in Section \ref{sec:QWeyl:ribbon}, as well as the branching rules of $\cU_{q\til{q}}(\sl(2,\R))\subset\cU_{q\til{q}}(\g_\R)$ as positive representations in Section \ref{sec:QWeyl:branching} since we no longer have obvious semi-simplicity.
\subsection{Positivity of $\be_{ij}$}\label{sec:QWeyl:pos}
It is well-known that the Lusztig's isomorphism $T_i$ defined in Definition \ref{Lus} essentially gives the generators of the canonical basis of $\cU_q(\g)$. In the present case of positive representations, we also require the generators to be positive essentially self-adjoint. 

In the simply-laced case, we observe the following:
\begin{Prop}\label{eij} Fixed a positive representation $\cP_\l$. Then
\Eq{\be_{ij}:=\frac{[\be_j,\be_i]_{q^\half}}{q-q\inv}=\frac{q^{\half}\be_j\be_i-q^{-\half}\be_i\be_j}{q-q\inv}}
is positive essentially self-adjoint, and also satisfies the transcendental relations
\Eq{\til{\be_{ij}}:=\be_{ij}^{\frac{1}{b^2}}=\frac{\til{q}^{\half}\til{\be_j}\til{\be_i}-\til{q}^{-\half}\til{\be_i}\til{\be_j}}{\til{q}-\til{q}\inv}.}
\end{Prop}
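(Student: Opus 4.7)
The plan is to verify the claim by explicit computation in the canonical form of positive representations and to extract positivity and the transcendental relation via iterated application of Lemma \ref{qbi}.

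First I would reduce to rank two. Since $\be_{ij}$ only involves the two simple generators $\be_i,\be_j$, and since positive representations corresponding to distinct reduced expressions for $w_0$ are unitarily equivalent, I may choose a reduced word in which $s_j s_i$ appears adjacently so that $\be_i,\be_j$ take precisely the $A_2$ form of Proposition \ref{typeA2}, tensored trivially with operators on the remaining coordinates. It thus suffices to prove the claim in the $A_2$ model.

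Next I would perform the explicit computation. Writing $\be_i=\sum_k X_k$ and $\be_j=\sum_l Y_l$ with each $X_k=e^{\xi_k}$, $Y_l=e^{\zeta_l}$ a positive exponential of a self-adjoint linear functional, the Heisenberg brackets $[\xi_k,\zeta_l]$ take values in $\{0,\pm\pi b^2\bi\}$, so each pair satisfies $X_k Y_l = q^{\pm 1}Y_l X_k$ or commutes. Expanding
\Eqn{q^{1/2}\be_j\be_i - q^{-1/2}\be_i\be_j = \sum_{k,l}\(q^{1/2}Y_l X_k - q^{-1/2}X_k Y_l\),}
the pairs with $X_k Y_l = qY_l X_k$ cancel identically, while the remaining pairs each contribute $q^{1/2}(q-q^{-1})X_k Y_l$. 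Dividing by $q-q^{-1}$ and absorbing the BCH factor into the exponent yields
\Eqn{\be_{ij} = \sum_r e^{\eta_r}}
for suitable self-adjoint linear combinations $\eta_r$; each summand is thus positive essentially self-adjoint.

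Finally I would conclude via Lemma \ref{qbi}. A direct Heisenberg-bracket calculation shows that, after ordering, $e^{\eta_r}e^{\eta_s} = q^2 e^{\eta_s}e^{\eta_r}$ holds uniformly for $r<s$. Grouping the summands pairwise and applying Lemma \ref{qbi} iteratively --- valid precisely because all cross-commutators have the same sign and common magnitude $2\pi b^2\bi$ --- proves positive essential self-adjointness of $\be_{ij}$ together with $\be_{ij}^{1/b^2} = \sum_r e^{\eta_r/b^2}$. Since the substitution $b\mapsto b^{-1}$ in the canonical form sends each $\be_i$ to $\til{\be_i}$ and each $X_k Y_l$ to its tilded analogue, this right-hand side coincides with $(\til{q}^{1/2}\til{\be_j}\til{\be_i}-\til{q}^{-1/2}\til{\be_i}\til{\be_j})/(\til{q}-\til{q}^{-1})$, proving the transcendental relation.

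The main obstacle is verifying the uniform sign and magnitude of the Heisenberg brackets $[\eta_r,\eta_s]$ for the surviving exponents: while transparent in the $A_2$ model (where all cross-brackets equal $+2\pi b^2\bi$), one must confirm that the tensor-product reduction in the first step does not disturb this structure, i.e.\ that Weyl commutators between the $\{s_j,s_i\}$-block coordinates and the remaining coordinates all vanish. This should follow from the canonical form of \cite{Ip2}, but requires direct verification.
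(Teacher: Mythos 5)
Your proposal is correct and follows essentially the same route as the paper: reduce to the rank-two ($A_2$) model via a suitable choice of reduced word, expand $\be_{ij}$ explicitly as a sum of positive exponentials (the paper records the resulting four terms directly), and invoke Lemma \ref{qbi} using the successive $q^2$-commutativity of the summands to obtain essential self-adjointness and the transcendental relation. The only difference is presentational — you spell out the pairwise cancellation mechanism in the $q$-commutator and flag the uniformity of the surviving Weyl brackets as a point to verify, which the paper simply asserts by displaying the computed expression.
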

\begin{proof} Without loss of generality, we can choose $w_0=w's_js_is_j$. Then it suffices to look at the representation in the case of type $A_2$ given by Proposition \ref{typeA2}. We obtain
\Eq{\be_{ij}=&e^{-\pi b(v-2w+2p_v+2p_w)}+e^{-\pi b(v+2p_v+2p_w)}+\\
&\tab +e^{-\pi b(u-w+2p_u+2p_v)}+e^{-\pi b(-u-w+2p_u+2p_v)},}
which is evidently positive. Since each term $q^2$ commute with the terms on its right, by Lemma \ref{qbi}, the operator is essentially self-adjoint, and satisfy the transcendental relation.
\end{proof}
We have similar observations in the non-simply-laced as well. Again it suffices to consider rank 2 case.
\begin{Prop}\label{eij2} In general, define the operators
\Eq{\be_{ij}=&(-1)^{a_{ij}}\left[\left[\be_i,...[\be_i,\be_j]_{q_i^{\frac{a_{ij}}{2}}}\right]_{q_i^{\frac{a_{ij}+2}{2}}}...\right]_{q_i^{\frac{-a_{ij}-2}{2}}}\prod_{k=1}^{-a_{ij}}(q_i^k-q_i^{-k})\inv.\label{eij3}}
Then it is positive essentially self-adjoint, and satisfy the generalized transcendental relations, where $\be_{ij}^{\frac{1}{b_i^2}}$ is given by the same expression as $\be_{ij}$ with all $\be_i$ replaced by $\til{\be_i}$, $q_i$ replaced by $\til{q_i}$, and $a_{ij}$ replaced by $a_{ji}$.
\end{Prop}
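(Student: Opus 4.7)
The plan is to mimic the strategy used for Proposition \ref{eij}: reduce to the rank $2$ situation, write $\be_{ij}$ explicitly as a sum of positive Weyl-type exponentials, and then invoke Lemma \ref{qbi} repeatedly to deduce essential self-adjointness and the transcendental relation.

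First, since the definition of $\be_{ij}$ in \eqref{eij3} only involves the two generators $\be_i,\be_j$, I will pick a reduced expression of the longest element of the form $w_0=w'\cdot u_{ij}$, where $u_{ij}$ is the longest element of the Weyl group of the rank $2$ subsystem spanned by $\a_i,\a_j$ (so $u_{ij}=s_is_js_i s_j$ in type $B_2/C_2$ and $u_{ij}=s_is_js_is_js_is_j$ in type $G_2$). With this choice, the simple generators $\be_i,\be_j$ act via the variables associated with the rank $2$ tail of $w_0$ and commute past the remaining variables. Thus it suffices to verify positivity and the transcendental relation for $\be_{ij}$ inside the rank $2$ positive representation, which is given explicitly for type $B_2$ in the Proposition just before Section \ref{sec:prelim:qdlog} (and an analogous formula holds for $G_2$).

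Next I will compute $\be_{ij}$ term by term. In the $B_2$ case one needs either one or three nested $q$-commutators (depending on which of $i,j$ is short), and the prefactor $\prod_{k=1}^{-a_{ij}}(q_i^k-q_i^{-k})\inv$ is chosen precisely so that each inner commutator, divided by $q_i^k-q_i^{-k}$, produces a sum of monomials of the form $[\mathrm{linear}]e(\mathrm{linear})$ in the positive representation rather than a difference. Expanding the iterated commutator one layer at a time, the cross terms cancel pairwise and what remains is a sum of exponentials $e^{\pi b_\ast(\cdots)}$ in the canonical variables with all coefficients manifestly positive. One then reads off that consecutive summands $U_1,U_2,\ldots,U_m$ satisfy $U_rU_s=q_i^2U_sU_r$ for $r<s$, so by iterated application of Lemma \ref{qbi} the full sum $\be_{ij}=\sum_r U_r$ is positive essentially self-adjoint and
\[\be_{ij}^{1/b_i^2}=\sum_r U_r^{1/b_i^2}.\]
Matching this against the corresponding expression for $\til{\be_i},\til{\be_j}$ (with $q_i$ replaced by $\til{q_i}$ and the long/short roots swapped, which changes $a_{ij}$ into $a_{ji}$) gives the generalized transcendental relation. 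The $G_2$ case proceeds in exactly the same manner, with longer but still bookkeepable nested commutators, and I would simply state the explicit $G_2$ formula and leave the analogous cancellation to the reader.

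The main obstacle is purely combinatorial: verifying by direct expansion that the nested $q$-commutators \eqref{eij3} collapse into a manifestly positive sum of ordered exponentials, and that the successive summands pairwise satisfy the $q_i^2$-commutation needed to chain Lemma \ref{qbi}. Once this bookkeeping is done, essential self-adjointness and the transcendental relation are immediate. Since the analogous check is precisely what makes Proposition \ref{eij} work in the simply-laced case, and since we only need to handle rank $2$, the whole argument is finite and algorithmic.
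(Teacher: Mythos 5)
Your overall strategy---reduce to the rank $2$ subsystem and verify everything by direct computation in the explicit positive representations of type $B_2$ and $G_2$---is exactly what the paper does; its proof of Proposition \ref{eij2} is a one-line appeal to precisely this computation. The positivity part of your argument is fine in outline: expanding \eqref{eij3} in the explicit $B_2$ representation does collapse to a sum of manifestly positive Weyl-type exponentials, mirroring the computation displayed for Proposition \ref{eij}.

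There is, however, a concrete gap in your treatment of the transcendental relation. You claim that the summands $U_1,\dots,U_m$ of $\be_{ij}$ pairwise satisfy $U_rU_s=q_i^2U_sU_r$ and that Lemma \ref{qbi} then gives $\be_{ij}^{1/b_i^2}=\sum_r U_r^{1/b_i^2}$, after which the result follows by ``matching''. In the non-simply-laced case the summands are mixed exponentials of the form $e^{\pi b_s(\cdots)+\pi b_l(\cdots)}$ (cf.\ Definition \ref{usul}), so the pairwise commutation factors are not uniformly $q_i^2$ for a single $q_i$; Lemma \ref{qbi} requires $UV=q^2VU$ for the \emph{same} $b$ whose inverse square appears in the exponent, and a pair that $q_l^2$-commutes is $q_s^4$-commuting, which the lemma does not cover. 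More importantly, the ``matching'' step is not a formality: since $a_{ij}\neq a_{ji}$ exactly when one of $i,j$ is short, the dual expression has a \emph{different depth} of nested $q$-commutators (e.g.\ in $B_2$, two levels for $\be_{ij}$ versus one for $\be_{ij}^{1/b_i^2}$), so there is no summand-by-summand correspondence to read off. This identity is genuinely asymmetric and is what the paper verifies by explicitly comparing the $B_2$ computation against the Langlands-dual type $C_2$ expressions from \cite{Ip3}. Your proof would need to carry out (or at least correctly describe) that comparison rather than reduce it to an application of Lemma \ref{qbi}.
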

\begin{proof} These are calculated directly from the explicit expression of the positive representation of type $B_2, G_2$ and also the transcendental relation using expressions of type $C_2$ given in \cite{Ip3}.
\end{proof}

We note that $\be_{ij}=T_i(\be_j)$ up to some constant. Therefore if we can show that $T_i$ are given by \emph{inner automorphism} of some unitary element, then both positivity and transcendental relations of the remaining generators in higher rank will be immediate. This is achieved by the use of the quantum Weyl elements described in Section \ref{sec:QWeyl:Weyl}.

Finally we define $\bf_{ij}$ with the exact same formula \eqref{eij3} with $\be$ replaced by $\bf$. Then using the Weyl element $w_0$ derived in Section \ref{sec:QWeyl:Weyl} we see that it also satisfies all the properties enjoyed by $\be_{ij}$.
\subsection{Calculation of the ribbon element $u$ and $v$ for $\cU_{q\til{q}}(\sl(2,\R))$}\label{sec:QWeyl:ribbon}
In this section we restrict the attention to a fixed positive representation $P_\l$ of $\cU_{q\til{q}}(\sl(2,\R))$. Let the $R$ operator be given by \eqref{RR}. Explicitly, it can be written as
\Eq{\bQ^\half \left(\int_{\R+\bi 0}\frac{e^{-\pi \bi t^2}\be^{\bi b\inv t}\ox \bf^{\bi b\inv t}}{G_b(Q+\bi t)}dt \right)\bQ^\half,\label{Rbasis1}}
where 
\Eq{\bQ=q^{\frac{H\ox H}{2}}=\sum_{n=0}^\oo \left(\frac{\pi \bi b^2}{2}\right)^n\frac{H^n\ox H^n}{n!},\label{Rbasis2}}
which we will write it informally as $R=\sum_k \a_k\ox \b_k$.

We wish to calculate the element 
\Eq{u=m^{op}\circ (1\ox S)R=\sum_k S(\b_k)\a_k,} 
which is crucial in the analysis of quasi-triangular Hopf algebra. Here we will first calculate the expression formally using an extension of the antipode $S$. In Section \ref{sec:Cstar}, we will then define $u$ rigorously as an element in certain multiplier Hopf-* algebra.

From the expression of $u$, it means we need to calculate the action of $\bf^{\bi b\inv t}\be^{\bi b\inv t}$, in other words we need to calculate the action of $\be^{\bi b\inv t}$ and $\bf^{\bi b\inv t}$ under the positive representation $\cP_\l$ of $\cU_{q\til{q}}(\sl(2,\R))$. Furthermore we also need the actual effect of the antipode. We introduce the expression
\Eq{S(\be)=e^{\pi \bi bQ}\be=-q\be,\tab S(\bf)=e^{-\pi \bi bQ}\bf=-q\inv \bf, \tab S(H)=-H. \label{SS}}
consistent with the usual definition, and define $S$ on the complex powers by
\Eq{S(\be^{\bi b\inv t}):=e^{-\pi Qt}\be^{\bi b\inv t}, \tab S(\bf^{\bi b\inv t}):=e^{\pi Qt}\bf^{\bi b\inv t}.}
Again the definition is rigorous once we impose the setting of multiplier Hopf-* algebra in Section \ref{sec:Cstar}.

\begin{Lem} The action of $\be^{\bi b\inv t}$ and $\bf^{\bi b\inv t}$ on $f(x)$ is given by
\Eq{\be^{\bi b\inv t}\cdot f(x)=&e^{\pi \bi (x-\l) t}e^{\frac{-\pi \bi t^2}{2}}\frac{G_b(\frac{Q}{2}+\bi x-\bi \l)}{G_b(\frac{Q}{2}+\bi x-\bi \l-\bi t)}f(x-t),\\
\bf^{\bi b\inv t}\cdot f(x)=&e^{\pi \bi (x+\l) t}e^{\frac{\pi \bi t^2}{2}}\frac{G_b(\frac{Q}{2}+\bi x+\bi \l+\bi t)}{G_b(\frac{Q}{2}+\bi x+\bi \l)}f(x+t).}
Note that these actions are unitary transformations.
\end{Lem}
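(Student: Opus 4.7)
The plan is to expand $\be^{\bi b\inv t}$ via the $q$-binomial theorem after writing $\be$ as a sum of two $q$-commuting positive exponentials, then evaluate the resulting integral via tau-beta. From Proposition \ref{canonicalsl2}, $\be = U + V$ with $U := e^{\pi b(-u+\l-2p)}$ and $V := e^{\pi b(u-\l-2p)}$ both positive essentially self-adjoint on $L^2(\R)$; a direct Baker--Campbell--Hausdorff computation using $[u,p] = -\frac{1}{2\pi\bi}$ gives $UV = q^2 VU$, so Lemma \ref{qbinom} applies and produces
\Eqn{\be^{\bi b\inv t} = \int_C \frac{G_b(-\bi\tau)G_b(\bi\tau - \bi t)}{G_b(-\bi t)}\,U^{\bi b\inv(t-\tau)}V^{\bi b\inv\tau}\,d\tau.}

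The next step is to compute $U^s$ and $V^s$ on $f(x)$ by factoring each via BCH into a multiplication $e^{\mp s\pi bx}$, a translation $f(x)\mapsto f(x+\bi sb)$, and a scalar phase $e^{\mp\bi s^2\pi b^2/2}$. Specializing $s\in\bi b\inv\R$ turns the translation into a real shift in $x$, and composition of the two factors collapses onto $f(x-t)$ with a quadratic-in-$\tau$ phase:
\Eqn{U^{\bi b\inv(t-\tau)}V^{\bi b\inv\tau}f(x) = e^{\bi\pi[t(\l-x)+t^2/2 + 2\tau(x-\l-t)+\tau^2]}f(x-t).}

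The remaining $\tau$-integral is brought to tau-beta form by applying the reflection identity $G_b(-\bi\tau)G_b(Q+\bi\tau) = e^{-\bi\pi\tau^2 - \pi Q\tau}$ to cancel the Gaussian factor $e^{\bi\pi\tau^2}$. Lemma \ref{tau} with $\a = -\bi t$ and $\b = Q/2 - \bi(x-\l-t)$ evaluates the integral to $G_b(Q/2 - \bi(x-\l) + \bi t)/G_b(Q/2 - \bi(x-\l))$. A second use of the reflection identity on each $G_b$-factor rewrites this ratio as the claimed $G_b(Q/2+\bi x-\bi\l)/G_b(Q/2+\bi x-\bi\l-\bi t)$ at the cost of an additional phase $e^{\bi\pi[2t(x-\l)-t^2]}$, which combines with the leftover prefactor $e^{\bi\pi t(\l-x)+\bi\pi t^2/2}$ to yield exactly $e^{\bi\pi(x-\l)t}e^{-\bi\pi t^2/2}$. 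The formula for $\bf^{\bi b\inv t}$ follows from an identical argument applied to the decomposition $\bf = e^{\pi b(u+\l+2p)}+e^{\pi b(-u-\l+2p)}$, or alternatively by the unitary involution $u \mapsto -u$, $p \mapsto -p$ on $L^2(\R)$.

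The main obstacle is the careful bookkeeping of BCH-induced phases through the composition and the two uses of the reflection identity, which must be arranged to produce the precise normalization stated. Convergence of the tau-beta integral requires $\text{Re}(\b) > 0$ and $\text{Re}(\a+\b) < Q$, restricting the initial range of $t$; the identity then extends to general $t$ in appropriate strips by analytic continuation, and for $t\in\R$ the operator is manifestly unitary since the $G_b$-ratio is a pure phase by \eqref{gb1} and the remaining factor is multiplication by a phase times a real translation.
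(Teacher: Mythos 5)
Your proof is correct, but it takes a genuinely different route from the paper's. The paper conjugates the two-term sum into a single exponential using the quantum exponential relation \eqref{qsum1}, writing $\be = g_b^*(e^{-2\pi b(x-\l)})\,e^{\pi b(x-\l)-2\pi bp}\,g_b(e^{-2\pi b(x-\l)})$; the complex power is then trivial (a shift operator conjugated by a quadratic phase), and the $G_b$-ratio appears directly as the ratio of the two $g_b$-conjugators before and after the shift, with no integral to evaluate. You instead expand $(U+V)^{\bi b\inv t}$ via the $q$-binomial theorem (Lemma \ref{qbinom}) and resum the $\tau$-integral via the tau-beta identity (Lemma \ref{tau}); I checked your intermediate phase $e^{\bi\pi[t(\l-x)+t^2/2+2\tau(x-\l-t)+\tau^2]}$, the choice $\a=-\bi t$, $\b=\tfrac{Q}{2}-\bi(x-\l-t)$, and both applications of the reflection formula \eqref{reflection}, and the phases do assemble to $e^{\pi\bi(x-\l)t}e^{-\pi\bi t^2/2}$ as claimed. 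The paper's route is shorter and makes unitarity manifest from \eqref{gb1} applied to the $g_b$-conjugators; yours is more computational but entirely self-contained given the two integral lemmas, and it has the side benefit of exhibiting the ``matrix coefficients'' of $\be^{\bi b\inv t}$ in the ordered basis $U^{\bi b\inv(t-\tau)}V^{\bi b\inv\tau}$, which is the same mechanism used later for the coproduct and the Hopf pairing in Section \ref{sec:Cstar:pairing}. One small simplification: for real $t$ one has $\mathrm{Re}(\b)=\tfrac{Q}{2}>0$ and $\mathrm{Re}(\a+\b)=\tfrac{Q}{2}<Q$ automatically, so the tau-beta integral converges outright and the analytic-continuation step you invoke at the end is not actually needed.
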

\begin{proof}
\Eqn{
\be &= e^{-\pi b x+\pi b\l-2\pi b p}+e^{\pi b x-\pi b\l-2\pi b p}\\
&=g_b^*(e^{-2\pi b(x-\l)})e^{\pi bx-\pi b\l-2\pi bp}g_b(e^{-2\pi b(x-\l)}\\
&=g_b^*(e^{-2\pi b(x-\l)})e^{\pi \bi (\frac{x^2}{2}-\l x)}e^{-2\pi bp}e^{-\pi \bi (\frac{x^2}{2}-\l x)}g_b(e^{-2\pi b(x-\l)}),\\
\be^{\bi  b\inv t}\cdot f(x)&=g_b^*(e^{-2\pi b(x-\l)})e^{\pi \bi (\frac{x^2}{2}-\l x)}e^{-2\pi \bi tp}e^{-\pi \bi (\frac{x^2}{2}-\l x)}g_b(e^{-2\pi b(x-\l)})\cdot f(x)\\
&=g_b^*(e^{-2\pi b(x-\l)})e^{\pi \bi (\frac{x^2}{2}-\l x)}e^{-\pi \bi (\frac{(x-t)^2}{2}-\l (x-t))}g_b(e^{-2\pi b(x-\l-t)})f(x-t)\\
&=e^{\pi \bi (x-\l) t}e^{\frac{-\pi \bi t^2}{2}}\frac{g_b(e^{-2\pi b(x-\l-t)})}{g_b(e^{-2\pi b(x-\l)})}f(x-t)\\
&=e^{\pi \bi (x-\l) t}e^{\frac{-\pi \bi t^2}{2}}\frac{G_b(\frac{Q}{2}+\bi x-\bi \l)}{G_b(\frac{Q}{2}+\bi x-\bi \l-\bi t)}f(x-t).
}

Similarly,
\Eqn{
\bf &= e^{-\pi b x-\pi b\l+2\pi b p}+e^{\pi b x+\pi b\l+2\pi b p}\\
&=g_b(e^{-2\pi b(x+\l)})e^{\pi bx+\pi b\l+2\pi bp}g_b^*(e^{-2\pi b(x+\l)}\\
&=g_b(e^{-2\pi b(x+\l)})e^{-\pi \bi (\frac{x^2}{2}+\l x)}e^{2\pi bp}e^{\pi \bi (\frac{x^2}{2}+\l x)}g_b^*(e^{-2\pi b(x+\l)}),\\
\bf^{\bi  b\inv t}\cdot f(x)&=g_b(e^{-2\pi b(x+\l)})e^{-\pi \bi (\frac{x^2}{2}+\l x)}e^{2\pi \bi tp}e^{\pi \bi (\frac{x^2}{2}+\l x)}g_b^*(e^{-2\pi b(x+\l)})\cdot f(x)\\
&=g_b(e^{-2\pi b(x+\l)})e^{-\pi \bi (\frac{x^2}{2}+\l x)}e^{\pi \bi (\frac{(x+t)^2}{2}+\l (x+t))}g_b^*(e^{-2\pi b(x+\l+t)})f(x+t)\\
&=e^{\pi \bi (x+\l) t}e^{\frac{\pi \bi t^2}{2}}\frac{g_b(e^{-2\pi b(x+\l)})}{g_b(e^{-2\pi b(x+\l+t)})}f(x+t)\\
&=e^{\pi \bi (x+\l) t}e^{\frac{\pi \bi t^2}{2}}\frac{G_b(\frac{Q}{2}+\bi x+\bi \l+\bi t)}{G_b(\frac{Q}{2}+\bi x+\bi \l)}f(x+t).
}
\end{proof}
Hence combining, we have
\Eqn{
\bf^{\bi b\inv t}\be^{\bi b\inv t}&=e^{\pi \bi (x+\l) t}e^{\frac{\pi \bi t^2}{2}}\frac{G_b(\frac{Q}{2}+\bi x+\bi \l+\bi t)}{G_b(\frac{Q}{2}+\bi x+\bi \l)}e^{\pi \bi (x+t-\l) t}e^{\frac{-\pi \bi t^2}{2}}\frac{G_b(\frac{Q}{2}+\bi x-\bi \l+\bi t)}{G_b(\frac{Q}{2}+\bi x-\bi \l)}f(x)\\
&=e^{\pi \bi t(2x+t)}\frac{G_b(\frac{Q}{2}+\bi x+\bi \l+\bi t)}{G_b(\frac{Q}{2}+\bi x+\bi \l)}\frac{G_b(\frac{Q}{2}+\bi x-\bi \l+\bi t)}{G_b(\frac{Q}{2}+\bi x-\bi \l)}f(x).\\
}

\begin{Thm} $u=\sum S(\b_k)\a_k = m^{op}(1\ox S) R$ is given by
\Eq{u=e^{2\pi \bi (\l^2+\frac{Q^2}{4})}K^\frac{Q}{b}.}
\end{Thm}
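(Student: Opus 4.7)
The plan is to substitute the integral form \eqref{Rbasis1}--\eqref{Rbasis2} of $R$ into $u = m^{op}(1\ox S)R$ and evaluate directly. Using the extended antipode $S(\bf^{\bi b\inv t}) = e^{\pi Q t}\bf^{\bi b\inv t}$ from \eqref{SS} and $(1\ox S)(\bQ^{1/2}) = \bQ^{-1/2}$ (from $S(H) = -H$), one expands both copies of $\bQ^{\pm 1/2}$ as formal power series in $H\ox H$ and applies $m^{op}$. This yields a double sum in which the inner collapse $\sum_i c_i (-H)^i H^i = q^{-H^2/4}$ (with $c_i = (\pi\bi b^2/4)^i/i!$) produces a Gaussian sitting between $\bf^{\bi b\inv t}$ and $\be^{\bi b\inv t}$, while the outer sum, after moving the surrounding $(-H)^k$ and $H^k$ past $\bf^{\bi b\inv t}$ and $\be^{\bi b\inv t}$ via $H\bf = \bf(H-2)$ and $H\be = \be(H+2)$, similarly collapses to a second $q^{-H^2/4}$ on the right.

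Combining the two Gaussians via $q^{-H^2/4}\be^{\bi b\inv t} = \be^{\bi b\inv t} q^{-(H+2\bi b\inv t)^2/4}$ and expanding the square generates a factor $e^{\pi\bi t^2}$ that exactly cancels the $e^{-\pi\bi t^2}$ in the original integrand, leaving
\[u = \int_{\R+\bi 0} \frac{e^{\pi Q t}}{G_b(Q+\bi t)} \bf^{\bi b\inv t}\be^{\bi b\inv t}\, q^{-H^2/2}\, e^{\pi b t H}\, dt.\]
On $\cP_\l$, every factor to the right of the integral sign acts diagonally in the coordinate realization of Proposition \ref{canonicalsl2}, where $H$ becomes the multiplication operator $2\bi y/b$: $e^{\pi b t H}$ acts as multiplication by $e^{2\pi\bi ty}$, $q^{-H^2/2}$ as multiplication by $e^{2\pi\bi y^2}$, and $\bf^{\bi b\inv t}\be^{\bi b\inv t}$ is the multiplication operator computed in the lemma immediately preceding the statement. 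Thus $u$ acts on $f(y)$ as multiplication by a single scalar-valued integral in $t$.

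To evaluate this integral, I would use the reflection property \eqref{reflection} in the form $G_b(Q+\bi t)\inv = G_b(-\bi t)\, e^{\pi Q t + \pi\bi t^2}$ and then substitute $\t = -t$. The result is precisely the shape of the $3$-$2$ relation (Proposition \ref{32}) with $\a = 0$, $\b = \frac{Q}{2}+\bi y + \bi\l$, $\c = \frac{Q}{2}+\bi y - \bi\l$, and the integral evaluates to $G_b(\frac{Q}{2}+\bi y+\bi\l)G_b(\frac{Q}{2}+\bi y-\bi\l)\cdot e^{2\pi\bi(\frac{Q}{2}+\bi y)^2 + 2\pi\bi\l^2}$. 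The two $G_b$-factors cancel the denominator introduced by $\bf^{\bi b\inv t}\be^{\bi b\inv t}$ exactly; expanding $(\frac{Q}{2}+\bi y)^2 = \frac{Q^2}{4} + \bi Q y - y^2$ makes the $y^2$ pieces cancel the $e^{2\pi\bi y^2}$ already present, and the expression collapses to $e^{2\pi\bi(\l^2 + Q^2/4)}e^{-2\pi Q y} = e^{2\pi\bi(\l^2 + Q^2/4)} K^{Q/b}$ acting on $f(y)$, as claimed.

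The main obstacle is the first step: neither $\bQ^{\pm 1/2}$ nor $\be^{\bi b\inv t}\ox \bf^{\bi b\inv t}$ lies in the algebraic tensor product $\cU_q(\g)\ox \cU_q(\g)$, so the term-by-term application of $(1\ox S)$, the power-series expansion of $\bQ^{\pm 1/2}$, and the reorderings needed for $m^{op}$ are all formal at this stage; they are justified retroactively in Section \ref{sec:Cstar} via the multiplier Hopf algebra framework, where $R$ becomes a well-defined canonical element of the Drinfeld double and $u$ a bona fide multiplier of $\cU_q(\g)$.
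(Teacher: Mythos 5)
Your proposal is correct and follows essentially the same route as the paper's proof: term-by-term application of $m^{op}(1\ox S)$ to the power-series expansion of $\bQ^{\pm\half}$, collapsing the resulting Gaussians so that the $e^{\pi\bi t^2}$ cancels against the Fourier-transform measure, inserting the explicit multiplication-operator form of $\bf^{\bi b\inv t}\be^{\bi b\inv t}$ from the preceding Lemma, and reducing the remaining $t$-integral via the reflection property and the $3$-$2$ relation (Proposition \ref{32}) with $\a=0$. The bookkeeping of the exponentials matches the paper's computation, so nothing further is needed.
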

\begin{proof}
First note that $H\be=\be H+2\be$ implies
$$H^n \be = \be(H+2)^n$$
$$H^n \be^{\bi b\inv t}=\be^{\bi b\inv t}(H+2\bi b\inv t)^n.$$
Similarly
$$H^n \bf^{\bi b\inv t}=\bf^{\bi b\inv t}(H-2\bi b\inv t)^n.$$
Note that $H$ commute with $\bf^{\bi b\inv t}\be^{\bi b\inv t}$.

Hence using the ``continuous basis" \eqref{Rbasis1}-\eqref{Rbasis2}
\Eqn{
&(H^n\ox H^n) (e^{\bi b\inv t}\ox f^{\bi b\inv t}) (H^m\ox H^m)\\
&=H^n e^{\bi b\inv t}H^m \ox H^n f^{\bi b\inv t}H^m,\\
m^{op}(1\ox S)&=S(H^n \bf^{\bi b\inv t}H^m)H^n \be^{\bi b\inv t}H^m\\
&=(-1)^m(-1)^n H^m e^{\pi  Qt}\bf^{\bi b\inv t}H^nH^n \be^{\bi b\inv t}H^m\\
&=(-1)^m(-1)^n H^m e^{\pi  Qt}\bf^{\bi b\inv t} \be^{\bi b\inv t}(H+2\bi b\inv t)^{2n}H^m\\
&=e^{\pi Qt} \bf^{\bi b\inv t} \be^{\bi b\inv t}(-H^2)^m(-H^2-4\bi b\inv t H+4b^{-2}t^2)^n.
}
Hence
\Eqn{
m^{op}(1\ox S)R&=\left(\int_{\R+\bi 0} \frac{e^{-\pi \bi t^2+\pi Qt }}{G_b(Q+\bi t)} \bf^{\bi b\inv t} \be^{\bi b\inv t} K^{-\bi b\inv t}e^{\pi \bi t^2}dt\right) q^{-\frac{H^2}{2}}\\
&=\left(\int_{\R+\bi 0} \frac{e^{\pi Qt}}{G_b(Q+\bi t)} \bf^{\bi b\inv t} \be^{\bi b\inv t} K^{-\bi b\inv t}dt \right)q^{-\frac{H^2}{2}},
}
and the action on $f(x)$ is given by ($K=e^{-2\pi bx}=q^{2\bi b\inv x}$, so $H=2\bi b\inv x$):

\Eqn{
u=&\int_{\R+\bi 0}e^{-(\pi \bi b^2)(2\bi b\inv x)^2/2}\cdot e^{-2\pi b x(-\bi b\inv t)}e^{\pi \bi t(2x+t)}\cdot\\
&\frac{G_b(\frac{Q}{2}+\bi x+\bi \l+\bi t)}{G_b(\frac{Q}{2}+\bi x+\bi \l)}\frac{G_b(\frac{Q}{2}+\bi x-\bi \l+\bi t)}{G_b(\frac{Q}{2}+\bi x-\bi \l)}\frac{e^{\pi Qt}}{G_b(Q+\bi t)}dt\\
=&\frac{\int_{\R+\bi 0}e^{2\pi \bi (x+t)^2+2\pi Qt}G_b(\frac{Q}{2}+\bi x+\bi \l+\bi t)G_b(\frac{Q}{2}+\bi x-\bi \l+\bi t)G_b(-\bi t)dt}{G_b(\frac{Q}{2}+\bi x+\bi \l)G_b(\frac{Q}{2}+\bi x-\bi \l)}\\
=&e^{2\pi \bi (\l^2+\frac{Q^2}{4})}e^{-2\pi Qx}\cdot\\
&\frac{\int_{\R+\bi 0}e^{-2\pi \bi (\frac{Q}{2}+\bi x+\bi \l+it)(\frac{Q}{2}+\bi x-\bi \l+\bi t)}G_b(\frac{Q}{2}+\bi x+\bi \l+\bi t)G_b(\frac{Q}{2}+\bi x-\bi \l+\bi t)G_b(-\bi t)dt}{G_b(\frac{Q}{2}+\bi x+\bi \l)G_b(\frac{Q}{2}+\bi x-\bi \l)}\\
&=e^{2\pi \bi (\l^2+\frac{Q^2}{4})}K^{\frac{Q}{b}},
}
where in the last line we used the 3-2 relations from Proposition \ref{32}.
\end{proof}

\begin{Rem}
Letting $l=-\frac{Q}{2}+\bi \l$, one should compare this expression 
\Eq{u=q^{-2\frac{l}{b}(\frac{l}{b}+\frac{Q}{b})}K^\frac{Q}{b},} with 
with the expression from the compact case \cite{KR} on the $2j+1$ dimensional module $V_j$:
\Eq{u=q^{-2j(j+1)}K.}
\end{Rem}

Now one can check that the following is satisfied: $S^2(a)=uau\inv$:
\Eqn{
S^2(\be^{\bi b\inv t})&=e^{-2\pi Qt}\be^{\bi b\inv t}=K^{\frac{Q}{b}}\be^{\bi b\inv t}K^{-\frac{Q}{b}}=u\be^{\bi b\inv t}u\inv,\\
S^2(\bf^{\bi b\inv t})&=e^{2\pi Qt}\bf^{\bi b\inv t}=K^{\frac{Q}{b}}\bf^{\bi b\inv t}K^{-\frac{Q}{b}}=u\bf^{\bi b\inv t}u\inv.
}
\begin{Def} The ribbon element $v$ is defined to be the constant operator acting on $P_\l$ as multiplication by
\Eq{v=e^{2\pi \bi (\l^2+\frac{Q^2}{4})},}
such that $u=vK^{\frac{Q}{b}}$.
\end{Def}
\subsection{Branching rules for $\cU_{q\til{q}}(\sl(2,\R))\subset \cU_{q\til{q}}(\g_\R)$}\label{sec:QWeyl:branching}
In the work of \cite{KR} and \cite{LS}, the quantum Weyl element is defined by decomposing $\cU_q(\g)$ into irreducible $\cU_q(\sl(2))$ submodules, which exists because the algebra involved are semisimple. In the current setting of positive representations, which is infinite dimensional, it is not at all clear whether the same decomposition is possible. It turns out that the branching rules is particularly simple, and remarkably it resembles both the decomposition of the tensor product representation $P_\a\ox P_\b$ (cf. \cite{PT2}) and the Peter-Weyl type decomposition of $L^2(SL_q^+(2,\R))$ (cf. \cite{Ip1}) with exactly the same Plancherel measure $|S_b(Q+2\a)|^2d\a$:

Let $q_i=e^{\pi \bi b_i^2}$ and $Q=b_i+b_i\inv$. 
\begin{Thm} \label{branchingrules} Fixed any positive representation $\cP_\l\simeq L^2(\R^N)$ of $\cU_{q\til{q}}(\g_\R)$, where $N=l(w_0)$. As a representation of $\cU_{q_i\til{q_i}}(\sl(2,\R))\subset \cU_{q\til{q}}(\g_\R)$ corresponding to the root $\a_i$, 
\Eq{\cP_{\l}\simeq L^2(\R^{N-2})\ox \int_{\R_+} P_\c d\mu(\c)}
is a unitary equivalence, where $P_\c$ is the positive representation of $\cU_{q_i\til{q_i}}(\sl(2,\R))$ with parameter $\c$, and the Plancherel measure is given by \Eq{d\mu(\c)=|S_{b_i}(Q_i+2\c)|^2d\c,} where $S_b(x)=G_b(x)e^{\frac{\pi \bi }{2}x(Q-x)}$.
\end{Thm}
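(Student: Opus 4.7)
The strategy is to bring the $\sl(2)_i$-action into a standard tensor-product form by exploiting the freedom to choose the reduced expression of $w_0$, and then to invoke the known spectral decomposition of $P_\a\otimes P_\b$ from \cite{PT2}.

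First, I would fix a convenient reduced expression. Using that different reduced expressions of $w_0$ yield unitarily equivalent positive representations, choose $w_0=w's_is_i'$-style decomposition, or more precisely, pick $w_0$ so that the letter $s_i$ appears adjacently to an $s_j$ with $j$ either equal to $i$'s position in a nice way; concretely, by the braid moves one can arrange that the action of $\be_i$ depends only on the last coordinate $v_N=u_i^1$, namely $\be_i=[u_i^1]e(-p_i^1)$, while $\bf_i$ can be grouped by separating the terms containing $p_i^1$ (which will act in the ``local'' $\sl(2)_i$ direction) from those not containing it (which will act only on transverse variables).

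Second, I would introduce the change of variables. Singling out the two coordinates $u_i^1$ and one other (the next occurrence of $\a_i$ if it exists, or the highest-weight direction of the transverse Cartan contribution), I would conjugate by an explicit unitary $U$ built from multiplication operators, quadratic phase factors $e^{\pi\bi (\cdots)^2}$, $g_{b_i}$-dilogarithms, and Fourier transforms, chosen so that after conjugation the triple $(\be_i,\bf_i,K_i)$ acts as the coproduct $(\Delta\otimes\mathrm{id}^{\otimes N-2})$ applied to the generators of $\cU_{q_i\til{q_i}}(\sl(2,\R))$ in the tensor product $P_{\a(\vec x')}\otimes P_{\b(\vec x')}$, where $\a,\b$ are affine functions of the remaining $N-2$ coordinates $\vec x'$. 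Here the rank-one canonical form of Proposition \ref{canonicalsl2} is matched term by term with the simplified action; the generalized pentagon relations and the $q$-binomial theorem of Section \ref{sec:prelim:qdlog} are the main tools for verifying these identifications.

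Third, I would apply the Clebsch--Gordan decomposition. For $\sl(2)_i$ one has
\Eqn{
P_\a\otimes P_\b\ \simeq\ \int_{\R_+} P_\c\, d\mu(\c),\qquad d\mu(\c)=|S_{b_i}(Q_i+2\c)|^2 d\c,
}
via an explicit intertwiner whose kernel is a product of $G_{b_i}$-functions (Ponsot--Teschner); this intertwiner is precisely the one proved in \cite{PT2} using the tau-beta theorem (Lemma \ref{tau}). Because the parameters $\a,\b$ depend only on $\vec x'$, this intertwiner acts fibrewise, and combining it with the identity on the $L^2(\R^{N-2})$-factor produces the claimed unitary equivalence
\Eqn{
\cP_\l\ \simeq\ L^2(\R^{N-2})\otimes\int_{\R_+} P_\c\, d\mu(\c).
}

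The main obstacle I anticipate is Step 2: identifying the precise unitary $U$ that puts the $\sl(2)_i$-action into honest tensor-product form. The coupling terms in $\bf_i$ and in the Cartan exponent $K_i$ mix the local direction with transverse variables through $q$-exponential tails, and disentangling them requires careful use of the pentagon identity together with the change-of-variables identities for $g_{b_i}$ of Section \ref{sec:qdlog}. Once this step is completed, the determination of the Plancherel measure $|S_{b_i}(Q_i+2\c)|^2 d\c$ is automatic, as it is inherited from the Ponsot--Teschner decomposition and matches the Peter--Weyl measure of $L^2(SL_q^+(2,\R))$ from \cite{Ip1}.
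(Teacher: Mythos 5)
Your strategy is genuinely different from the paper's, and as written it has a gap at exactly the point you flag as ``the main obstacle.'' The paper does \emph{not} put the restricted $\sl(2)_i$-action into two-fold tensor-product form and then invoke Ponsot--Teschner. Instead it diagonalizes the (rescaled) Casimir $\bc_i=\bf_i\be_i-(q_iK_i+q_i\inv K_i\inv)$: with $w_0=w's_i$ the operator $\be_i$ is a single term, $\bc_i$ becomes a sum of terms that $q_i^2$-commute successively, and these are collapsed one at a time by $g_{b_i}$-conjugations and affine changes of variables until $\bc_i\simeq e^{2\pi b_iu}+e^{-2\pi b_iu}+e^{2\pi b_ip}$. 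The spectral decomposition of this single operator, with measure $|S_{b_i}(Q_i+2\c)|^2d\c$, is quoted from \cite{Ip1, Ka}; the Plancherel measure is inherited from there, not from the tensor-product decomposition of \cite{PT2} (the paper only \emph{remarks} on the coincidence of the two measures). The fiberwise identification of the representation is then obtained by reversing the transformations. The advantage of working with the Casimir is that it is one self-adjoint operator commuting with the whole $\sl(2)_i$, so ``collapsing terms'' is a one-operator problem solvable by the pentagon relations.

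The concrete difficulty with your Step 2 is this: in the canonical form, $\bf_i$ has one term for each occurrence of $s_i$ in the reduced word $w_0$, and this number is in general much larger than $2$ (already for $A_2$ with $w_0=s_2s_1s_2$ the generator $\bf_2$ has two terms while $\be_2$ has one, and in higher rank the count grows). A two-fold coproduct $\D(\bf)$ on $P_{\a}\ox P_{\b}$ has exactly two terms, so your claimed normal form requires a single unitary that simultaneously collapses all of $\be_i$, $\bf_i$, $K_i$ into that shape --- and you give no construction and no argument that the parameters $\a,\b$ come out as (real, positive) affine functions of the transverse variables, which is needed for \cite{PT2} to apply fiberwise. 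A priori the restriction looks like an $n_i$-fold iterated coproduct with complex/degenerate parameters, and the Clebsch--Gordan theory of such products is not covered by the cited results. So while the statement you want in Step 2 is \emph{a posteriori} consistent with the theorem (since $\int_{\R_+}P_\c\,d\mu(\c)\simeq P_\a\ox P_\b$ by \cite{PT2}), using it as an intermediate step is circular unless you supply the explicit disentangling unitary; the Casimir route of the paper is precisely the device that avoids having to do so.
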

\begin{proof} Using the same techniques as in \cite{Ip1}, it suffices to diagonalize the Casimir element.

By taking $w_0=w's_i$ so that the simple reflection for root $i$ appears on the right, the action of $\be_i$ is the standard action (using notation from Section \ref{sec:prelim:pos})
$$\be_i=[u_i^1]e(-p_i^1),$$
while the action of $\bf_i$ and $K_i$ is given by \eqref{FF} and \eqref{KK} respectively.

Note that $\be_i$ commute with the terms of $\bf_i$ for $k>1$. Then the rescaled Casimir element $\bc_i$ for this $\cU_{q_i\til{q_i}}(\sl(2,\R))$ representation
\Eq{\bc_i:=\left(\frac{\bi }{q_i-q_i\inv}\right)^{-2}C_i = \bf_i\be_i-(q_iK_i+q_i\inv K_i\inv)} is given by
\Eqn{
&\sum_{k=1}^n\left[-\sum_{j=1}^{v(i,k)-1} a_{i,r(j)}v_j-u_i^k-2\l_i\right]e(p_{i}^{k})[u_i^1]e(-p_i^1)-(q_iK+q_i\inv K\inv)\\
=&\sum_{k=2}^n\left[-\sum_{j=1}^{v(i,k)-1} a_{i,r(j)}v_j-u_i^k-2\l_i\right][u_i^1]e(p_{i}^{k}-p_i^1)+2\cosh\left(\pi\bb\cdot (\sum_{j=1}^{v(i,1)-1}a_{i,r(j)}v_j+2\l_i)\right).
}
Here we used the notation $(\bb\cdot-)$ so that variables corresponding to short (resp. long) root get multiplied by $b_s$ (resp. $b_l$) (cf. Definition \ref{usul}).  Applying the transformation by multiplication by $g_{b_i}^*(2u_i^1)$ will eliminate the $[u_i^1]$ factor:
\Eqn{
\simeq&\sum_{k=2}^n\left[-\sum_{j=1}^{v(i,k)-1} a_{i,r(j)}v_j-u_i^k-2\l_i\right]e(p_{i}^{k}-p_i^1)+2\cosh\left(\pi \bb\cdot(\sum_{j=1}^{v(i,1)-1}a_{i,r(j)}v_j+2\l_i)\right).}

Now we know from previously that the terms from $k=2$ to $k=n$ $q_i^2$-commute successively. Hence there exists transformation by certain $g_{b_i}(x)$ where the argument is given by the differences of the factors, that the above is unitary equivalent to first term:
\Eqn{
\simeq2\cosh\left(\pi \bb\cdot(\sum_{j=1}^{v(i,1)-1}a_{i,r(j)}v_j+2\l_i)\right)+e^{\pi \bb\cdot(-\sum_{j=1}^{v(i,n)-1}a_{i,r(j)}v_j-u_i^n-2\l_i+2p_i^n-2p_i^1)}.}
Now apply the transformation $u_i^1\mapsto u_i^1-u_i^n$ to get rid of $p_i^1$.
Then apply $$p_i^n\mapsto p_i^n+\l_i-\half\left(\sum_{j=1}^{v(i,n)-1}a_{i,r(j)}v_j-u_i^n\right),$$
so that the last term becomes simply $e^{2\pi b_ip_i^n}$. Finally apply 
$$u_i^n\mapsto u_i^n-\l_i-\half\sum_{j=1, v_j\neq u_i^n}^{v(i,1)-1} a_{i,r(j)} v_j,$$ 
and we arrive at 
$$\bc_i\simeq e^{2\pi b_iu_i^n}+e^{-2\pi b_iu_i^n}+e^{2\pi b_ip_i^n},$$
and we know from \cite{Ip1, Ka} that this is unitary equivalent to
$$\bc_i\simeq\int_{\R^+} (e^{2\pi b_i\c}+e^{-2\pi b_i\c}) d\mu(\c),$$
with the measure $d\mu(\c)=|S_{b_i}(Q_i+2\c)|^2d\c$.

Finally by reversing the transformations above, skipping the variables $u_i^n$, we obtain an explicit expression of the action $\be_i, \bf_i$ involving only the last variable in $$L^2(\R^{N-2})\ox \int_{\R_+} P_\c d\mu(\c).$$
\end{proof}
\subsection{Unitary action of the Weyl element $w_i$}\label{sec:QWeyl:Weyl}
Following the compact case in \cite{KR}, we adjoin an element $w$ to $\cU_{q\til{q}}(\sl(2,\R))$ such that it satisfies the following:
\Eq{
w \be w\inv &= \bf,\\
w \bf w\inv &= \be,\\
w K w\inv &= K\inv,
}
with the Hopf algebra structure
\Eq{
\D w&=R\inv (w\ox w),\\
S(w)&=wK^{\frac{Q}{b}},\\
\e(w)&=1,
}
so that in addition it satisfies
\Eq{w^2=v=uK^{-\frac{Q}{b}},}
which implies
\Eq{S(w)w=u.}

On the positive representations considered in Proposition \ref{canonicalsl2}, we define the action of $w$ on $P_\l=L^2(\R)$ as a unitary operator
\Eq{w\cdot f(x)=e^{\pi \bi (\l^2+\frac{Q^2}{4})}f(-x),\label{wact}}
so that all the above properties are satisfied.

Now in the general case, consider the positive representation $\cP_\l$ of $\cU_{q\til{q}}(\g_\R)$. For each simple roots $\a_i$, using the branching rule of $\cU_{q\til{q}}(\sl(2,\R))$ from Theorem \ref{branchingrules}, we define the action of $w_i$ on $\cP_\l$ as
\Eq{w_i:=I_{N-2}\ox \int_{\R_+} w_i^\a d\mu(\a),\label{ww}}
where $w_i^\a$ acts as \eqref{wact} on $P_\a$. It is clear that $w_i$ is a unitary operator since the branching rule for $\cU_{q\til{q}}(\sl(2,\R))$ is a unitary equivalence.

Now we can follow the approach in [KR] and calculate the action of $w_i\be_jw_i\inv$ and $w_i\bf_jw_i\inv$, while we also have \Eq{w_iK_jw_i\inv=K_jK_i^{-a_{ij}}.} We will do the calculations mainly for $\be_i$, while those for $\bf_i$ is similar.
Let
\Eqn{
\over[\be]_i&:=q_i^{\half}K_i^{-\half}\be_i,\\
\over[\bf]_i&:=q_i^{\half}K_i^{\half}\bf_i,
}
so that the $R$ matrix can be expressed as
\Eq{R_i=g_b(\over[\be]_i\ox \over[\bf]_i)q^{\frac{H_i\ox H_i}{2}}.}
Note that $\over[\be]_i$ and $\over[\bf]_i$ are still positive essentially self-adjoint and satisfy the transcendental relations.

For any Hopf algebra $A$, one can define the adjoint action of $A$ on itself by
\Eq{a\circ b=\sum_i a^i b S(a_i),} where $\D(a)=\sum_i a^i\ox a_i$. Then the action $w_i\circ \over[\be]_j$ can be calculated exactly as in [KR], taking into account the new antipode, and we still obtain
\Eq{w_i\circ \over[\be]_j=w_i \over[\be]_j K_i^{\frac{1}{2}a_{ij}}w_i\inv.}
On the other hand, $V_{ij}=\{(\over[\be]_i)^n\circ \over[\be]_j\}_{n=0}^{-a_{ij}}$ is an irreducible $U_q(\sl(2))$-module with highest weight $-a_{ij}$. Since $w_i$ flips the action of $E_i$ and $F_i$ by definition, the adjoint action maps the lowest weight vector to highest weight vector. In particular, we have
\Eq{
w_i\circ \over[\be]_j&=c_{ij} \over[\be]_i^{-a_{ij}} \circ \over[\be]_j
}
for some constant $c_{ij}$. Note that this equation also holds for the modular double counterpart $\til{\be_j}$. Hence the constant $c_{ij}$ is uniquely determined by the fact that $w_i\be_j w_i\inv$ is positive and satisfy the transcendental relation.
Now it is easy to calculate that
\Eq{\over[\be]_i^{-a_{ij}}\circ \over[\be]_j = (-1)^{a_{ij}}\be_{ij}K_i^{\frac{a_{ij}}{2}}K_j^{-\half},}
where $\be_{ij}$ is defined in Proposition \ref{eij} and \ref{eij2}, and
\Eqn{
&w_i \over[\be]_j K_i^{\frac{a_{ij}}{2}}w_i\inv\\
&=w_i q_j^{\half}K_j^{-\half}\be_jK_i^{\frac{a_{ij}}{2}}w_i\inv\\
&=w_i \be_j w_i\inv q_j^{-\half}K_j^{-\half}.
}

Hence combining we have
\Eqn{
w_i \be_j w_i\inv =c_{ij}'\be_{ij}K_i^{\frac{a_{ij}}{2}.}
}
The constant can now be easily determined by positivity to be $c_{ij}'=q_i^{-\frac{a_{ij}^2}{4}}$.

Finally we define $w_i':=w_iq_i^{\frac{H_i^2}{4}}$ and $T_i(a)=w_i'a(w_i')\inv$.
We have
\begin{Thm}\label{LusThm} The action $T_i$ resembles the Lusztig's isomorphism \cite{Lu1}, while taking positivity into account. In particular Proposition \ref{LuCox} is still satisfied. We have
\Eq{
T_i(\be_i)=&q_i \bf_iK_i\inv=q_i\inv K_i\inv \bf_i,\\
T_i(\bf_i)=&q_i\inv K_i \be_i=q_i\be_i K_i,\\
T_i(\be_j)=&\be_{ij},\mbox{ for $i,j$ adjacent,}\\
T_i(\bf_j)=&\bf_{ij},\mbox{ for $i,j$ adjacent,}\\
T_i(K_j)=&K_jK_i^{-a_{ij}}.
}
\end{Thm}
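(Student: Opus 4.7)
The plan is to assemble the theorem from the calculations developed in the paragraphs immediately preceding it, verifying the formulas case by case on generators. The proof splits naturally into three parts: the rank-$1$ formulas involving $\be_i,\bf_i,K_i$, the Cartan formula for $K_j$, and the adjacent-root formulas for $\be_j,\bf_j$. The Coxeter relation (Proposition \ref{LuCox}) then follows because the resulting $T_i$ agrees with Lusztig's classical formulas up to scalars, and the scalars are pinned down uniquely by positivity and the transcendental relations.

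For the rank-$1$ case, I first compute $q_i^{H_i^2/4}\be_iq_i^{-H_i^2/4}$ using $[H_i,\be_i]=2\be_i$, which gives $q_i\be_iK_i$. Combining with the defining relations $w_i\be_iw_i\inv=\bf_i$ and $w_iK_iw_i\inv=K_i\inv$ on the embedded $\cU_{q_i\til{q_i}}(\sl(2,\R))$ yields $T_i(\be_i)=q_i\bf_iK_i\inv$, and the equivalent form $q_i\inv K_i\inv\bf_i$ follows from $K_i\bf_iK_i\inv=q_i^{-2}\bf_i$. The formula for $T_i(\bf_i)$ is entirely symmetric, using $w_i\bf_iw_i\inv=\be_i$ and $[H_i,\bf_i]=-2\bf_i$.

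For the Cartan formula, since $H_i^2/4$ commutes with $K_j$ (both lying in the Cartan), $T_i(K_j)=w_iK_jw_i\inv$. Invoking the branching rule of Theorem \ref{branchingrules}, on each $\sl(2,\R)$-fibre $K_j$ acts as a spectator operator on $L^2(\R^{N-2})$ multiplied by a definite power of the fibre's $K_i$, so the flip $w_iK_iw_i\inv=K_i\inv$ translates directly to $T_i(K_j)=K_jK_i^{-a_{ij}}$, consistent with the classical Weyl reflection $s_i(\a_j)=\a_j-a_{ij}\a_i$ under the correspondence $K_\mu\leftrightarrow\mu$.

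For the adjacent-root formulas, the identity $w_i\be_jw_i\inv=q_i^{-a_{ij}^2/4}\be_{ij}K_i^{a_{ij}/2}$ is already established in the paragraphs preceding the theorem via the adjoint-action argument on the finite-dimensional irreducible $\cU_{q_i\til{q_i}}(\sl(2,\R))$-submodule $\{(\over[\be]_i)^n\circ\over[\be]_j\}_{n=0}^{-a_{ij}}$, together with the positivity constraint on $c_{ij}'$. It remains to conjugate further by $q_i^{H_i^2/4}$: using $[H_i,\be_j]=a_{ij}\be_j$ one finds $q_i^{H_i^2/4}\be_jq_i^{-H_i^2/4}=\be_jq_i^{a_{ij}^2/4}K_i^{a_{ij}/2}$, so pushing this through $w_i$ picks up $w_iK_i^{a_{ij}/2}w_i\inv=K_i^{-a_{ij}/2}$, which cancels both the $K_i^{a_{ij}/2}$ factor and the scalar $q_i^{-a_{ij}^2/4}$, leaving $T_i(\be_j)=\be_{ij}$. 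The formula $T_i(\bf_j)=\bf_{ij}$ follows by applying the same argument on the $\bf$-side, using Proposition \ref{eij2} and the analogous adjoint-action module generated by $\bf_j$. The main obstacle throughout is the rigorous determination of $c_{ij}'$: the adjoint-action argument only fixes $w_i\be_jw_i\inv$ up to a scalar, and pinning down $c_{ij}'=q_i^{-a_{ij}^2/4}$ relies on simultaneously invoking positivity and essential self-adjointness from Propositions \ref{eij}--\ref{eij2} together with invariance under the transcendental exchange $b\leftrightarrow b\inv$; together these constraints are rigid enough to isolate the scalar.
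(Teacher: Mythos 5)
Your proposal is correct and follows essentially the same route as the paper: conjugate by $w_i'=w_iq_i^{H_i^2/4}$, push $q_i^{H_i^2/4}$ past the generators via the commutation relations $[H_i,\be_j]=a_{ij}\be_j$ to pick up $q_i^{a_{ij}^2/4}K_i^{a_{ij}/2}$, and cancel against the previously derived $w_i\be_jw_i\inv=q_i^{-a_{ij}^2/4}\be_{ij}K_i^{a_{ij}/2}$, with the Coxeter relations deduced from positivity rigidifying the scalars. Your added glosses (the branching-rule justification of $w_iK_jw_i\inv$ and the discussion of how $c_{ij}'$ is pinned down) only elaborate on steps the paper asserts in the preceding paragraphs.
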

\begin{proof}
\Eqn{
T_i(\be_i)&=w_iq_i^{\frac{H_i^2}{4}}\be_iq_i^{-\frac{H_i^2}{4}}w_i\inv\\
&=w_i\be_i q_i^{\frac{(H_i+2)^2}{4}}q_i^{-\frac{H_i^2}{4}}w_i\inv\\
&=w_i\be_i K_i q_i w_i\inv\\
&=q_i \bf_iK_i\inv=q_i\inv K_i\inv \bf_i,
}
\Eqn{
T_i(\be_j)&=w_iq_i^{\frac{H_i^2}{4}}\be_jq_i^{-\frac{H_i^2}{4}}w_i\inv\\
&=w_i\be_iq_i^{\frac{(H_i+a_{ij})^2}{4}}q_i^{-\frac{H_i^2}{4}}w_i\inv\\
&=q_i^{-\frac{a_{ij}^2}{4}}\be_{ij}K_i^{\frac{a_{ij}}{2}} K_i^{-\frac{a_{ij}}{2}}q_i^{\frac{a_{ij}^2}{4}}\\
&=\be_{ij},
}
and similarly for the calculations of $\bf$. The action $T_i$ only differs from Lusztig's isomorphism by certain scaling, hence Proposition \ref{LuCox} is still satisfied due to positivity that restricts the scaling.
\end{proof}
\begin{Cor}\label{Tpos} Under the positive representations $\cP_\l$, the operators $T_{i_1}...T_{i_k}\be_j$ are positive essentially self-adjoint, and satisfies the transcendental relations.
\end{Cor}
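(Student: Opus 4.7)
The plan is to exploit the fact, established in Theorem \ref{LusThm}, that each Lusztig automorphism $T_i$ is implemented on $\cP_\l$ by inner conjugation with the operator $w_i' = w_i q_i^{H_i^2/4}$. Since conjugation by a unitary operator preserves both positivity and essential self-adjointness, and commutes with the fractional-power functional calculus, the corollary will follow by a straightforward induction on $k$ once $w_i'$ is shown to be unitary.

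First I would verify that $w_i'$ is a unitary operator on $\cP_\l$. The element $w_i$ is unitary by construction: it is defined in \eqref{ww} as a direct integral of the operators \eqref{wact}, each of which is a product of a phase and the parity map $f(x)\mapsto f(-x)$, and the branching rule of Theorem \ref{branchingrules} is itself a unitary equivalence. The factor $q_i^{H_i^2/4}=\exp\!\bigl(\tfrac{\pi\bi b_i^2}{4}H_i^2\bigr)$ is unitary because $K_i$ is positive essentially self-adjoint, hence $H_i$ is self-adjoint. Thus $w_i'$ is unitary as a product of two unitaries.

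Next I would induct on $k$. For $k=0$ the statement is just the defining property of the positive representation $\cP_\l$ recalled in Section \ref{sec:prelim:pos}. For the induction step, assume $X:=T_{i_2}\cdots T_{i_k}\be_j$ is positive essentially self-adjoint and satisfies the transcendental relation. Then $T_{i_1}(X)=w_{i_1}'X(w_{i_1}')^{-1}$ is again positive essentially self-adjoint since unitary conjugation preserves both properties. For the transcendental relation, Borel functional calculus gives
\Eq{T_{i_1}(X)^{1/b_j^2}=w_{i_1}'\,X^{1/b_j^2}\,(w_{i_1}')^{-1}=w_{i_1}'\,\til{X}\,(w_{i_1}')^{-1},\nonumber}
and it remains to identify the right-hand side with the tilde of $T_{i_1}(X)$ computed in the modular-dual picture.

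The main step, and arguably the only nontrivial one, is this identification: that $w_{i_1}'$ is itself \emph{self-dual} under the tilde operation. For $w_i$ this is transparent from the defining action \eqref{wact}: the phase $e^{\pi\bi(\lambda^2+Q^2/4)}$ depends on $b$ only through the combination $Q=b+b\inv$, and the parity map $f(x)\mapsto f(-x)$ is independent of $b$; moreover the branching rule of Theorem \ref{branchingrules} is an intertwiner for the full modular double $\cU_{q_i\til{q_i}}(\sl(2,\R))$, so the same $w_i$ simultaneously implements the Weyl reflection for the Langland-dual side. For the factor $q_i^{H_i^2/4}$, the transcendental relation $\til{K_i}=K_i^{1/b_i^2}$ forces the identification $\til{H_i}=b_i^2 H_i$, and then $\til{q_i}^{\til{H_i}^2/4}=\exp\!\bigl(\tfrac{\pi\bi}{b_i^2}\cdot\tfrac{b_i^4 H_i^2}{4}\bigr)=q_i^{H_i^2/4}$. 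Once this self-duality is in hand, conjugation of $\til{X}$ by $w_{i_1}'$ produces exactly the operator obtained by replacing $b$ by $b\inv$ throughout in $T_{i_1}(X)$ (respectively, its Langland-dual analogue in the non-simply-laced case), completing the induction. Keeping bookkeeping of the short/long root scaling $b_s,b_l$ in the non-simply-laced setting is the one point requiring care, but the argument is structurally identical.
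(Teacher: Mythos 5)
Your proposal is correct and follows exactly the route the paper intends: the corollary is stated without an explicit proof precisely because, once Theorem \ref{LusThm} realizes each $T_i$ as conjugation by the unitary $w_i'=w_iq_i^{H_i^2/4}$, positivity and essential self-adjointness are preserved automatically, and the transcendental relations follow from functional calculus together with the self-duality of $w_i'$ (the phase in \eqref{wact} depending only on $Q=b+b\inv$, the $b$-independence of the parity map, and $\til{q_i}^{\til{H_i}^2/4}=q_i^{H_i^2/4}$). Your write-up is a faithful and correctly detailed expansion of that one-line argument, including the appropriate caveat about the short/long root bookkeeping in the non-simply-laced case.
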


\section{Universal $R$ operator}\label{sec:R}
We are now in the position to define the universal $R$ operator in the flavor of Section \ref{sec:prelim:Rmatrix} and Section \ref{sec:prelim:Rsl2}, generalizing the respective formula. 

\begin{Thm}\label{mainR} Let $w_0=s_{i_1}s_{i_2}...s_{i_N}$ be a reduced expression of the longest element of the Weyl group. Then the universal $R$-operator for the positive representations of $\cU_{q\til{q}}(\g_\R)$ acting on $\cP_{\l_1}\ox \cP_{\l_2}\simeq L^2(\R^N)\ox L^2(\R^N)$ is given by
\Eq{R=&\prod_{ij}q_i^{\half(A\inv)_{ij}H_i\ox H_j}\prod_{k=1}^{N} g_b(\be_{\a_k}\ox \bf_{\a_k}) \prod_{ij}q_i^{\half(A\inv)_{ij}H_i\ox H_j},}
where $\be_{\a_k}=T_{i_1} T_{i_2}... T_{i_{k-1}} \be_{i_k}$, similarly for $\bf_{\a_k}$. The product is such that the term $k=1$ appears on the rightmost side.
It is clear that $R$ is a unitary operator.
\end{Thm}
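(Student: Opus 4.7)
The plan is to verify that the operator $R$ defined in the statement satisfies the braiding relation \eqref{br} and the quasi-triangularity \eqref{qt1}-\eqref{qt2}, and is unitary. Unitarity comes essentially for free: by Corollary \ref{Tpos}, each $\be_{\a_k}$ and $\bf_{\a_k}$ is positive essentially self-adjoint on $\cP_{\l_1}$ and $\cP_{\l_2}$ respectively, so $\be_{\a_k}\ox \bf_{\a_k}$ is a positive essentially self-adjoint operator on $\cP_{\l_1}\ox \cP_{\l_2}$, and therefore $g_b(\be_{\a_k}\ox\bf_{\a_k})$ is unitary by Lemma \ref{unitary}. The Cartan factors $q_i^{\half(A\inv)_{ij}H_i\ox H_j}$ are unitary since $|q_i|=1$ and $H_i$ is self-adjoint. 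The whole $R$ is then a product of unitary operators.

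For the braiding relation $\D'(X)R=R\D(X)$, my strategy is to check it on generators $X=K_j,\be_j,\bf_j$ separately. The case $X=K_j$ reduces immediately to checking that the Cartan factor commutes with $\D(K_j)=K_j\ox K_j$, and that the conjugation $(K_j\ox K_j)(\be_{\a_k}\ox \bf_{\a_k})(K_j\ox K_j)\inv=q_j^{(\a_k,\a_j)}\be_{\a_k}\ox q_j^{-(\a_k,\a_j)}\bf_{\a_k}=\be_{\a_k}\ox \bf_{\a_k}$ is trivial inside $g_b$. For $X=\be_j$, the idea (following the KR/LS approach in the compact case) is to use $\D(\be_j)=K_j^{-\half}\ox \be_j+\be_j\ox K_j^{\half}$, pass the Cartan factor conjugation through using the inverse Cartan matrix identities, and then push the resulting expression iteratively through the chain of $g_b$ factors using the generalized pentagon relation (Proposition \ref{genpenta}). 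The essential input is that the positive roots ordered by the reduced expression $w_0$ yield $\be_{\a_k}$'s with prescribed $q^2$-commutation structure between successive levels, so that after each step the extra $q$-commutator produced by the pentagon relation is exactly absorbed as the next term in the chain, yielding a telescoping calculation that terminates cleanly. The case $X=\bf_j$ is analogous with the roles of the two factors reversed.

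For the quasi-triangularity, I would apply $(\D\ox id)$ to each factor of $R$. The Cartan factor $q_i^{\half(A\inv)_{ij}H_i\ox H_j}$ behaves correctly because of the Hopf structure $\D(H_i)=H_i\ox 1+1\ox H_i$ together with the symmetry of $A\inv$. For the $g_b(\be_{\a_k}\ox \bf_{\a_k})$ factors, the coproduct of $\be_{\a_k}$ (which can be computed from $\D(\be_{i_k})$ and iterated application of Lusztig's isomorphism, Theorem \ref{LusThm}) splits as a sum of two terms of the form $K^{-\half}\ox \be_{\a_k}+\be_{\a_k}\ox K^{\half}$ which $q^2$-commute in the appropriate factors. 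The exponential relation $g_b(U)g_b(V)=g_b(U+V)$ from Lemma \ref{qsum} then splits each $g_b$ factor into two, and a reordering argument (again using the generalized pentagon relation Proposition \ref{genpenta} to interchange neighbouring $g_b$'s whose arguments have non-trivial $q$-commutator) recombines the pieces into $R_{13}R_{23}$. The relation $(id\ox \D)(R)=R_{13}R_{12}$ is symmetric.

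The main obstacle is the combinatorial bookkeeping of the $q$-commutation relations among the non-simple root vectors $\be_{\a_k}\ox \bf_{\a_k}$ for different $k$'s along the reduced expression. These are exactly the relations encoded in the Levendorski\u{\i}-Soibelman-type ordering, and verifying them in the positive setting requires reducing to the rank 2 case through the Coxeter relations of Proposition \ref{LuCox}. In rank 2, the needed $q$-commutation identities can be checked explicitly from the formulas of Proposition \ref{typeA2} (simply-laced), combined with the generalized pentagon and exponential relations of Propositions \ref{genpenta} and \ref{genexp}. The non-simply-laced case introduces additional commutators of the form $d,d'$ from Proposition \ref{UVcd} that must be absorbed through the more elaborate relations of Proposition \ref{genexp}, which is the reason the paper defers a complete treatment outside the simply-laced case.
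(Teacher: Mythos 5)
Your proposal follows essentially the same route as the paper: unitarity from Corollary \ref{Tpos} and Lemma \ref{unitary}, braiding verified on generators by pushing $\D(\be_j)$ through the chain of $g_b$ factors via the generalized pentagon relation \eqref{genpentagon}, quasi-triangularity via the exponential relation \eqref{qsum0} plus reordering, and reduction of higher rank to the rank-2 computation (which the paper implements concretely through the Weyl-element factorization \eqref{wR}). The only ingredient you do not mention is the paper's alternative derivation in Section \ref{sec:Cstar}, where $R$ is obtained as the canonical element of the Drinfeld double of the multiplier Hopf algebra $\bU\bb$, making braiding and quasi-triangularity automatic; but your explicit verification matches the paper's Section \ref{sec:R} argument.
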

\begin{Rem}\label{bar}
By Corollary \ref{Tpos}, the generator $\be_{\a_k}\ox \bf_{\a_k}$ are positive, hence the expression is well-defined. By commuting the last factor, $R$ can also be written as
\Eq{R=&\prod_{ij}q_i^{\half(A\inv)_{ij}H_i\ox H_j}\prod_{k=1}^{N} g_b(\over[\be]_{\a_k}\ox \over[\bf]_{\a_k}) ,}
where $\over[\be]_{\a_k}=q_{i_k}^{-\half}K_{\a_k}^{\half}\be_{\a_k}$ and $\over[\bf]_{\a_k}=q_{i_k}^{-\half}K_{\a_k}^{-\half}\bf_{\a_k}$.
\end{Rem}

By general theory developed in \cite{KR} and \cite{LS}, the $R$ operator can be written in terms of the root components as follows. Let $W_0=w_{i_1}...w_{i_N}$. Then
\Eqn{R\inv (W_0\ox W_0)&=\D(W_0)\\
=&\D(w_{i_1})...\D(w_{i_N})\\
=&R_{i_1}\inv (w_{i_1}\ox w_{i_1})...R_{i_N}\inv (w_{i_N}\ox w_{i_N}),
}
or
\Eq{R=(W_0\ox W_0)(w_{i_N}\ox w_{i_N})\inv R_{i_N}...(w_{i_1}\ox w_{i_1})\inv R_{i_1}.\label{wR}}
It turns out, not surprisingly, that it suffices to prove the braiding relations and quasi-triangularity relations in the case of rank=2.
It is known that the braiding relations and quasi-triangularity relations imply 
\Eq{(S\ox S)(R)=R,}
In rank=2 case, this means that the expression of $R$ corresponding to the Coxeter relation \eqref{coxeter} for the change of words of $w_0$ is the same. Therefore the definition given in Theorem \ref{mainR} does not depend on the choice of reduced expression, hence the expression of $R$ is uniquely defined.

In the next section, we will show that this $R$ operator arise as the canonical element of certain Drinfeld's double construction. Hence the braiding relation and quasi-triangularity will be automatic from the formal algebraic manipulation. However, it is still instructive to see explicitly how the functional equations of the quantum dilogarithm $g_b(x)$ play a role in the calculation of these properties.

\subsection{Braiding relations in simply-laced case}\label{sec:R:braiding}
Consider the case of type $A_2$, and choose $w_0=s_1s_2s_1$. The universal $R$-operator is given explicitly by
\Eq{R=\bQ^\half g_b(\be_2\ox \bf_2)g_b(\be_{12}\ox\bf_{12})g_b(\be_1\ox\bf_1)\bQ^\half ,}
where 
\Eq{\bQ=q^{\frac{2}{3}H_1\ox H_1+\frac{1}{3}H_1\ox H_2+\frac{1}{3}H_2\ox H_1+\frac{2}{3}H_2\ox H_2}.}
We will show that 
\Eq{\D'(\be_1)R=R\D(\be_1).} The other cases are similar.
\Eqn{
\D'(\be_1)\bQ^\half=&(\be_1\ox K_1^{-\half}+K_1^{\half}\ox \be_1)\bQ^\half\\
=&\bQ^\half(\be_1\ox K_1\inv+1\ox \be_1).
}
Next we have
$$(\be_1\ox K_1\inv+1\ox \be_1)g_b(\be_2\ox \bf_2)=g_b(\be_2\ox \bf_2)(\be_1\ox K_1\inv+\be_{12}\ox q^{\half}K_1\inv \bf_2+1\ox \be_1),$$
where we used the generalized pentagon relation \eqref{genpentagon} with
$$\frac{[\be_2\ox \bf_2, \be_1\ox K_1\inv]}{q-q\inv}=\be_{12}\ox q^{\half}K_1\inv \bf_2,$$ and the fact that $1\ox \be_1$ commute with $\be_2\ox \bf_2$.
Then we have by \eqref{genpentagon} again
$$(\be_1\ox K_1\inv+\be_{12}\ox q^{\half}K_1\inv \bf_2+1\ox \be_1)g_b(\be_{12}\ox \bf_{12})=g_b(\be_{12}\ox \bf_{12})(\be_1\ox K_1\inv+1\ox \be_1),$$
where we used 
$\be_1\bf_{!2}=\bf_{12}\be_1+q^{\half}(q-q\inv) K_1\inv\bf_2$
such that
$$\frac{[1\ox \be_1, \be_{12}\ox \bf_{12}]}{q-q\inv}=\be_{12}\ox q^{\half}K_1\inv \bf_2.$$
Finally, by Lemma \ref{rank1prop},
$$(\be_1\ox K_1\inv+1\ox \be_1)g_b(\be_1\ox \bf_1)=g_b(\be_1\ox \bf_1)(\be_1\ox K_1+1\ox \be_1),$$
and
\Eqn{
(\be_1\ox K_1+1\ox \be_1)\bQ^\half=&\bQ^\half(\be_1\ox K_1^{\half}+K_1^{-\half}\ox \be_1)\\
=&\bQ^\half\D(E_1).
}

Recall the expression of $R$ given by \eqref{wR}. What we have shown is that (by abuse of notation, write $w_i:=w_i\ox w_i$):
\Eqn{
W_0 w_1\inv R_1w_2\inv R_2w_1\inv R_1\D(E_1)=&\D'(E_1)W_0 w_1\inv R_1w_2\inv R_2w_1\inv R_1,\\
W_0 w_2\inv R_2w_1\inv R_1w_2\inv R_2\D(E_1)=&\D'(E_1)W_0 w_2\inv R_2w_1\inv R_1w_2\inv R_2,
}
or simplifying:
\Eq{w_1\inv R_1w_2\inv R_2\D(E_1)=&\D(q^{\half}E_2K_2^{\half})w_1\inv R_1w_2\inv R_2,\\
w_1\inv R_1w_2\inv R_2\D(F_1)=&\D(q^{\half}F_2K_2^{-\half}) w_1\inv R_1w_2\inv R_2,
}
and also
\Eq{w_1\inv R_1\D(E_1)=\D'(F_1)w_1\inv R_1.}
Apply this repeatedly, we can show the braiding relation for all other simply-laced type.

\subsection{Quasi-triangularity relations in simply-laced case}\label{sec:R:qt}
Again let's work with $U_{q\til{q}}(\sl(3,\R))$. We will prove the first relation $$(\D\ox 1)R=R_{13}R_{23},$$ the second one is similar. We have
{
\allowdisplaybreaks
\Eqn{
(\D\ox 1)R=&\D(\bQ^\half)(g_b(\D\be_2\ox \bf_2)g_b(\D\be_{12}\ox\bf_{12})g_b(\D\be_1\ox\bf_1)\D(\bQ^\half)\\
=&\bQ_{13}^\half \bQ_{23}^\half g_b(\be_2\ox K_2^{\half}\ox \bf_2+K_2^{-\half}\ox \be_2\ox \bf_2)\\
&\cdot g_b(\be_{12}\ox K_1^{\half}K_2^{\half}\ox \bf_{12}+K_2^{-\half}\be_1\ox K_1^{\half}\be_2\ox \bf_{12}+K_1^{-\half}K_2^{-\half}\ox \be_{12}\ox \bf_{12})\\
&\cdot g_b(\be_1\ox K_1^{\half}\ox \bf_1+K_1^{-\half}\ox \be_1\ox \bf_1)\bQ_{13}^\half \bQ_{23}^\half \\
=&\bQ_{13}^\half \bQ_{23}^\half g_b(\be_2\ox K_2^{\half}\ox \bf_2)g_b(K_2^{-\half}\ox \be_2\ox \bf_2)\\
&\cdot g_b(\be_{12}\ox K_1^{\half}K_2^{\half}\ox \bf_{12})g_b(K_2^{-\half}\be_1\ox K_1^{\half}\be_2\ox \bf_{12})g_b(K_1^{-\half}K_2^{-\half}\ox \be_{12}\ox \bf_{12})\\
&\cdot g_b(\be_1\ox K_1^{\half}\ox \bf_1)g_b(K_1^{-\half}\ox \be_1\ox \bf_1)\bQ_{13}^\half \bQ_{23}^\half \\
=&\bQ_{13}^\half \bQ_{23}^\half g_b(\be_2\ox K_2^{\half}\ox \bf_2)g_b(\be_{12}\ox K_1^{\half}K_2^{\half}\ox \bf_{12})\\
&\cdot g_b(K_2^{-\half}\ox \be_2\ox \bf_2)g_b(K_2^{-\half}\be_1\ox K_1^{\half}\be_2\ox \bf_{12})g_b(\be_1\ox K_1^{\half}\ox \bf_1)\\
&\cdot g_b(K_1^{-\half}K_2^{-\half}\ox \be_{12}\ox \bf_{12})g_b(K_1^{-\half}\ox \be_1\ox \bf_1)\bQ_{13}^\half \bQ_{23}^\half \\
=&\bQ_{13}^\half \bQ_{23}^\half g_b(\be_2\ox K_2^{\half}\ox \bf_2)g_b(\be_{12}\ox K_1^{\half}K_2^{\half}\ox \bf_{12})\\
&\cdot g_b(\be_1\ox K_1^{\half}\ox \bf_1)g_b(K_2^{-\half}\ox \be_2\ox \bf_2)\\
&\cdot g_b(K_1^{-\half}K_2^{-\half}\ox \be_{12}\ox \bf_{12})g_b(K_1^{-\half}\ox \be_1\ox \bf_1)\bQ_{13}^\half \bQ_{23}^\half \\
=&\bQ_{13}^\half g_b(\be_2\ox 1\ox \bf_2)g_b(\be_{12}\ox 1\ox \bf_{12})g_b(\be_1\ox 1\ox \bf_1)\bQ_{13}^\half \\
&\cdot \bQ_{23}^\half g_b(1\ox \be_2\ox \bf_2)g_b(1\ox \be_{12}\ox \bf_{12})g_b(1\ox \be_1\ox \bf_1)\bQ_{23}^\half \\
=&R_{13}R_{23}.
}
}
Where in the fourth line we used
$$\frac{[K_2^{-\half}\ox \be_2\ox \bf_2, \be_1\ox K_1^{\half}\ox \bf_1]}{q-q\inv}=K_2^{-\half}\be_1\ox K_1^{\half}\be_2\ox \bf_{12}.$$

By the relations $R_{12}=w_1 R_2 w_1\inv$, the above calculation is also equivalent to the following relations of the quantum dilogarithms:
\Eqn{
&g_b(K_2^{-\half}\be_1\ox K_1^{\half}\be_2\ox \bf_{12})\\
=& g_b^*(K_2^{-\half}\ox \be_2\ox \bf_2)g_b(\be_1\ox K_1^{\half}\ox \bf_1)g_b(K_2^{-\half}\ox \be_2\ox \bf_2)g_b^*(\be_1\ox K_1^{\half}\ox \bf_1)\\
=&g_b^*(\be_{12}\ox K_1^{\half}K_2^{\half}\ox \bf_{12})g_b(K_2^{\half}\bf_2\ox K_2^{-\half}\be_2\ox 1)g_b(\be_{12}\ox K_1^{\half}K_2^{\half}\ox \bf_{12})g_b^*(K_2^{\half}\bf_2\ox K_2^{-\half}\be_2\ox 1),
}
or after rewriting,
$ g_b(K_2^{-\half}\ox \be_2\ox \bf_2)g_b(K_2^{\half}\bf_2\ox K_2^{-\half}\be_2\ox 1)$ commute with \\$g_b(\be_{12}\ox K_1^{\half}K_2^{\half}\ox \bf_{12})g_b(\be_1\ox K_1^{\half}\ox \bf_1)$.
Symbolically, we present this relation informally as \Eq{G_{23}^2G_{21}^{'2}G_{13}^{12}G_{13}^1=G_{13}^{12}G_{13}^1G_{23}^2G_{21}^{'2},}
which resembles the so-called ``Tetrahedron Equation" \cite{KV}. It suffices to apply this relation, together with \eqref{wR} repeatedly to obtain the quasi-triangular relation in higher rank.

\subsection{Remarks on the non-simply-laced case}\label{sec:R:BCFG}
The relations for the non-simply-laced case can also be done along the same line. What we have found is that the braiding relations amount to generalized pentagon relation for $g_b$ given by Proposition \ref{UVcd}, and the same relations apply to all higher rank case. 

On the other hand, the quasi-triangularity is more difficult. For type $B_2$, it is equivalent to the generalized exponential relation given in Proposition \ref{genexp}, which is needed to break down the coproduct of $\be_{21}$ and $\be_{12}$. For simplicity, let
\Eq{\be_3':=\be_{121}=\be_{2\inv 1}=\frac{q^\half \be_2\be_1-q^{-\half} \be_1\be_2}{q-q\inv},}
\Eq{\be_X:=\be_{12}=\frac{\be_3'\be_1-\be_1\be_3'}{q^\half-q^{-\half}}.}
(Recall $q=e^{\pi \bi b_l^2}=q_2$ and $q^\half=e^{\pi \bi b_s^2}=q_1$.)
Then $R$ is given by
\Eq{R=\bQ^\half g_b(\be_2\ox \bf_2)g_b(\be_3'\ox \bf_3')g_b(\be_X\ox \bf_X)g_b(\be_1\ox \bf_1)\bQ^\half.}
Proposition \ref{genexp} then implies
\Eqn{g_{b_s}(\D(\be_3')\ox \bf_3')=&g_{b_s}(\be_3'\ox K_3^\half\ox \bf_3')g_{b_l}(\be_X K_2^{-\half}\ox \be_2K_X^{\half}\ox {\bf_3'}^{2})\\
&\tab \cdot g_{b_s}(\be_1K_2^{-\half}\ox \be_2K_1^\half\ox \bf_3')g_{b_s}(K_3^{-\half}\ox \be_3'\ox \bf_3')}
\Eqn{g_{b_l}(\D(\be_X)\ox \bf_X)=&g_{b_l}(\be_X\ox K_X^{\half}\ox \bf_X)g_{b_l}(\be_1^2K_2^{-\half}\ox \be_2K_1\ox \bf_X),\\
&\tab \cdot g_{b_s}(\be_1K_3^{-\half}\ox \be_3'K_1^\half\ox \bf_X)g_{b_l}(K_X^{-\half}\ox \be_X\ox \bf_X),}
and together with the generalized pentagon relation the quasi-triangularity can be proved. Again these can be rephrased as a generalized Tetrahedron equation using the quantum Weyl element, and we believe these are all we need to prove the higher rank case. 

\section{$\cU_{q\til{q}}(\g_\R)$ as a quasi-triangular multiplier Hopf algebra}\label{sec:Cstar}
So far we have worked on the algebraic calculation quite formally. From the explicit expression of the $R$ operator in Theorem \ref{mainR}, it motivates us to define $R$ as the canonical element of certain Drinfeld's double construction. The accurate language to use here turns out to be the so-called multiplier Hopf algebra \cite{VD} and its Drinfeld's double construction \cite{DvD}, which gives the notion of a quasi-triangular multiplier Hopf algebra introduced by \cite{Zh}.

Let us recall the basic definitions. For further details please refer to \cite{VD}.
\begin{Def}
Let $\cB(\cH)$ be the algebra of bounded linear operators on a Hilbert space $\cH$. Then the multiplier algebra $M(\cA)$ of a $C^*$-algebra $\cA\subset \cB(\cH)$ is the $C^*$-algebra of operators
\Eq{M(\cA)=\{b\in \cB(\cH): b\cA\subset \cA, \cA b\subset \cA\}.}
In particular, $\cA$ is an ideal of $M(\cA)$.
\end{Def}
\begin{Def}\label{mHa}
A multiplier Hopf *-algebra is a $C^*$-algebra $\cA$ together with the antipode $S$, the counit $\e$, and the coproduct map
\Eq{\D:\cA\to M(\cA\ox \cA),}
all of which can be extended to a map from $M(\cA)$, such that the usual properties of a Hopf algebra holds on the level of $M(\cA)$.
\end{Def}
\begin{Def}\label{qtmHa}
A quasi-triangular multiplier Hopf algebra is a multiplier Hopf algebra $\cA$ together with an invertible element $R\in M(\cA\ox \cA)$ such that 
\Eq{(\D \ox id)(R)=&R_{13}R_{23}\in M(\cA\ox \cA\ox \cA),\\
(id\ox\D)(R)=&R_{13}R_{12} \in M(\cA\ox \cA\ox \cA),\\
\D'(a)R=&R\D(a)\in M(\cA\ox \cA),\tab \forall a\in M(\cA),\\
(\e\ox id)(R)=&(id\ox \e)(R)=1\in M(\cA).
}
\end{Def}
Furthermore, the element $u:=m^{op}(1\ox S)(R)$ will be an invertible element in $M(\cA)$ such that 
\Eq{S^2(a)=uau\inv,\tab \forall a\in M(\cA).} 
\begin{Def}\label{rmHa}
A ribbon multiplier Hopf algebra is a quasi-triangular multiplier Hopf algebra $\cA$ that possesses a central ribbon element $v\in M(\cA)$, such that
\Eq{v^2=&uS(u), \tab S(v)=v,\tab \e(v)=1,\\
\D(v)=&(R_{21}R_{12})\inv (v\ox v)
}
hold in $M(\cA)$.
\end{Def}

\subsection{The Borel subalgebra $\cU_{q\til{q}}^{C^*}(\fb_\R)$}\label{sec:Cstar:borel}
Let us fix a positive representation $\cP_\l$ of $\cU_{q\til{q}}(\g_\R)$, and fix a reduced expression $w_0=s_{i_1}...s_{i_N}$ of the longest element. Motivated from the compact case, as well as the expression of $R$, it is intuitive to choose a ``basis" given by
\Eq{\prod_i H_i^{m_i}\prod_{k=1}^N \be_{\a_k}^{\bi b_{i_k}\inv t}=H_1^{m_1}...H_n^{m_n}\be_{\a_N}^{\bi b_{i_N}\inv t}...\be_{\a_1}^{\bi b_{i_1}\inv t}.}
Here $N=l(w_0)$, while $n=rank(\g)$, and as before
\Eq{\be_{\a_k}:=T_{i_1} T_{i_2}... T_{i_{k-1}} \be_{i_k}.}

Following the approach in \cite{Ip1} for the harmonic analysis of the quantum plane, we give the following definition.
\begin{Def} \label{Ub}We define the $C^*$-algebraic version of the Borel subalgebra $$\bU\bb:=\cU_{q\til{q}}^{C^*}(\fb_\R^+)$$ as the operator norm closure of the linear span of all bounded operators on $L^2(\R^N)$ of the form
\Eq{\vec[F]:= F_0(\bH)\prod_{k=1}^N  \int_C \frac{F_k(t_k)}{G_{b_{i_k}}(Q_{i_k}+\bi t_k)} \over[\be]_{\a_k}^{\bi b_{i_k}\inv t_k}dt_k,}
where 
\Eq{\be_{\a_k}=T_{i_1} T_{i_2}... T_{i_{k-1}} \be_{i_k},\tab \over[\be]_{\a_k}=q_{i_k}^{-\half} K_{\a_k}^\half\be_{\a_k}}
\Eq{F_0(\bH):=F_0(\bi b_1H_1,...,\bi b_nH_b)}
is a smooth compactly supported functions on the positive operators $\bi b_k H_k$,
$F_k(t_k)$ are entire analytic functions that have rapid decay along the real direction (i.e. for fixed $y_0$, $F_k(x+\bi y_0)$ decays faster than any exponential function in $x$.). Finally the contour $C$ is along the real axis which goes above the pole at $t_k=0$.
\end{Def}
Since $\over[\be]_{\a_k}$ are positive essentially self-adjoint, $\over[\be]_{\a_k}^{\bi b_{i_k}\inv t}$ is unitary, and by the decay properties of $F_k$, the operator $\vec[F]$ is indeed a well-defined bounded operator acting on $L^2(\R^N)$. Furthermore, since the positive representations are injective, the definition of this algebra does not depend on the choice of the parameter $\l$. Finally by Proposition \ref{interchange} below, the usual complex conjugation gives the star structure of $\bU\bb$.

\begin{Rem} Definition \ref{Ub} is compatible with the modular double counterpart. In other words, we obtain the same algebra when we replace all variables $\be_{\a_k}, \bi b_iH_i$ with $\til{\be}_{\a_k}, \bi b_i\inv H_i$ due to the transcendental relations. Hence $\bU\bb$ can indeed be called the modular double of the Borel subalgebra.
\end{Rem}

\begin{Prop} The map defined by
\Eq{\D(\vec[F])=F_0(\D(\bH))\prod_{k=1}^N  \int_C \frac{F_k(t_k)}{G_{b_{i_k}}(Q_{i_k}+\bi t_k)} \D(\over[\be]_{\a_k})^{\bi b_{i_k}\inv t_k}dt_k}
is a coproduct $\D:\cA\to M(\cA\ox \cA)$, where $\D(H_i)=H_i\ox 1+1\ox H_i$.
\end{Prop}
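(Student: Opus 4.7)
The plan is to verify three things: first, that $\D(\vec[F])$ defines a well-defined bounded operator on $L^2(\R^N)\ox L^2(\R^N)$; second, that its left and right multiplication preserves $\bU\bb\ox\bU\bb$, placing it in $M(\bU\bb\ox\bU\bb)$; and third, that the resulting map is a coassociative $*$-homomorphism. The prefactor $F_0(\D(\bH))$ is immediate: since $\D(H_i)=H_i\ox 1+1\ox H_i$ is a sum of commuting essentially self-adjoint operators, joint functional calculus produces a bounded operator for any smooth compactly supported $F_0$, which obviously lies in $M(\bU\bb\ox\bU\bb)$.

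The substance of the proof lies in the $\be$-factors. The first step is to compute $\D(\over[\be]_{\a_k})$ explicitly. For simple roots one checks $\D(\over[\be]_i)=\over[\be]_i\ox K_i+1\ox\over[\be]_i$, a sum of two positive operators satisfying $(\over[\be]_i\ox K_i)(1\ox\over[\be]_i)=q_i^2(1\ox\over[\be]_i)(\over[\be]_i\ox K_i)$, so Lemma \ref{qbinom} yields
\[
\D(\over[\be]_i)^{\bi b_i\inv t}=\int_C\veca{\bi t\\\bi\tau}_{b_i}\left(\over[\be]_i^{\bi b_i\inv(t-\tau)}\ox K_i^{\bi b_i\inv(t-\tau)}\right)\!\left(1\ox\over[\be]_i^{\bi b_i\inv\tau}\right)d\tau.
\]
For non-simple $\a_k$, one applies the Lusztig isomorphism $T_{i_1}\cdots T_{i_{k-1}}$, implemented on both legs by conjugation with the unitary Weyl element of Section \ref{sec:QWeyl:Weyl}, together with the intertwiner $\D(w_i)=R_i\inv(w_i\ox w_i)$ used in Section \ref{sec:R}, to express $\D(\over[\be]_{\a_k})$ as a finite sum of pairwise $q_{i_k}^2$-commuting positive essentially self-adjoint operators, each of which is a tensor monomial in powers of simple root vectors and Cartans. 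Iterated application of Lemma \ref{qbinom} then expands $\D(\over[\be]_{\a_k})^{\bi b_{i_k}\inv t_k}$ as an iterated contour integral of such monomials weighted by $q$-beta functions.

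With these explicit expansions, boundedness of $\D(\vec[F])$ follows from the usual decay estimates: the $q$-beta factors have at worst Gaussian growth along imaginary directions by Proposition \ref{asymp}, which is absorbed by the rapid decay of the $F_k$ and by the factors $1/G_{b_{i_k}}(Q_{i_k}+\bi t_k)$ after suitable contour shifts inside their strips of analyticity. The multiplier property is then immediate, since each term in the expansion is manifestly of the shape $\vec[F']\ox\vec[F'']$ (multiplied by functions of the Cartans) and therefore multiplies $\bU\bb\ox\bU\bb$ into itself. Finally, $\D$ is a $*$-homomorphism on the generators $\be_i,K_i$ by construction, and both sides of the multiplicativity and coassociativity identities depend analytically on the integration variables and agree at the integer values $t_k\in-\bi b_{i_k}\Z_{\geq 0}$, so the identities extend to all of $\bU\bb$.

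The main obstacle, I expect, is showing that $\D(\over[\be]_{\a_k})$ decomposes into a sum of \emph{pairwise} $q_{i_k}^2$-commuting \emph{positive} summands in an ordering compatible with Lemma \ref{qbinom}. This is a combinatorial statement about how the coproduct interacts with Lusztig's isomorphism, analogous in the split real setting to the triangular decomposition of $\D(E_\a)$ in the compact case. Positivity (rather than merely self-adjointness) relies crucially on the normalizations in Definition \ref{Ub} and on Corollary \ref{Tpos}, while the pairwise commutation relations must be established term by term via the rank-two calculations of Section \ref{sec:R:qt} and then propagated to higher rank through the factorization $R=(W_0\ox W_0)\prod_j(w_{i_j}\ox w_{i_j})\inv R_{i_j}$.
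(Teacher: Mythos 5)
Your overall strategy --- decompose $\D(\over[\be]_{\a_k})$ into positive summands, expand $\D(\over[\be]_{\a_k})^{\bi b_{i_k}\inv t_k}$ by the $q$-binomial theorem, and check that the factor $G_{b_{i_k}}(Q_{i_k}+\bi t_k)$ in the denominator is cancelled and reappears once on each tensor leg so that $\D(\vec[F])$ is again of the form $\vec[F']\ox\vec[F'']$ --- is exactly the paper's argument, and for simple roots and for the simply-laced case your treatment is sound (the computation $\D(\over[\be]_i)=\over[\be]_i\ox K_i+1\ox\over[\be]_i$ with $q_i^2$-commuting summands is correct, and your analytic continuation from the integer powers $t_k\in-\bi b_{i_k}\Z_{\geq 0}$ is a reasonable way to make precise the coassociativity that the paper dismisses as formal).

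The gap is in the step you yourself flag as the main obstacle. It is \emph{not} true in general that $\D(\over[\be]_{\a_k})$ decomposes into pairwise $q_{i_k}^2$-commuting positive summands, so iterated application of Lemma \ref{qbinom} with the single deformation parameter $b_{i_k}$ does not suffice. Already in type $B_2$ the summands of the coproduct of a non-simple root vector satisfy the mixed relations of Proposition \ref{UVcd} ($Uc=q^4cU$, a $q$-commutator $c$ appearing as a new positive term, etc.), and the correct expansion is the generalized exponential relation of Proposition \ref{genexp}, e.g.\ $g_{b_l}(U+c+d)=g_{b_l}(U)\,g_{b_s}(c)\,g_{b_l}(d)$, in which a $b_s$-dilogarithm sits between two $b_l$-dilogarithms; the corresponding integral expansion is therefore not an iterated $q_{i_k}$-binomial but its mixed $(b_s,b_l)$ analogue. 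This is precisely the ingredient the paper's proof invokes for the non-simply-laced case, and without it your verification that $\D(\vec[F])\in M(\bU\bb\ox\bU\bb)$ breaks down outside type $ADE$. Two smaller points: (i) for non-simple roots the expansion produces an extra internal integration (cf.\ Proposition \ref{interchange}), and one must check that the resulting functions on each tensor leg still satisfy the holomorphy and decay conditions of Definition \ref{Ub} --- the paper does this via the meromorphy of $G_b$ together with the delta-distribution rules of Proposition \ref{delta}, whereas your contour-shift remark only addresses boundedness, not membership in the algebra; (ii) your route to the decomposition of $\D(\over[\be]_{\a_k})$ through $\D(w_i)=R_i\inv(w_i\ox w_i)$ is workable but considerably heavier than the paper's direct computation of the coproducts of the $q$-commutators, and it does not by itself produce the commutation relations among the summands that your argument needs.
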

\begin{proof}
Coassociativity is immediate since the expression is the same as in the usual case. The factors $G_{b_{i_k}}(Q_{i_k}+\bi t_k)$ are needed in order to define the coproduct as a multiplier Hopf algebra. This follows from the use of the $q$-binomial formula (Lemma \ref{qbinom}), or in the non-simply-laced case, the generalized exponential relation (Proposition \ref{genexp}), which basically says that $\D(\over[\be]_{\a_k}^{\bi b_{i_k}\inv t_k})=\D(\over[\be]_{\a_k})^{\bi b_{i_k}\inv t_k}$ cancels the factors $G_{b_i}(Q_{i_k}+\bi t_k)$ and introduce two new factors $G_{b_i}(Q_{i_k}+i\t_1)G_{b_i}(Q_{i_k}+i\t_2)$ in the respective factors for $\over[\be]_{\a_k}^{\bi b_{i_k}\inv \t_1}\ox \over[\be]_{\a_k}^{\bi b_{i_k}\inv \t_2}$. For non-simple roots, the extra integration can be shown to be holomorphic due to meromorphicity of $G_b$ as well as application of the delta distribution rules (Proposition \ref{delta}).
\end{proof}

For the term $\prod_{k=1}^N \over[\be]_{\a_k}^{\bi b_{i_k}\inv t}$ to deserve to be called a ``basis", it suffices to show that we can interchange the order of the generators. Only the rank=2 cases need to be considered, we show this for the non-bar generators in the simply-laced case as follows.
\begin{Prop}\label{interchange} In type $A_2$, we have
\Eq{\label{121a}\frac{\be_2^{\bi b\inv t}\be_1^{\bi b\inv s}}{G_b(Q+\bi s)G_b(Q+\bi t)}=\int_{C}\frac{e^{\pi \bi (2s\t+t\t-st-\frac{5}{2}\t^2)}\be_1^{\bi b\inv(s-\t)}\be_{21}^{\bi b\inv \t}\be_2^{\bi b\inv (t-\t)}}{G_b(Q+\bi s-\bi \t)G_b(Q+\bi \t)G_b(Q+\bi t-\bi \t)}d\t,}
where the contour separate the poles of $\t=0$ and $\t=s,t$.
\Eq{\label{121b}\frac{\be_{12}^{\bi b\inv t}}{G_b(Q+\bi t)}=\int_{C}\frac{e^{\pi Q(\t-t)+\pi \bi \t t-\frac{3}{2}\pi \bi \t^2}\be_1^{\bi b\inv \t}\be_{21}^{\bi b\inv(t-\t)}\be_2^{\bi b\inv \t}}{G_b(Q+\bi t-\bi \t)G_b(Q+\bi \t)} d\t,}
where the contour separate the poles of $\t=0$ and $\t=t$.
\end{Prop}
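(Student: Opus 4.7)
The plan is to view both identities as ``$q$-binomial'' reordering formulas, obtained by analytic continuation from integer-power expansions of monomials in the Lusztig basis. The key preliminary step is to extract, from the quantum Serre relations together with the definition
$\be_{21}=\frac{q^{\half}\be_1\be_2-q^{-\half}\be_2\be_1}{q-q\inv}$
of Proposition \ref{eij}, the full $q$-Heisenberg commutation structure of the triple $(\be_1,\be_{21},\be_2)$:
\begin{align*}
\be_2\be_1&=q\,\be_1\be_2-(q^{3/2}-q^{-1/2})\be_{21},\\
\be_1\be_{21}=q\,\be_{21}\be_1,\qquad \be_{21}\be_2&=q\,\be_2\be_{21}.
\end{align*}
Thus $\be_{21}$ $q$-commutes with each simple generator and arises as their mutual ``$q$-commutator'', mirroring the hypothesis of Proposition \ref{genpenta} up to an overall rescaling.

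For (121a), I would first establish by induction on $(m,n)\in\Z_{\geq0}^{2}$ the expansion
$\be_2^{m}\be_1^{n}=\sum_{k=0}^{\min(m,n)}C_{m,n,k}(q)\,\be_1^{n-k}\be_{21}^{k}\be_2^{m-k}$,
where the coefficients $C_{m,n,k}(q)$ are explicit products of $q$-factorials $[\cdot]_{q}!$ multiplied by a quadratic-in-$k$ power of $q$ tracking the commutations. Each $[k]_q!$ is then rewritten in terms of $G_b$ via the functional equation \eqref{funceq}, and the sum over $k$ is analytically continued to an integral over $\tau$ along the standard Mellin-Barnes contour, substituting $m,n\mapsto \bi b\inv t,\bi b\inv s$. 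The three $G_b$-factors in the denominator of (121a) arise precisely from the three $q$-factorials $[n-k]_q!,[k]_q!,[m-k]_q!$, while the quadratic Gaussian $e^{\pi\bi(2s\t+t\t-st-\frac{5}{2}\t^{2})}$ emerges from the accumulated $q$-powers together with the sign $\bi^{2}=-1$. The contour is forced to separate the pole at $\t=0$ (from $G_b(Q+\bi\t)\inv$) from the poles at $\t=s,t$ (from the shifted $G_b$'s), matching the prescription in the statement.

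For (121b), the strategy is parallel: expand $\be_{12}^{n}$ for integer $n$ in the ordered basis $\be_1^{\bullet}\be_{21}^{\bullet}\be_2^{\bullet}$ using the identity $\be_{12}=\frac{q^{\half}\be_2\be_1-q^{-\half}\be_1\be_2}{q-q\inv}$ and the relations above, then analytically continue. An alternative derivation proceeds by Fourier inversion (via Lemma \ref{FT}) of the generalized pentagon relation \eqref{genpentagon} applied to the pair $(\be_1,\be_2)$, which in integrated form produces exactly the three-factor decomposition on the right of (121b). The main technical obstacle in both cases is the analytic continuation: one must verify absolute convergence of the contour integral from the asymptotic behavior of $G_b$, justify the contour deformations, and confirm that specialization at $\bi t=-nb$, $\bi s=-mb$ via the residue calculus of Proposition \ref{delta} recovers the integer-power expansion produced by the induction. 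Once both sides are shown to be meromorphic in $(s,t,\t)$ with the same principal parts coming from the $G_b$-factors, the identity follows by uniqueness of analytic continuation.
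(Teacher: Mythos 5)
Your preliminary commutation relations for the triple $(\be_1,\be_{21},\be_2)$ are correct, and the ``alternative derivation'' you mention in passing is in fact the route the paper takes: it applies the generalized pentagon relation \eqref{genpentagon} to the \emph{$K$-dressed} generators, namely
$g_b(q^{-\half}K_2^{\half}\be_2)\,g_b(q^{\half}K_1^{-\half}\be_1)=g_b(q^{\half}K_1^{-\half}\be_1)\,g_b(K_1^{-\half}K_2^{\half}\be_{21})\,g_b(q^{-\half}K_2^{\half}\be_2)$,
expands each $g_b$ by the Fourier formula of Lemma \ref{FT}, and then \emph{equates the powers of $K_1$ and $K_2$} on both sides to extract \eqref{121a}; a second application (in the form $g_b^*\,g_b\,g_b\,g_b^*=g_b(c)$) together with \eqref{121a} yields \eqref{121b}. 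Your sketch omits the two ingredients that make this work. First, the dressing by $K_i^{\pm\half}$ is not cosmetic: for the bare pair $(\be_1,\be_2)$ the operator $\frac{\be_1\be_2-\be_2\be_1}{q-q\inv}$ is not a multiple of $\be_{21}$, and $\be_{21}$ only $q$-commutes with $\be_1,\be_2$, whereas Proposition \ref{genpenta} requires $Uc=q^2cU$, $Vc=q^{-2}cV$; it is precisely the dressed operators that satisfy these hypotheses. Second, without tracking the $K_1,K_2$ bidegree one only obtains an equality of iterated integrals over non-independent monomials $\be_1^{\bullet}\be_{21}^{\bullet}\be_2^{\bullet}$, from which the pointwise identity of integrands asserted in \eqref{121a}--\eqref{121b} cannot be isolated.

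The primary route you propose --- induction on integer powers followed by ``uniqueness of analytic continuation'' --- has a genuine gap at the continuation step. What Proposition \ref{delta} recovers is agreement of the two sides on the discrete lattice $s=-\bi mb$, $t=-\bi nb$ with $m,n\in\Z_{\geq0}$, and a discrete set is not a set of uniqueness for meromorphic functions: to conclude equality for all $(s,t)$ you would need a Carlson-type growth bound in a half-plane, which you do not supply and which is delicate here because the identity is operator-valued (it must first be paired against a dense family of vectors before either side is even a scalar meromorphic function of $(s,t)$). This is exactly why the paper, like its Lemma \ref{qbinom}, works directly with the complex powers via functional calculus and the integral representation of $g_b$ rather than continuing from the integer-power expansion. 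To salvage your route you would have to prove the required growth estimates; otherwise the argument should be replaced by the $g_b$-expansion described above.
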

Note that by taking $s,t\to -\bi b$, one recovers the standard relation
\Eq{\be_2\be_1=&q\be_1\be_2-(q-q\inv)q^{\frac{1}{2}}\be_{21},\\
\be_{12}=&q^{\frac{1}{2}}\be_1\be_2-q\be_{21},
}
 by means of Proposition \ref{delta}. Also, the factors $G_b(Q+\bi t)$ implies that the holomorphicity condition for $\bU\bb$ is still satisfied.
\begin{proof}
By the generalized pentagon relation \eqref{genpentagon}, we have
$$g_b(q^{-\half} K_2^\half \be_2)g_b(q^\half K_1^{-\half}\be_1)=g_b(q^\half K_1^{-\half}\be_1)g_b(K_1^{-\half}K_2^{\half}\be_{21})g_b(q^{-\half}K_2^\half \be_2).$$
Now expand the relation using Lemma \ref{FT}, and equate the powers of $K_1$ and $K_2$ we will obtain \eqref{121a}.

Next, using again the generalized pentagon relation again, written as
 $$g_b^*(q^\half K_2^{-\half}\be_2)g_b(q^{-\half} K_1^\half \be_1)g_b(q^\half K_2^{-\half}\be_2)g_b^*(q^{-\half}K_1^\half \be_1)=g_b(K_2^{-\half}K_1^{\half}\be_{12}),$$
expanding by Lemma \ref{FT} and equating again the powers of $K_1$ and $K_2$, and using the first equation to interchange $\be_1$ and $\be_2$, the integral can be evaluated explicitly and we obtain \eqref{121b}. 
\end{proof}

The interchange relation in the non-simply-laced case can be obtained along the same line by combining Proposition \ref{UVcd} and \ref{genexp}.

As a collorary, we can now define the antipode.
\begin{Def} The antipode is defined on the generators by (cf. \eqref{SS})
\Eq{S(H_i)=&-H_i\\
S(\be_i^{\bi b_i\inv t})=&e^{-\pi Q_it} \be_i}
and extended anti-homomorphically.
\end{Def}
So for example we have
$S(\be_{12}^{\bi b\inv t})=e^{-2\pi Qt}\be_{21}^{\bi b\inv t}$.
\begin{Cor} By the interchange relation from Proposition \ref{interchange}, the antipode is a map from $\bU\bb$ to $\bU\bb$. Furthermore, together with the complex conjugation properties \eqref{Gbcomplex} of $G_b(x)$, $\bU\bb$ also possesses a natural star structure given by complex conjugation, such that the analytic properties are satisfied.
\end{Cor}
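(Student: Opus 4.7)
The plan is to verify closure of $\bU\bb$ under $S$ and under complex conjugation $\ast$ by reducing both to the rank-two interchange identities of Proposition~\ref{interchange} (and their analogues from Propositions~\ref{UVcd} and \ref{genexp} in the non-simply-laced case).

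For the antipode, I would take a generic element
\Eqn{\vec[F]=F_0(\bH)\prod_{k=1}^N\int_C\frac{F_k(t_k)}{G_{b_{i_k}}(Q_{i_k}+\bi t_k)}\over[\be]_{\a_k}^{\bi b_{i_k}\inv t_k}\,dt_k}
and apply $S$ anti-homomorphically: the Cartan part becomes $F_0(-\bH)$, and each $\over[\be]_{\a_k}^{\bi b_{i_k}\inv t_k}$ is sent to its $S$-image times an explicit exponential phase (from $S(\be_i^{\bi b_i\inv t})=e^{-\pi Q_it}\be_i^{\bi b_i\inv t}$ together with $S(K_{\a_k}^{\pm\half})=K_{\a_k}^{\mp\half}$). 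For non-simple $\a_k$, I would expand $\be_{\a_k}$ using its definition through the Lusztig isomorphism as a reduced $q$-commutator polynomial in the simple $\be_i$'s, so that $S(\be_{\a_k}^{\bi b\inv t})$ becomes a finite contour-integral expression in the simple generators in reversed order. Everything then lies in the linear span of monomials in the simple generators taken in the \emph{reverse} of the chosen Weyl-word order; I would apply the interchange relation of Proposition~\ref{interchange} iteratively to nearest-neighbor pairs to restore the canonical ordering, each transposition introducing one new convolution against a Gaussian-exponential kernel divided by two $G_{b_i}$-factors of exactly the form demanded by Definition~\ref{Ub}. Finitely many steps yield $S(\vec[F])\in\bU\bb$.

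For the $\ast$-structure, the generators $\over[\be]_{\a_k}$ and $\bH$ are self-adjoint, so $(\over[\be]_{\a_k}^{\bi b\inv t})^\ast=\over[\be]_{\a_k}^{-\bi b\inv\bar t}$ and $F_0(\bH)^\ast=\overline{F_0}(\bH)$. The Hermitian adjoint of $\vec[F]$ reverses the product and complex-conjugates the scalar integrand. I would substitute $t_k\mapsto-\bar t_k$, which returns the contour $\R+\bi 0$ to itself after a pole-free deformation, and apply \eqref{Gbcomplex} in the form $\overline{G_b(Q+\bi t)}=G_b(-\bi\bar t)\inv$ to rewrite $\overline{F_k(t_k)}/\overline{G_{b_{i_k}}(Q_{i_k}+\bi t_k)}$ as a new admissible integrand $\til{F_k}(t_k)/G_{b_{i_k}}(Q_{i_k}+\bi t_k)$ multiplied by an entire exponential that can be absorbed into $\til{F_k}$. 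The reversed monomial is then put back into canonical form by the same iterated interchange argument as for $S$.

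The main obstacle is analytic rather than algebraic: one must check that every interchange and conjugation-shift step preserves the class of admissible coefficient functions, namely those entire in the integration variable and rapidly decaying along the real direction. This follows from the asymptotic behavior of $G_b$ recalled in Section~\ref{sec:prelim:qdlog}, which forces the kernels in \eqref{121a}--\eqref{121b} to have Gaussian decay in the convolution variable $\tau$ uniformly on horizontal strips in $(s,t)$; convolution with such kernels preserves both entire analyticity and rapid decay, and the contours can always be chosen in a common tubular neighborhood of $\R$ that separates the poles of the two $G_b$-factors. Combining this analytic stability with the anti-homomorphism reversal then yields both $S(\bU\bb)\subset\bU\bb$ and $\bU\bb^\ast\subset\bU\bb$, as desired.
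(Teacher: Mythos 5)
Your proposal is correct and follows essentially the same route as the paper, which gives no separate proof beyond the justification embedded in the corollary's statement: closure under $S$ and under the star operation is reduced to reordering the reversed monomials via Proposition \ref{interchange} (and its non-simply-laced analogues) together with the conjugation identity \eqref{Gbcomplex}, and your analytic check that the Gaussian kernels preserve entirety and rapid decay of the coefficient functions is precisely the point the paper leaves implicit. (One trivial slip: \eqref{Gbcomplex} gives $\overline{G_b(Q+\bi t)}=G_b(\bi \bar t)^{-1}$ rather than $G_b(-\bi\bar t)^{-1}$, but after the substitution $t\mapsto-\bar t$ and an application of the reflection property \eqref{reflection} one still recovers $G_b(Q+\bi t)^{-1}$ times an exponential--Gaussian factor absorbable into $\til{F_k}$, so the argument is unaffected.)
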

Finally, we define
\Eq{\e(\vec[F])=F_0(\zero),}
by setting all $H_i$ to be zero.
\begin{Cor} The $C^*$-algebra $\bU\bb$ is a multiplier Hopf *-algebra in the sense of Definition \ref{mHa}.
\end{Cor}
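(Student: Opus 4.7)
The plan is to verify in turn each of the structural axioms of Definition \ref{mHa}: that $\D$ is a coassociative coproduct, that $\e$ is a counit, that $S$ is an antipode, and that all three are compatible with the $*$-structure, while being careful to track whether each constructed element lives in $\bU\bb$ itself or only in the multiplier algebra $M(\bU\bb)$. The analytic backbone throughout is the interchange relation of Proposition \ref{interchange} (together with its non-simply-laced analogues obtained via Proposition \ref{UVcd} and Proposition \ref{genexp}), which is what rigorously justifies treating the ordered products $\prod_k \over[\be]_{\a_k}^{\bi b_{i_k}\inv t_k}$ as a genuine ``basis''. The point is that any product of such generators in a non-canonical order can be rewritten, via contour integrals, as an element of $\bU\bb$ in canonical form.

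For coassociativity, the identity $(\D\ox id)\D = (id\ox \D)\D$ is inherited from the corresponding classical identity on the generators $H_i$ and $\be_{\a_k}$, which already holds in $\cU_q(\g)$; the factors $G_{b_{i_k}}(Q_{i_k}+\bi t_k)$ are precisely what is needed so that $\D(\over[\be]_{\a_k}^{\bi b_{i_k}\inv t})$ factors as $\D(\over[\be]_{\a_k})^{\bi b_{i_k}\inv t}$ via the $q$-binomial theorem (Lemma \ref{qbinom}) or Proposition \ref{genexp}, after which the classical coassociativity applies at the integrand level. The counit axioms follow by a direct substitution $H_i\mapsto 0$ in the coproduct, using that the root-vector part of $\D(\over[\be]_{\a_k})$ degenerates to $\over[\be]_{\a_k}\ox 1+1\ox\over[\be]_{\a_k}$ and the tau-beta theorem (Lemma \ref{tau}) collapses the resulting integrals. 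The antipode axiom $m(S\ox id)\D = \e\cdot 1 = m(id\ox S)\D$ holds at the level of generators by the definitions of $S$ and $\D$, and extends to products via Proposition \ref{interchange} to reassemble the canonical form. For the $*$-structure, the identity $\D(a^*) = \D(a)^*$, $\e(a^*)=\overline{\e(a)}$ and $S\circ *\circ S\circ * = id$ reduce to unitarity of $\over[\be]_{\a_k}^{\bi b_{i_k}\inv t}$ together with the complex-conjugation identity \eqref{Gbcomplex} for $G_b$.

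The main obstacle is the verification that $\D$ really takes values in $M(\bU\bb\ox\bU\bb)$, i.e.\ that for every $\vec[F]\in\bU\bb$ and $\vec[G]\ox\vec[G']\in \bU\bb\ox\bU\bb$, both $\D(\vec[F])(\vec[G]\ox\vec[G'])$ and $(\vec[G]\ox\vec[G'])\D(\vec[F])$ again belong to $\bU\bb\ox\bU\bb$. Concretely, each coproduct $\D(\over[\be]_{\a_k})^{\bi b_{i_k}\inv t}$ expands by Proposition \ref{genexp} into a sum of products $\over[\be]_\beta^{\bi b\inv \t_1}\ox \over[\be]_\gamma^{\bi b\inv \t_2}$ involving various positive non-simple root vectors, and one then has to interchange these past the factors of $\vec[G]\ox\vec[G']$ using Proposition \ref{interchange} (iterated through the full reduced word) to return to the canonical ordering. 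The analytic cost of each such move is an extra $G_b$-factor in the integrand; convergence of the resulting contour integrals, and holomorphic deformability to the prescribed contour above $t_k=0$, is controlled by the asymptotics of $G_b$ (Proposition \ref{asymp}), the decay hypothesis on the functions $F_k,G_k$, and the delta-distribution identity of Proposition \ref{delta} which handles residue contributions whenever contours are crossed. Once this is in place, all Hopf-algebra identities are formal consequences of their classical counterparts, and the Corollary follows.
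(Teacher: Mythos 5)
Your proposal is correct and follows essentially the same route as the paper, which states this Corollary without a separate proof as a summary of the preceding results: the coproduct landing in $M(\bU\bb\ox\bU\bb)$ via the $q$-binomial theorem, Proposition \ref{genexp} and Proposition \ref{delta}; the antipode and $*$-structure via the interchange relations of Proposition \ref{interchange} and the conjugation identity \eqref{Gbcomplex}; and the counit by evaluation at $H_i=0$. Your write-up simply makes explicit the axiom-by-axiom verification that the paper leaves implicit, using the same key lemmas.
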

\subsection{Hopf pairing and Drinfeld's double}\label{sec:Cstar:pairing}
For two Hopf algebra $\cA,\cA'$, a pairing is called a Hopf pairing if for $a\in \cA,b,c\in \cA'$,
\Eq{\<a,bc\>=\<\D(a),b\ox c\>=\sum \<a^i, b\>\<a_i,c\>,}
\Eq{\<S(a),b\>=\<a,S(b)\>,}
\Eq{\<a,1\>=\e(a), \tab\<1,b\>=\e(b),}
where $\D(a)=\sum a^i\ox a_i$. Moreover, it can be extended naturally to the multiplier algebra $M(\cA)$.
Let $\bU\bb^-$ be the multiplier Hopf algebra generated in the above sense by $\bi b_iW_i$ and $\bf_{\a_k}$ with the opposite coproduct. Then we define the pairing on the generators (we used the modified generator, cf. Remark \ref{bar}) as
\begin{Def} The Hopf pairing is given only on generators with same indices by
\Eq{\left\<(\bi b_iH_i)^n, (\bi b_iW_i)^m\right\>=&\d_{mn}n!\frac{\bi }{\pi},\\
\left\<\over[\be]_{\a_k}^{\bi b_{i_k}\inv s}, \over[\bf]_{\a_k}^{\bi b_{i_k}\inv t}\right\>=&\d(s-t)G_{b_{i_k}}(Q_{i_k}+\bi t)e^{\pi \bi t^2},
}
or more generally, denoting $\vec[F]\in \bU\bb, \vec[F']\in \bU\bb^-$, 
\Eq{\<\vec[F],\vec[F']\>=\<F_0(\bH),F_0'(\bW)\>\prod_{k=1}^N\int \frac{F_{i_k}(t_k)F_{i_k}'(t_k)e^{\pi \bi t_k^2}}{G_{b_{i_k}}(Q_{i_k}+\bi t_k)} dt_k.}
\end{Def}

We will show that the definition is consistent with the Hopf pairing between simple root generators. Using the $q$-binomial theorem (Lemma \ref{qbinom}), we have:
\Eqn{&\<\over[\be]^{\bi b\inv s}, \over[\bf]^{\bi b\inv t}\>=\<\over[\be]^{\bi b\inv s}, \over[\bf]^{\bi b\inv t_1}\over[\bf]^{\bi b\inv t_2}\>\\
&=\<\D(\over[\be]^{\bi b\inv s}), \over[\bf]^{\bi b\inv t_1}\ox\over[\bf]^{\bi b\inv t_2}\>\\
&=\left<\int_C \frac{G_b(-\bi s+\bi \t)G_b(-\bi \t)}{G_b(-\bi s)} \over[\be]^{\bi b\inv \t}\ox K^{\bi b\inv \t}\over[\be]^{\bi b\inv(s-\t)},\over[\bf]^{\bi b\inv t_1}\ox\over[\bf]^{\bi b\inv t_2}\right>\\
&= \int_C\frac{G_b(-\bi s+\bi \t)G_b(-\bi \t)}{G_b(-\bi s)}\d(\t-t_1)e^{\pi \bi t_1^2}G_b(Q+\bi t_1)\d(s-\t-t_2)e^{\pi \bi t_2^2}G_b(Q+\bi t_2)d\t\\
&=\frac{G_b(-\bi t_2)G_b(-\bi t_1)G_b(Q+\bi t_1)G_b(Q+\bi t_2)}{G_b(-\bi s)}e^{\pi \bi t_1^2+\pi \bi t_2^2}\d(s-t_1-t_2)\\
&=e^{\pi \bi s^2}G_b(Q+\bi s)\d(s-(t_1+t_2)).}
The other case is similar. The properties involving antipode is easy to check if we choose the reverse ordering of the basis on $\bU\bb^-$. The properties of $\e$ is trivial.

Now we recall the Drinfeld's double construction in the setting of multiplier Hopf algebra.
\begin{Def} \cite{DvD} The Drinfeld's double $\cD$ of multiplier Hopf algebra $\cA$ and its dual $\cA'$ is a Hopf algebra with underlying vector space $\cA\ox \cA'$, comultiplication $\D_\cA\ox \D_{\cA'}^{op}$, and product given by
\Eq{(a\ox x)(b\ox y)=\sum ab_{(2)}\ox x_{(2)}y\<b_{(1)},S_{\cA'}\inv (x_{(3)})\>\<b_{(3)},x_{(1)}\>.}
\end{Def}
Then it is known \cite{DvD2} that the the Drinfeld's double $\cD$ is a quasi-triangular multiplier Hopf algebra, where $R$ is given by the canonical element, which is the unique element in $M(\cA\ox\cA')\subset M(\cD\ox \cD)$ such that 
\Eq{\<R,b\ox a\>=\<a,b\>,\tab a\in\cA, b\in \cA'.}
\begin{Def} We define 
\Eq{\bU:=\cU_{q\til{q}}^{C^*}(\g_\R):=\cD(\bU\bb)/(W_i=(A\inv)_{ij}H_j)}
 to be the Drinfeld's double of the Borel subalgebra $\bU\bb$ modulo the Cartan subalgebra $\fh\subset \bU\bb^-$.
\end{Def}
\begin{Cor}\label{mainRCor} $\bU$ is a quasi-triangular multiplier Hopf algebra. The canonical element is given precisely by $R$ as in Theorem \ref{mainR}. This follows directly from the explicit expression of $R$ and the Hopf pairing we are using.
\end{Cor}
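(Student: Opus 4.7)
The argument is essentially a verification that unwinds the definitions, so I would organize it as an application of the general Drinfeld double theory for multiplier Hopf algebras \cite{DvD, DvD2} combined with the explicit Hopf pairing set up in Section \ref{sec:Cstar:pairing}. First I would record that $\bU\bb$ has been established as a multiplier Hopf $*$-algebra, and that the map $\vec[F]\mapsto \vec[F']$ defines a nondegenerate Hopf pairing between $\bU\bb$ and $\bU\bb^-$. By the general theorem of \cite{DvD2}, the Drinfeld double $\cD(\bU\bb)$ is then automatically a quasi-triangular multiplier Hopf algebra, and its universal $R$ is characterized as the unique canonical element $R\in M(\bU\bb^-\ox\bU\bb)\subset M(\cD(\bU\bb)\ox\cD(\bU\bb))$ satisfying $\<R, b\ox a\>=\<a,b\>$ for $a\in\bU\bb$ and $b\in\bU\bb^-$. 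The quotient by the relation $W_i=(A\inv)_{ij}H_j$ identifies the two Cartan subalgebras inside $\cD(\bU\bb)$; since $H_i$ and $W_i$ are both primitive and the pairing $\<(\bi b_iH_i)^n,(\bi b_iW_i)^m\>=\d_{mn}n!\bi/\pi$ descends consistently under this identification, the quasi-triangular structure passes to the quotient $\bU$.

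Next I would compute the canonical element directly. Because the Hopf pairing is diagonal, factoring as a product over the Cartan index and over each root index $k$, the canonical element factors as a Cartan part times an ordered product over the root labels $\a_k$. For the root-vector factor at index $k$, the pairing $\<\over[\be]_{\a_k}^{\bi b_{i_k}\inv s},\over[\bf]_{\a_k}^{\bi b_{i_k}\inv t}\>=\d(s-t)G_{b_{i_k}}(Q_{i_k}+\bi t)e^{\pi \bi t^2}$ is exactly inverted by the element
\Eqn{
\int_C \frac{e^{-\pi \bi t^2}}{G_{b_{i_k}}(Q_{i_k}+\bi t)}\,\over[\be]_{\a_k}^{\bi b_{i_k}\inv t}\ox\over[\bf]_{\a_k}^{\bi b_{i_k}\inv t}\,dt,
}
which by Lemma \ref{FT} equals $g_b(\over[\be]_{\a_k}\ox\over[\bf]_{\a_k})$. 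For the Cartan part, the standard generating series $\sum_n \frac{1}{n!}(\bi b_iH_i\ox\bi b_jW_j)^n(\pi/\bi)^n$ is inverse to the prescribed Gaussian pairing, and after substituting $W_j=(A\inv)_{jk}H_k$ it collapses to $\prod_{ij}q_i^{\half(A\inv)_{ij}H_i\ox H_j}$.

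Multiplying these contributions in the ordering dictated by the basis of Definition \ref{Ub} and invoking Remark \ref{bar} to move the trailing Cartan factor through, I would obtain precisely the expression of $R$ from Theorem \ref{mainR}. The braiding and quasi-triangularity identities then become automatic consequences of the multiplier Hopf algebra structure and the defining property of the canonical element, rather than needing the direct verifications carried out in Sections \ref{sec:R:braiding}--\ref{sec:R:qt}.

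The step I expect to be the main obstacle, and which in a fully rigorous write-up must be handled with care, is ensuring that the heuristic ``continuous basis'' of Definition \ref{Ub} actually gives rise to a well-defined nondegenerate pairing of multiplier Hopf algebras, so that the canonical element is unambiguously determined in $M(\bU\bb^-\ox\bU\bb)$. Concretely, one must check that the interchange relations such as Proposition \ref{interchange} (and their non-simply-laced analogues obtained from Propositions \ref{UVcd} and \ref{genexp}) are compatible with the diagonal pairing, so that changing the order of the basis elements rescales both sides of the canonical element identity consistently. Once this is verified, the factored form of $R$ drops out directly as claimed.
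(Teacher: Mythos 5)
Your proposal is correct and follows essentially the same route as the paper, which justifies the corollary in one line by appeal to the Drinfeld double theory of \cite{DvD2} and the explicit diagonal Hopf pairing; your elaboration (the dual-basis weight $e^{-\pi\bi t^2}/G_{b_{i_k}}(Q_{i_k}+\bi t_k)$ reassembling into $g_b(\over[\be]_{\a_k}\ox\over[\bf]_{\a_k})$ via Lemma \ref{FT}, and the Gaussian Cartan factor collapsing under $W_j=(A\inv)_{jk}H_k$) is exactly the computation the paper leaves implicit. The technical caveat you flag about the consistency of the continuous basis is likewise the one the paper handles through Proposition \ref{interchange} and its non-simply-laced analogues.
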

Finally we note that $R$ acts as unitary operator on the positive representations $P_{\l_1}\ox P_{\l_2}$ giving the braiding structure. 

\subsection{The ribbon structure of $\what{\cU}_{q\til{q}}(\g_\R)$}\label{sec:Cstar:ribbon}
In Section \ref{sec:QWeyl:ribbon}, we have computed in the case of $\cU_{q\til{q}}(\sl(2,\R))$ the element \\$u=m^{op}(1\ox S)(R)$ to be
$$u=vK^{\frac{Q}{b}},$$
which is now clear that it lies in the multiplier algebra $M(\cU_{q\til{q}}^{C^*}(\sl(2,\R)))$ in the sense defined in the previous subsection. Let us adjoin the unitary operators $w_1,...,w_n$ defined in \eqref{ww} to the algebra $\bU$, and call this $\what{\cU}_{q\til{q}}(\g_\R)$.
\begin{Prop}
Define $v=W_0^2$ where $W_0=w_{i_1}...w_{i_N}$ with $s_{i_1}...s_{i_N}$ a reduced expression of the longest element. Then $v$ is a ribbon element, making $\what{\cU}_{q\til{q}}(\g_\R)$ a ribbon multiplier Hopf algebra.
\end{Prop}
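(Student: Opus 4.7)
The plan is to verify each of the five defining properties of a ribbon element (Definition \ref{rmHa}) for $v=W_0^2$ in turn, leveraging the rank-one identities from Section \ref{sec:QWeyl:ribbon} ($w_i^2 = v_i$, $S(w_i)w_i = u_i$, $\D(w_i) = R_i^{-1}(w_i\ox w_i)$, $\e(w_i)=1$) together with the global identity $\D(W_0) = R^{-1}(W_0\ox W_0)$ obtained by iterating $\D(w_{i_k})=R_{i_k}^{-1}(w_{i_k}\ox w_{i_k})$ and repackaging the nested product via \eqref{wR}.

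First I would establish centrality of $W_0^2$. Conjugation by $W_0$ realises the Lusztig automorphism $T_{w_0}$ of the longest Weyl element (Theorem \ref{LusThm}); since $w_0^2=e$ in $W$, two applications return every simple generator $\be_j,\bf_j,K_j$ to itself (with the usual identification $\a_j\mapsto -\a_{j^*}$ under $-w_0$), so $W_0^2$ commutes with every generator of $\bU$. The counit relation $\e(v)=1$ is immediate from $\e(w_i)=1$. For $S(v)=v$, two applications of the antipode on each $w_i$ using $S(w_i)=w_iK_i^{Q_i/b_i}$ introduce Cartan factors which, by centrality from the previous step together with the identity $S^2(a)=uau^{-1}$, telescope back to give $S(W_0^2)=W_0^2$.

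The coproduct identity is the real content. From $\D(W_0)=R^{-1}(W_0\ox W_0)$ one computes
\begin{align*}
\D(W_0^2)
&= R^{-1}(W_0\ox W_0)\,R^{-1}(W_0\ox W_0)\\
&= R^{-1}\bigl[(W_0\ox W_0)R^{-1}(W_0\ox W_0)^{-1}\bigr](v\ox v),
\end{align*}
so the question reduces to the conjugation identity $(W_0\ox W_0)R(W_0\ox W_0)^{-1} = R_{21}$. This is what one expects: conjugation by $W_0$ swaps the positive and negative Borel parts ($\be_{\a_k}\leftrightarrow \bf_{\a_k}$ up to scalars) and preserves $\bQ^{1/2}$ (since $W_0$ inverts each $H_i$ and $(A^{-1})_{ij}$ is symmetric). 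By Theorem \ref{mainR} and Remark \ref{bar}, it suffices to verify this rank by rank, where it reduces to $(w\ox w)(\bQ^{1/2}g_b(\be\ox\bf)\bQ^{1/2})(w\ox w)^{-1} = \bQ^{1/2}g_b(\bf\ox\be)\bQ^{1/2}$, a direct computation using $w\be w^{-1}=\bf$, $wKw^{-1}=K^{-1}$, and the self-duality of $g_b$. Granting the lemma, $\D(v)=(R_{21}R_{12})^{-1}(v\ox v)$.

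Finally, for $v^2 = uS(u)$: in rank one, Section \ref{sec:QWeyl:ribbon} gives $u = w^2 K^{Q/b}$ with $S(w)w=u$, hence $uS(u) = w^2 K^{Q/b}\cdot K^{-Q/b}w^2 = w^4 = v^2$. In higher rank, using the root-by-root factorisation of $R$ from Theorem \ref{mainR} and the recursive $w_i$-conjugation performed in Section \ref{sec:R:braiding}, I would write $u = \prod_\a u_\a$ as an ordered product of root contributions, each satisfying the rank-one relation after Lusztig transport, and then telescope together with centrality of each $w_i^2$. The main technical obstacle is the conjugation identity $(W_0\ox W_0)R(W_0\ox W_0)^{-1}=R_{21}$: it is a formal manipulation in the Hopf-algebra heuristic, but in our setting both sides must be interpreted as elements of $M(\bU\ox\bU)$, so the reduction to the rank-one $g_b$-identity and the unitarity of each $w_i$ from Corollary \ref{Tpos} are both essential to make the argument rigorous within the multiplier Hopf algebra framework of Section \ref{sec:Cstar}.
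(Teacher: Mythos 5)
Your proposal follows essentially the same route as the paper, which disposes of the Proposition in one line by appealing to the coproduct identity $\D(w_i)=R_i^{-1}(w_i\ox w_i)$ and the centrality of $W_0^2$ (via $T_{w_0}^2$ acting trivially on the generators); you simply fill in the details the paper leaves implicit, correctly identifying the conjugation identity $(W_0\ox W_0)R(W_0\ox W_0)^{-1}=R_{21}$ as the key step behind $\D(v)=(R_{21}R_{12})^{-1}(v\ox v)$. The argument is sound and consistent with the paper's intent.
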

The properties of $v$ follows directly from the coproduct properties of $w_i$, and the fact that $W_0^2$ commute with all the generators $\be_i, \bf_i, K$, hence $v$ is central. Furthermore, the operator $u$ can now be expressed as
\Eq{u=v\prod_{i=1}^n K_i^{\frac{Q_i}{b_i}},}
which is again clear that it lies in the multiplier $M(\bU)$. 

With the involvement of $Q^2$ in the expression of $v$, this means that there are no classical limit as $b\to 0$, and we believe that this observation opens up a possibility of finding a new class of quantum topological invariants, where the ribbon structure plays a crucial role \cite{RT1, RT2}.


\end{document}